\documentclass[a4paper,11pt]{amsart}
\pdfoutput=1   
\usepackage[utf8]{inputenc}
\usepackage[english]{babel}
\usepackage{indentfirst}
\usepackage{csquotes}
\usepackage{enumerate}

\usepackage{amsmath,amsthm,amssymb,mathrsfs,thmtools}
\usepackage{dsfont}

\usepackage{enumitem}
\usepackage[toc,page]{appendix}

\usepackage{graphicx}
\usepackage{mathtools,array}
\usepackage[all]{xy}
\usepackage{tikz-cd}

\usepackage{fancyhdr,floatpag}
\usepackage{upgreek,color}
\usepackage{hyperref}

\usepackage{cleveref}

\usepackage{blkarray}
\usepackage{multirow}

\usepackage{dutchcal} 

\usepackage[style=numeric,sorting=nyt,maxbibnames=7,maxcitenames=5,block=space,backend=bibtex]{biblatex}
\renewbibmacro{in:}{}

\hypersetup{linkbordercolor={0 0.60 0.75}, linkcolor=MidnightBlue}
\hypersetup{citebordercolor={0 0.60 0.75}, citecolor=MidnightBlue}

\numberwithin{equation}{section}
\theoremstyle{plain}

\newtheorem{thm}{Theorem}[section]
\newtheorem{conj}[thm]{Conjecture}

\newtheorem{cor}[thm]{Corollary}

\newtheorem{lem}[thm]{Lemma}

\newtheorem{pps}[thm]{Proposition}

\theoremstyle{definition}
\newtheorem{dfn}[thm]{Definition}
\newtheorem{prob}[thm]{Problem}
\newtheorem{rmkx}[thm]{Remark}
\newenvironment{rmk}
  {\pushQED{\qed}\renewcommand{\qedsymbol}{$\triangle$}\rmkx}
  {\popQED\endrmkx}
\newtheorem{examplex}[thm]{Example}
\newenvironment{exm}
  {\pushQED{\qed}\renewcommand{\qedsymbol}{$\triangle$}\examplex}
  {\popQED\endexamplex}

\declaretheoremstyle[
  spaceabove=-6pt,
  spacebelow=6pt,
  headfont=\normalfont\bfseries,
  postheadspace=1em,
  qed=\qedsymbol,
  headpunct={}
]{mystyle} 
\declaretheorem[name={proofof},style=mystyle,unnumbered,
]{proofof}  
\makeatletter
\renewenvironment{proofof}[1] {\par\pushQED{\qed}\normalfont\topsep6\p@\@plus6\p@\relax\trivlist  \item[\hskip\labelsep
        \bfseries
    Proof of #1.]\ignorespaces}{\popQED\endtrivlist\@endpefalse}
\makeatother
\makeatletter
\newcommand{\thickhline}{
    \noalign {\ifnum 0=`}\fi \hrule height 1pt
    \futurelet \reserved@a \@xhline
}
\newcolumntype{"}{@{\hskip\tabcolsep\vrule width 1pt\hskip\tabcolsep}}
\makeatother

\makeatletter
\renewenvironment{proof}[1][\proofname] {\par\pushQED{\qed}\normalfont\topsep6\p@\@plus6\p@\relax\trivlist\item[\hskip\labelsep\bfseries#1\@addpunct{.}]\ignorespaces}{\popQED\endtrivlist\@endpefalse}
\makeatother

\makeatletter
\renewcommand{\@secnumfont}{\bfseries}
\makeatother
\usepackage{etoolbox}
\makeatletter
\patchcmd{\section}{\scshape}{\bf}{}{}
\patchcmd{\subsection}{\scshape}{\bf}{}{}
\patchcmd{\subsubsection}{\scshape}{\bf}{}{}
\makeatother

\newcommand{\N}{\mathbb{N}} 
\newcommand{\Z}{\mathbb{Z}} 
\newcommand{\Q}{\mathbb{Q}} 
\newcommand{\R}{\mathbb{R}} 
\newcommand{\K}{\mathbb{K}} 
\newcommand{\F}{\mathbb{F}} 
\newcommand{\OS}{\mathcal{O}_S} 

\newcommand{\anel}[1]{#1_{\mathrm{ring}}} 

\DeclareMathOperator{\GL}{GL} 
\DeclareMathOperator{\SL}{SL} 
\newcommand{\eij}{e_{i,j}} 
\newcommand{\di}{d_i} 
\newcommand{\dk}[1]{d_{#1}} 
\newcommand{\ekl}[1]{e_{#1}} 
\newcommand{\bekl}[1]{\mathbf{e}_{#1}} 
\newcommand{\bd}[1]{\mathbf{d}_{#1}} 
\newcommand{\cekl}[1]{{\varepsilon}_{#1}} 
\newcommand{\cd}[1]{{\delta}_{#1}} 
\newcommand{\mbU}{\mathbf{U}} 
\newcommand{\mbD}{\mathbf{D}} 
\newcommand{\sbgpdi}{\mathcal{D}_{i}} 

\newcommand{\PGL}{\mathbb{P}\mathrm{GL}}

\newcommand{\Aff}{\mathbb{A}\mathrm{ff}} 
\newcommand{\mbA}{\mathbf{A}}
\newcommand{\ombA}{\overline{\mathbf{A}_4(R)}}
\newcommand{\ombU}{\overline{\mathbf{U}_4(R)}}
\newcommand{\mbB}{\mathbf{B}} 
\newcommand{\TI}{\mathbf{T}^I} 
\newcommand{\TIc}{\mathbf{T}^{I^c}} 
\newcommand{\PB}{\mathbb{P}\mathbf{B}} 
\newcommand{\mbS}{\mathbf{S}^I} 

\newcommand{\phee}{\varphi} 
\newcommand{\veps}{\varepsilon} 
\newcommand{\barra}[1]{\overline{#1}}

\newcommand{\gera}[1]{\langle {#1} \rangle}
\newcommand{\set}[1]{\{ #1 \}}
\newcommand{\vazio}{\varnothing} 

\newcommand{\mbf}{\mathbf}
\newcommand{\mb}{\mathbf}
\newcommand{\mc}{\mathcal}

\newcommand{\into}{\hookrightarrow}
\newcommand{\onto}{\twoheadrightarrow}
\newcommand{\nsgp}{\trianglelefteq} 

\newcommand{\Ri}{R_{\infty}} 

\newcommand{\Fn}[1]{\mathtt{F}_{#1}} 
\newcommand{\FPn}[1]{\mathtt{FP}_{#1}}
\newcommand{\vcd}{\mathtt{vcd}} 
\definecolor{amethyst}{rgb}{0.6, 0.4, 0.8}

\DeclareMathOperator{\Aut}{Aut}
\DeclareMathOperator{\Inn}{Inn}

\DeclareMathOperator{\carac}{char}
\DeclareMathOperator{\id}{id}
\DeclareMathOperator{\Fix}{Fix}
\DeclareMathOperator{\Sym}{Sym}

\title[Diversity of soluble groups with $R_\infty$]{Cohomological and quasi-isometric diversity of groups with property $R_\infty$}
\author{Karel Dekimpe, Paula M. Lins de Araujo and Yuri Santos Rego}
\address{Katholieke Universiteit Leuven, \newline 
Campus Kulak Kortrijk, \newline 
Etienne Sabbelaan 53,  \newline 
8500 Kortrijk, Belgi\"e}
\email{karel.dekimpe@kuleuven.be}
\address{University of Lincoln, \newline 
Charlotte Scott Research Centre for Algebra, College of Health and Science, \newline 
Isaac Newton Building, Brayford Pool, \newline 
LN6 7TS Lincoln, United Kingdom}
\email{pmacedolinsdearaujo@lincoln.ac.uk}
\email{ysantosrego@lincoln.ac.uk}
\subjclass[2020]{20E36, 20F16, 20F65, 20H25, 20J05, 57M07}
\keywords{Twisted conjugacy classes, property $R_\infty$, triangular matrices, Abels' groups, quasi-isometry classes, finiteness properties.}

\begin{document}
\begin{abstract}
How rich is the collection of groups with a given prominent property? In this work we approach this question for property~$R_\infty$, which says that every automorphism $\varphi$ of a given group has infinitely many orbits under the $\varphi$-twisted conjugation action $(g,x) \mapsto gx\varphi(g)^{-1}$. Generalising the soluble groups of Herbert Abels to a large family over many integral domains, we prove that most such groups have property~$R_\infty$ drawing from a classical result of Levchuk and a swift observation by Jabara. Within the broad programme of cataloguing finitely generated groups up to quasi-isometry, our groups can then be separated by finiteness properties and cohomological dimension whilst having~$R_\infty$. Abandoning finite presentability, we establish that property~$R_\infty$ is very abundant in a strong sense: there are uncountably many finitely generated groups (which can all be chosen to be amenable or non-amenable) that have~$R_\infty$ and are pairwise not quasi-isometric. The proofs vary in flavour. On the amenable side we use carefully constructed quotients of Abels' groups and a general strategy for quasi-isometric diversity established by Minasyan, Osin, and Witzel. For the non-amenable constructions we rely on modifications of Leary's type $\mathtt{FP}$ groups, further cohomological arguments, and recent powerful criteria for~$R_\infty$ due to Iveson, Martino, Sgobbi, Wong, and Fournier-Facio.
\end{abstract}
\maketitle \vspace{-1.0cm}
\thispagestyle{empty}

\section{Introduction}

We begin recalling \emph{twisted conjugacy}, where one allows automorphisms to take part in usual conjugation. 
Let $G$ be a group and $\varphi\in \Aut(G)$. Two elements $g, \, h \in G$ are $\varphi$-twisted conjugate (or simply $\varphi$-conjugate) if $g = x h \varphi(x)^{-1}$ for some $x\in G$. The equivalence classes of this relation 
are called $\varphi$(-twisted) conjugacy classes, or \emph{Reidemeister classes} of $\varphi$. The number of $\varphi$-conjugacy classes found in $G$ is the \emph{Reidemeister number} $R(\varphi) \in \Z_{>0} \cup \{\infty\}$. A central question in the area, which we motivate below in \Cref{sec:motivation}, is whether a group $G$ has \emph{property $R_\infty$}; that is, whether $R(\varphi)=\infty$ for all $\varphi\in \Aut(G)$. 

This article addresses two aspects of property~$\Ri$ in the universe of finitely generated groups, while contributing to the classification of (finitely generated) elementary amenable groups with this property. Firstly, 
by exhibiting a family of soluble groups with~$R_\infty$ that can be separated by 
finiteness length and by cohomological dimension, 
we add to a growing literature of groups with prescribed features that are separated by cohomological finiteness properties \cite{DesiSaidSSFPn,RachelStefanMatt,MonikaExotic,ClaudioPyInventiones,ClaudioEduardXiaolei}. 
Secondly, we note that there are uncountably many quasi-isometry classes of (finitely generated) groups with property~$R_\infty$, both in the non-amenable and in the amenable realms. The non-amenable case is a non-trivial application of recent advances in the literature \cite{IanFP,IvesonMartinoSgobbiWong2025}. 
The amenable case uses an explicit construction of quotients of members of our elementary amenable family mentioned above.

The groups we look at --- with the exception of those in \Cref{sec:qidNonamenable} --- are in fact soluble, inspired by work of Herbert Abels \cite{Abels0,Abels}, so let us briefly introduce them. 
Suppose $R$ is an integral domain, let $n\geq 2$ be given, and take $I \subseteq \{1, \ldots, n\}$. This data gives rise to the group of all upper triangular matrices $\mbS_n(R) \leq \GL_n(R)$ whose $j$-th diagonal entry 
is fixed to be $1$ whenever $j \notin I$; cf. \Cref{sec:structure} for details.

Well-known examples in this family include the following: 
    Taking $I=\{1, \ldots, n\}$, we have 
    \[\mbS_n(R) = \mbB_n(R) = \left( \begin{smallmatrix} * & * & \cdots & * \\  & * & & \\ & & \ddots & \vdots \\ & & & * \\  \end{smallmatrix} \right) \leq \GL_n(R),\]
    the group of $R$-points of the standard Borel subgroup of $\GL_n$; 
If we choose $I$ so that $|I| = n-1$, then 
    \[\mbS_n(R) \cong \PB_n(R) \leq \PGL_n(R),\]
    the group of $R$-points of the standard Borel subgroup of the projective group $\PGL_n$; 
    and 
 when $I = \{2,\ldots,n-1\}$, we obtain 
    \[\mbS_n(R) = \mbA_n(R) = \left( \begin{smallmatrix}
  1 & * & \cdots & \cdots & * \\
  0 & * & \ddots & & \vdots \\
  \vdots & \ddots & \ddots & \ddots & \vdots \\
  0 & \cdots & 0 & * & * \\
  0 & \cdots & \cdots & 0 & 1
 \end{smallmatrix} \right) \leq \GL_n(R),\]
where $\mbA_n$ denotes the $\Z$-group schemes of H.~Abels; see~\cite[Appendix~A]{BenGriHar}, \cite[Section~1.2]{YuriSoluble}, and references therein for an overview on Abels' groups. 
Readers familiar with algebraic fibring and the $\Sigma$-invariants of Bieri--Neumann--Strebel \cite{BNS} will promptly recognise many of our groups $\mbS_n(R)$ as kernels of characters of $\mbB_n(R)$, i.e. homomorphisms from $\mbB_n(R)$ to $\R$.

Our first theorem uses the groups $\mbS_n(R)$ to distinguish (countably many) quasi‑isometry classes of soluble groups  with~$\Ri$ according to cohomological features. Recall that the \emph{finiteness length} $\phi(G)$ of a group $G$ is the smallest $n \geq 0$ for which $G$ has an Eilenberg--MacLane space with compact $n$-skeleton, if such $n$ exists, and otherwise $\phi(G)=\infty$. And we denote by $\vcd(G)$ the \emph{virtual cohomological dimension}, which is defined as the cohomological dimension of any torsion-free subgroup of finite index in $G$, if such a subgroup exists, and otherwise $\vcd(G)=\infty$.

\begin{thm}\label{cohomologicalmainthm}
The groups $\mbS_n(R)$ can be separated by finiteness properties and cohomological dimension while having property~$R_\infty$ and being finitely presented.

That is to say, for every $n\geq 4$ there exists an infinite subset $D_n \subseteq \N$ such that, given $d \in D_n \cup \{\infty\}$, then the subset $I \subseteq \{1,\ldots, n\}$ and the ring $R$ can be so chosen that:
\begin{enumerate}
\item $\vcd(\mbS_n(R)) = d$ and $\phi(\mbS_n(R)) = n-2$;
\item $\mbS_n(R)$ has property~$R_\infty$.
\end{enumerate}
\end{thm}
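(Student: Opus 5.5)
The plan is to take $R$ to be a ring of $S$-integers $\OS$ in a global field $K$ and to take $I$ to be the Abels-type set $I=\{2,\ldots,n-1\}$, so that $\mbS_n(R)=\mbA_n(\OS)$. Each of the three properties will come from a separate input: known results on finiteness properties of Abels' groups, a vcd computation by means of the unipotent/torus decomposition, and an $\Ri$ argument based on Levchuk's description of automorphisms of unitriangular groups together with Jabara's observation.

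\textbf{Finiteness length.} For $K$ a number field, the theorem of Abels (and its extension by Bux to $S$-arithmetic and positive characteristic settings) says that $\mbA_n(\OS)$ is of type $\Fn{n-2}$ but not $\Fn{n-1}$. This holds as soon as the diagonal torus part is "large enough", for instance when $\OS^\times$ is infinite. The borderline cases over function fields need care, and I would restrict to number fields, or quote the results of \cite{YuriSoluble}. In particular, $\phi(\mbA_n(\OS))=n-2$, and the group is finitely presented because $n\geq 4$.

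\textbf{Virtual cohomological dimension.} Write $\mbA_n(\OS)=U\rtimes T$, where $U=\mbU_n(\OS)$ is unitriangular and $T\cong(\OS^\times)^{n-2}$. Take $K$ a number field and $S$ containing the archimedean places. The group $U$ is a torsion-free nilpotent group with a filtration whose factors are $\OS^{m}$, and $\OS$ has rank $[K:\Q]$ with homological dimension $[K:\Q]$ plus the number of finite places in $S$. For such poly-(torsion-free abelian of finite rank) groups, cohomological dimension is additive when it equals the Hirsch-length-type count. I expect this to give
\[
\vcd=\binom{n}{2}\bigl([K:\Q]+|S_f|\bigr)+(n-2)\,\mathrm{rk}\,\OS^\times,
\]
with $\mathrm{rk}\,\OS^\times=r_1+r_2-1+|S_f|$ by Dirichlet. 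One should double-check this formula against the known result that $\vcd$ of an $S$-arithmetic soluble group equals the dimension of the associated product of Bruhat--Tits and symmetric spaces. Varying $K$ and $|S_f|$ produces infinitely many values, and these values form the set $D_n$. To obtain $d=\infty$, take instead $K=\F_q(t)$ with a suitable $S$. Then $\OS$ contains an infinite elementary abelian $p$-group, so no finite-index subgroup is torsion-free and $\vcd=\infty$. In this case I would need the finiteness length to be $n-2$ in positive characteristic, which is Bux's result for Abels' groups over $\F_q[t,t^{-1}]$-type rings.

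\textbf{Property $\Ri$.} This is the step I expect to be the main obstacle. The strategy is to show that every automorphism $\varphi$ preserves a characteristic subgroup $N$, for example the last nontrivial term of the lower central series of $U$ or the centre, and to exploit the induced maps. For Abels' groups the centre $Z$ is the top-right corner $\cong\OS$, and it is central in the whole group. For a central characteristic subgroup, $R(\varphi)\geq R(\varphi|_Z)$, provided one passes to the quotient appropriately; it is simplest to work with the abelian quotient $G/[G,G]$ combined with the fact that $R(\varphi)\geq R(\bar\varphi)$ on $G/N$. Using Levchuk's classification, $\varphi$ acts on $U/[U,U]\cong\OS^{n-1}$ by a composition of diagonal, central, and flip-type automorphisms. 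Jabara's observation then says that if the induced automorphism on the corner entry is multiplication by a product $\lambda_1\lambda_{n-1}^{\pm1}$ of the scalars acting on the first and last superdiagonal entries, and these entries are fixed by the rigid $1$'s at positions $1$ and $n$, then the induced map on the quotient $G/\langle\text{interior}\rangle$ has eigenvalue $1$. Concretely, I would find a quotient $\OS^\times$-type or $\OS$-type torsion-free abelian group on which $\bar\varphi$ acts with a fixed direction, which gives $R(\bar\varphi)=\infty$. The delicate points are to control the torus twisting, which is handled by the fact that $T$ acts trivially on $Z$, and to handle the transpose-inverse flip. The flip swaps the first and last rows, so it preserves the corner up to sign; it therefore yields eigenvalue $\pm1$, and the case $-1$ requires a second quotient, namely the abelianization of $T$, on which $\varphi$ induces a finite-order map with eigenvalue $1$.
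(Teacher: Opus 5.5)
\textbf{There is a genuine gap in your $R_\infty$ step, which is the real content of the statement.}

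The centre $Z=\{e_{1,n}(r)\}$ cannot carry the argument. In Levchuk's decomposition $\varphi|_{\mbU_n(R)}=\iota_{\mathbf u}\circ\iota_d\circ f\circ\tau^\varepsilon\circ\alpha_*$, the factor $\iota_{d^c}$ with $d^c=\mathrm{diag}(u_1,1,\dots,1,u_n)$ is in general not inner in $\mbA_n(R)$. Consequently $\varphi$ acts on $Z$ by $r\mapsto\pm u_1u_n^{-1}\alpha(r)$. For example, conjugation by $\mathrm{diag}(p,1,\dots,1)$ is an automorphism of $\mbA_n(\Z[1/p])$ that acts on $Z\cong\Z[1/p]$ as multiplication by $p$, which has only $p-1$ twisted classes there. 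So your claimed eigenvalue $\pm1$ on the corner is false, and the remark that $T$ acts trivially on $Z$ only handles inner twists.

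You also drop the ring-automorphism factor $\alpha_*$. This is exactly what makes the problem delicate: you need infinitely many elements of $R$ fixed by all of $\anel{\Aut}(R)$.

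Your torus fallback is asserted without proof. It can be made to work over number fields with $S_f\neq\varnothing$. There the induced map on $T\cong(\OS^\times)^{n-2}$ is $\alpha^{\times(n-2)}$ or a flipped-and-inverted version of it, and $\alpha$ permutes $S_f$, which forces an eigenvalue $1$. In positive characteristic it fails in general. For $R=\F_q[t,t^{-1},(t-1)^{-1}]$ and $\alpha(t)=1/(1-t)$, the automorphism $\alpha_*$ of $\mbA_n(R)$ induces on $R^\times/\F_q^\times\cong\Z^2$ a map with characteristic polynomial $\lambda^2+\lambda+1$, so no eigenvalue $1$.

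The missing idea is to use Jabara's trick in its actual form: infinitely many fixed points in a finitely generated residually finite group. Apply it to the metabelian quotient $\mbS_n(R)/\mbU_n(R)'$, as the paper does.
\begin{enumerate}
\item Replace $\varphi$ by $\iota_{d^c_*}\circ\varphi$ with $d^c_*=d_2(u_1)d_{n-1}(u_n)\in\TI_n(R)$. This is where the (NG)-condition enters, and it cancels the outer diagonal twist on the entries $(1,2)$ and $(n-1,n)$.
\item The classes of $e_{1,2}(s)e_{n-1,n}(s)$ are then fixed, also when $\varepsilon=1$, since $\tau$ swaps these two entries.
\item Here $s$ ranges over $\Z$ in characteristic $0$, or over the powers of a transcendental non-unit fixed by the finite group $\anel{\Aut}(R)$ in positive characteristic.
\end{enumerate}

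\textbf{Your topological inputs also need repair, though this is routine.}

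An infinite unit group does not give $\phi=n-2$. If $S_f=\varnothing$ (e.g.\ $\mathcal O_K=\Z[\sqrt2]$), then $\mbA_n(\mathcal O_K)$ is polycyclic and $\phi=\infty$. You need $S_f\neq\varnothing$; the paper takes $R=\Z[S^{-1}]\subset\Q$ and quotes $\phi=n-2$ from \cite[Proposition~4.10]{YuriSoluble}.

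Your vcd formula is wrong. The group $(\OS,+)$ is torsion-free of rank $[K:\Q]$ and not finitely generated, so its cohomological dimension is $[K:\Q]+1$ regardless of $|S_f|$; for instance $\mathrm{cd}\,\Z[1/6]=2$. You do not need an exact formula, however. Finiteness of $\vcd$ follows from Serre's theorem on $S$-arithmetic groups, and $\vcd\geq(n-2)\,\mathrm{rk}\,\OS^\times\to\infty$ follows from \Cref{lem:vcdsubgroups} applied to the diagonal subgroup.

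For $d=\infty$, your citation does not give what you need. Bux's theorem gives $\phi=|S|-1$ for Borel subgroups, not for Abels' groups, and $\F_q[t,t^{-1}]$ has only $|S|=2$. The paper instead changes $I$ to a set with $n-1$ elements, so that $\mbS_n(R)\cong\PB_n(R)$. It takes $R=\F_p[t,f_1^{-1},\dots,f_{n-2}^{-1}]$ with $|S|=n-1$, where Bux's theorem gives exactly $\phi=n-2$. Local $p$-finiteness of $\mbU_n(R)$ then gives $\vcd=\infty$.
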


Our second result demonstrates that property~$R_\infty$ is in fact extremely abundant in the universe of finitely generated groups. 

\begin{thm}\label{diversemainthm}
In each of the classes of non-amenable and of amenable groups, there exist uncountably many, pairwise non-quasi-isometric, finitely generated groups with property~$R_\infty$. 
More precisely: 
\begin{enumerate}
\item \label{diversemainthm1} In the former class, there are uncountably many, pairwise non-quasi-isometric, accessible, infinitely-ended, finitely generated groups with property~$R_\infty$ and of homological type $\FPn{}$;
\item \label{diversemainthm2} In the latter class, there is a Dedekind domain of arithmetic type $\OS$ in positive characteristic such that Abels' group $\mbf{S}_4^{\{2,3\}}(\OS) = \mbA_4(\OS)$ is finitely presented and has uncountably many pairwise non-quasi-isometric quotient groups all of which have property~$R_\infty$.
\end{enumerate}
\end{thm}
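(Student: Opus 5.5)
The proof splits along the two items of the statement, and the two parts use quite different tools, as the abstract suggests.

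\emph{Item (\ref{diversemainthm1}).} I would start from Leary's uncountable family of type $\FPn{}$ groups $G_L(S)$, indexed by subsets $S\subseteq\Z$. Taking the flag complex $L$ to be a suitable acyclic but non-simply-connected complex, Leary shows that there are uncountably many quasi-isometry classes among these groups. Property $R_\infty$ is not available for these groups directly. To force it, I would pass to a free product $\Gamma_S = G_L(S) * H$, or an amalgam/HNN-type modification, with a fixed finitely presented group $H$ of type $\FPn{}$; the point is to land in the setting of the recent criteria of Iveson--Martino--Sgobbi--Wong and Fournier-Facio. Their results give $R_\infty$ for (many) infinitely-ended, accessible groups, for instance free products of freely indecomposable one-ended factors that are not both $\Z/2$. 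Type $\FPn{}$ is preserved under free products, because the Mayer--Vietoris resolution of a free product is built from resolutions of the factors.

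For accessibility, each $G_L(S)$ should be one-ended. I would check this with a cohomological argument: show $H^1(G_L(S);\Z G_L(S))=0$ using that the group is of type $\FPn{}$ and acts on a contractible CAT(0)-type cube complex with nice links. Then the Grushko/Stallings decomposition of $\Gamma_S$ is finite.

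Quasi-isometric diversity I would get from Papasoglu--Whyte: infinitely-ended groups are quasi-isometric if and only if they have the same set of quasi-isometry types of one-ended factors. Since the one-ended factors of $\Gamma_S$ are $G_L(S)$ and $H$, uncountably many classes among the $G_L(S)$ yield uncountably many classes among the $\Gamma_S$. Non-amenability is automatic for nontrivial free products other than $\Z/2*\Z/2$.

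\emph{Item (\ref{diversemainthm2}).} I would take $R=\OS=\F_q[t,t^{-1},(t-1)^{-1}]$ with $|S|=3$. Then $\mbA_4(\OS)$ is finitely presented by the finiteness results underlying \Cref{cohomologicalmainthm}, since $\phi(\mbA_4)=2$ here. The centre $Z$ of $\mbA_4(\OS)$ is the $(1,4)$ root group, which is isomorphic to $(\OS,+)$ and so is an infinite elementary abelian $p$-group, i.e.\ an infinite-dimensional $\F_p$-vector space. Following Minasyan--Osin--Witzel, uncountably many subgroups $N\le Z$ give quotients $\mbA_4(\OS)/N$ in pairwise distinct quasi-isometry classes up to countable ambiguity: quasi-isometry of finitely presented-by-central quotients is governed by a countable invariant, and each group has only countably many groups quasi-isometric to it among these. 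The quotients are elementary amenable.

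For $R_\infty$, I would choose $N$ to be characteristic, or at least invariant under every automorphism, in a controlled way. The key is to describe $\Aut(\mbA_4(\OS))$ via Levchuk: standard automorphisms (diagonal/inner, ring automorphisms, central-type, and the graph flip). Every automorphism of the quotient should lift to $\mbA_4(\OS)$, because $Z$ is characteristic and, I hope, the relevant $N$ sits inside $Z$ compatibly. Then one would argue that $R(\varphi)=\infty$ via the induced map on the abelianisation-type quotient $\OS^\times$-diagonal part or on $Z/N$, using Jabara's observation that an automorphism acting on an infinite abelian section with finitely many Reidemeister classes must be essentially fixed-point-free there, and deriving a contradiction from units of $\OS$.

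The main obstacle is making the uncountable family of $N$ compatible with all automorphisms. Ring automorphisms of $\OS$ act on $Z$, so I would restrict to subspaces $N$ invariant under the countable group generated by these actions and under multiplication by the unit group scalings. I would first try $N$ ranging over $\F_q[t]$-submodules of a suitable quotient, which still gives continuum many choices.
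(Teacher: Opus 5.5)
Your treatment of item~(i) is essentially the paper's route. You take an uncountable family of pairwise non-quasi-isometric Leary-type groups of type $\FPn{}$, form free products with a fixed one-ended group of type $\FPn{}$ (the paper uses $\Z^2$), apply Papasoglu--Whyte, and finish with the criterion of Iveson--Martino--Sgobbi--Wong/Fournier-Facio. Two small corrections: the uncountability of quasi-isometry classes is due to Kropholler--Leary--Soroko, and if you take their Poincar\'e duality groups, one-endedness follows at once from $H^1(G;\Z G)=0$ and Specker's formula, so no cube-complex argument is needed.

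Item~(ii), however, has a genuine gap at the step that is supposed to give $R_\infty$ for the quotients. You propose to take $N\le Z(G)$, with $G=\mbA_4(\OS)$, invariant under $\Aut(G)$, and to lift automorphisms of $G/N$ to $G$.
\begin{itemize}
\item Nothing you say makes such lifts exist. Even if they did, $R_\infty$ for a lift would not descend to $G/N$, since \Cref{lem:quotient} only transfers infinite Reidemeister numbers from a quotient \emph{up} to the group.
\item Worse, the invariance requirement destroys the uncountable family. Conjugation by $d_1(u)$, $u\in U(\OS)$, normalises $G$ inside $\mbB_4(\OS)$ and sends $\ekl{1,4}(r)$ to $\ekl{1,4}(ur)$. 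So an $\Aut(G)$-invariant $N\le Z(G)\cong(\OS,+)$ is stable under multiplication by all units. Since $\OS$ is additively spanned by its units, $N$ must be an ideal, and the countable Noetherian ring $\OS$ has only countably many ideals. Passing to $\F_q[t]$-submodules that are stable under unit scalings does not help, for the same reason.
\item \Cref{thm:MOW} concerns the whole family $\{G/E_J\}_{J\subseteq\N}$; a countable subfamily gives you nothing.
\end{itemize}

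The missing idea is that no compatibility between $E_J$ and automorphisms is needed at all. Since $Z(G/Z(G))=1$, one has $Z(G/E_J)=Z(G)/E_J$ for \emph{every} $J$. Hence $G/Z(G)=\overline{\overline{\mbA_4(\OS)}}$ is the quotient of each $G/E_J$ by its centre, so it is a characteristic quotient, and by \Cref{lem:quotient} it suffices to prove $R_\infty$ for this single group.

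That cannot be read off from Levchuk, because automorphisms of $\mbU_4(R)/Z(\mbU_4(R))$ are not automorphisms of $\mbU_4(R)$. The paper therefore argues directly:
\begin{itemize}
\item automorphisms permute the superdiagonal root groups modulo $\mbU_4(R)'$ (\Cref{lem:fundamental});
\item commutators in $\mbU_4(R)/Z(\mbU_4(R))$ force $\sigma(2,3)=(2,3)$ (\Cref{e23-is-fixed});
\item $\OS=R_f=\F_2[t,t^{-1},f(t)^{-1}]$ is chosen with $f$ irreducible and not self-reciprocal, so that $\anel{\Aut}(R_f)$ is trivial (\Cref{lem:AutRing}).
\end{itemize}
The induced map on $U(R_f)\times U(R_f)$ is then $\id$ or $(u,v)\mapsto(v^{-1},u^{-1})$, and both have infinitely many fixed points.

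Your ring $\F_q[t,t^{-1},(t-1)^{-1}]$ is exactly of the excluded kind. It has the ring automorphism $t\mapsto(1-t)^{-1}$, which acts on $U(\OS)$ modulo constants by a matrix with characteristic polynomial $\lambda^2+\lambda+1$. So the induced map on the diagonal quotient has finite Reidemeister number, and an argument through the ``diagonal part'' alone cannot work there.
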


Experts in the field will find the statement in \Cref{diversemainthm}\eqref{diversemainthm1} plausible. Modulo the use of many other famous results, the construction relies on two main ingredients that have only recently appeared in the literature: 
on the~$R_\infty$ side it is an application of a recent breakthrough by Iveson--Martino--Sgobbi--Wong and Fournier-Facio \cite{IvesonMartinoSgobbiWong2025}; on the topological side, we use Leary's uncountable family of duality groups (see \cite[Section~18]{IanFP} and \cite{KLS}) in an adapted version of an argument due to Li--S\'anchez Salda\~{n}a \cite[Theorem~4.2]{KevinLuis0}. 
The cohomological and coarse geometric statements in Theorems~\ref{cohomologicalmainthm} and~\ref{diversemainthm}\eqref{diversemainthm2} 
follow from established cohomological and topological methods: finiteness properties of soluble (mostly $S$-arithmetic) linear groups \cite{Abels,Bux04,YuriSoluble}, quasi-isometric invariance of such properties and of cohomological dimension \cite{AlonsoFPnQI,RomanQI}, and quasi-isometric diversity within the space of marked finitely generated groups \cite{MinasyanOsinWitzel}. 
 The key missing step is then to prove that the groups under consideration have~$R_\infty$. In fact, we do a bit more, but before discussing this missing ingredient we need a definition.

\begin{dfn} \label{def:conditionI}
Given a natural number $n \geq 2$, we say that the set $I \subseteq \{1,\ldots,n\}$ satisfies the (NG)-condition (for ``no gaps'') when the following holds:
\begin{equation}\label{conditionI}
\forall i\in \{1, \dots, n-1\}, \quad i \notin I \Longrightarrow i + 1 \in I.
\end{equation}
\end{dfn}

Our main theorem is a partial classification of finitely generated groups $\mbS_n(R)$ with property~$R_\infty$. 

\begin{thm}\label{mainthm}
Let $R$ be an integral domain either of characteristic zero, or of the form 
$R=\F_q[t,f_1(t)^{-1}, \ldots, f_{\ell-1}(t)^{-1}]$, where $\ell \geq 2$ and the polynomials $f_i(t) \in \F_q[t]$ are irreducible and pairwise coprime. 
If $n \geq 4$ and $I \subseteq \{1,\ldots,n\}$ satisfies the (NG)-condition, then $\mbS_n(R)$ has property~$\Ri$ whenever $\mbS_n(R)$ is finitely generated. 
\end{thm}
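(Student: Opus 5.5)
We plan to find a characteristic quotient of $\mbS_n(R)$ on which every automorphism induces a map with infinitely many Reidemeister classes. The standard fact we use is this: if $N \nsgp G$ is characteristic and $\bar\varphi$ is the map induced by $\varphi$ on $G/N$, then $R(\varphi) \geq R(\bar\varphi)$. Write $\mbS_n(R) = \mbU_n(R) \rtimes \TI(R)$, where $\mbU_n(R)$ is the unipotent radical and $\TI(R) \cong (R^\times)^{|I|}$ is the diagonal part. The (NG)-condition is what makes the structure of $\mbU_n(R)$ visible intrinsically. Every superdiagonal root subgroup $\ekl{i,i+1}(R)$ is acted on by the torus through the character $t_i t_{i+1}^{-1}$. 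This character is nontrivial exactly when at least one of $i, i+1$ lies in $I$, and (NG) guarantees this for every $i$. Consequently the commutator subgroup satisfies $[\mbS_n,\mbS_n] = \mbU_n(R)$, up to a finite-index or torsion discrepancy that we control using finite generation. In particular $\mbU_n(R)$ is characteristic, being essentially the Fitting subgroup or the derived subgroup.

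Next we describe $\Aut(\mbS_n(R))$ via Levchuk's classical description of automorphisms of triangular and unitriangular groups. We expect every automorphism to be a composition of four kinds of maps: inner automorphisms, diagonal automorphisms, ring automorphisms applied entrywise, and possibly the flip $A \mapsto J(A^{-T})J$. The flip is an automorphism only when $I$ is symmetric, and (NG) is preserved under it. Central and "extremal" automorphisms, which are trivial modulo a term of the lower central series, can then be discarded, since they do not affect the quotient we pass to. Composing with an inner automorphism does not change $R(\varphi)$ up to a bijection of classes, so it suffices to treat the remaining standard pieces.

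The key quotient is the abelianisation $\mbS_n(R)^{ab}$, or more precisely its image $\TI(R) \cong (R^\times)^{|I|}$ modulo torsion. By finite generation this is a finitely generated free abelian group $\Z^m$ with $m \geq 1$ when $I \ne \varnothing$. (If $I=\varnothing$, the group is unipotent over $R$, and finite generation forces $R$ to be finitely generated as a $\Z$-module; we expect a separate nilpotent argument or a direct exclusion here.) On $\Z^m$ the induced map $\bar\varphi$ has $R(\bar\varphi)=\infty$ iff $\bar\varphi$ has eigenvalue $1$. Jabara's observation should supply this. The automorphism also acts compatibly on the superdiagonal quotient $\mbU_n/[\mbU_n,\mbU_n] \cong R^{n-1}$, and the torus acts there by the characters $t_it_{i+1}^{-1}$. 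Equivariance forces $\bar\varphi$ to permute these characters, up to ring automorphisms of $R^\times$, by the permutation induced by the identity or the flip. A permutation-type action on a lattice containing these root characters has a fixed nonzero vector, for instance a sum over an orbit, or the determinant-type character $\prod t_i$ restricted appropriately. This yields eigenvalue $1$ and hence $R_\infty$.

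The main obstacle is the interaction with ring automorphisms of $R$ acting on $R^\times/\text{torsion}$. In characteristic zero, or for $R=\F_q[t,f_i^{-1}]$, the unit group is finitely generated, and ring automorphisms may act nontrivially on it. In the positive-characteristic case they may permute the $f_i$ and the point at infinity. Here we would use the hypothesis on $R$: the permutation action on $S$-units fixes the "total degree" or valuation-sum functional, so a fixed vector survives. In characteristic zero we would argue via $\det$ or norm maps. The hard step is thus to show that the combined map on $(R^\times)^{|I|}\otimes\Q$, built from a permutation of coordinates and an automorphism of $R^\times$, always has eigenvalue $1$. Our first attempt would be to exhibit the invariant functional explicitly.
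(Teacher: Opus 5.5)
\textbf{There is a genuine gap: the key quotient is the wrong one.} You put all the weight on the torus quotient $\mbS_n(R)/\mbU_n(R)\cong U(R)^{|I|}$ modulo torsion. For the rings in the theorem this quotient does not carry enough information, so no argument confined to it can work.

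\begin{itemize}
\item For $R=\Z$ one has $U(\Z)=\{\pm1\}$. The lattice is therefore trivial, so your claim $m\geq 1$ is false, and every induced map has finitely many Reidemeister classes there. Yet $\mbS_n(\Z)$ is covered by the theorem.
\item Even when $U(R)$ is infinite, the eigenvalue $1$ you need can fail to exist. For $R=\Z[\sqrt2]$, entrywise Galois conjugation $\alpha_\ast$ is an automorphism of $\mbS_n(R)$. Since $\alpha(1+\sqrt2)=-(1+\sqrt2)^{-1}$, it acts as $-\mathrm{id}$ on $U(R)^{|I|}/\text{torsion}\cong\Z^{|I|}$.
\item Likewise, $t\mapsto t^{-1}$ on $R=\F_q[t,t^{-1}]$ (the case $\ell=2$, $f_1=t$) acts as $-\mathrm{id}$ on this lattice.
\end{itemize}

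The invariant functionals you propose cannot rescue this, because they vanish identically on units. Norms of units of $\Z[\sqrt2]$ are $\pm1$, and by the product formula the degree-weighted valuation sum of an $S$-unit is $0$.

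Two smaller points. First, $\mbU_n(R)$ is not characteristic in $\mbB_n(R)$ in general: for example $g\mapsto(-1)^{g_{12}}g$ is an automorphism of $\mbB_n(\Z)$ that does not preserve $\mbU_n(\Z)$. This is why the paper handles $I=\{1,\ldots,n\}$ through the characteristic quotient $\PB_n(R)$. Second, $I=\varnothing$ is already excluded by (NG), so no separate argument is needed for it.

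\textbf{The missing idea.} The infinitely many fixed points must come from the unipotent part, using elements of $R$ (not of $U(R)$) that are fixed by every ring automorphism. The paper's argument runs as follows.

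\begin{itemize}
\item Pass to the metabelian quotient $\mbS_n(R)/\mbU_n(R)'$. It is finitely generated and, by P.~Hall, residually finite, so Jabara's trick applies there.
\item Study the induced map on $\mbU_n(R)/\mbU_n(R)'\cong R^{n-1}$. By Levchuk's theorem, after composing with an inner automorphism, this map is $\overline{\iota_{d}\circ\tau^\veps\circ\alpha_\ast}$ with $d$ diagonal.
\item The part of $d$ supported on $I^c$ is not inner. The (NG)-condition is what allows its effect on positions $(1,2)$ and $(n-1,n)$ to be cancelled by an element $d^c_\ast\in\TI_n(R)$.
\item After this correction, every element $\bekl{1,2}(s)\bekl{n-1,n}(s)$ is fixed, whether or not the flip occurs. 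Here $s$ ranges over $\Z$ in characteristic $0$, and over the powers of a transcendental element $x$ fixed by the finite group $\anel{\Aut}(R)$ in the function field case.
\end{itemize}

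This last step is where the hypothesis on $R$ actually enters.
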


While the (NG)-condition \eqref{conditionI} might seem artificial at first glance, it arises naturally: for many interesting domains $R$, the (NG)-condition is necessary for $\mbS_n(R)$ to be finitely generated; cf. \Cref{pps:SIfinitelygenerated}. 

Moreover, we remark that the hypotheses on the size $n\geq 4$ of our matrices and the `no gaps shape' of $I \subseteq \{1,\ldots,n\}$ are optimal in a certain sense. Precisely:

\begin{pps} \label{obs:mainthmisoptiomal}
Let $n \geq 2$ be an integer number.
    \begin{enumerate} 
    \item For all such $n$, one may always choose 
    an integral domain $R$ of characteristic zero 
    for which the groups $\mb{S}_n^{\varnothing}(R)$ do not have property~$R_\infty$.
    \item If $n < 4$, there exist 
    integral domains $R,$ either of characteristic zero or in $\F_q(t)$, 
    and a non-empty subset $I \subseteq \{1, \ldots, n\}$ satisfying the (NG)-condition such that $\mb{S}_n^I(R)$ does not have property~$R_\infty$. 
    \end{enumerate}
\end{pps}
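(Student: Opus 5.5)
Both parts come down to exhibiting explicit automorphisms with finite Reidemeister number. The basic tool is the standard fact that for a finitely generated torsion-free nilpotent group $N$ and $\varphi\in\Aut(N)$, $R(\varphi)<\infty$ exactly when no eigenvalue of the induced maps on the factors of the (rational) lower central series equals $1$. In that case $R(\varphi)$ is the product of the Reidemeister numbers on the factors. For abelian groups, $R(\varphi)=|\mathrm{coker}(\varphi-\id)|$.

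\textbf{Part (1).} When $I=\varnothing$, the group $\mb{S}_n^{\varnothing}(R)$ is the unitriangular group $\mbU_n(R)$. I would take $R=\Z$, so that $\mbU_n(\Z)$ is a finitely generated torsion-free nilpotent group, and look for a diagonal automorphism. Conjugation by $\mathrm{diag}(t_1,\dots,t_n)$ multiplies the entry $(i,j)$ by $t_it_j^{-1}$, and over $\Z$ this only gives signs $\pm1$. Those always leave some entries fixed, so the ring must be larger.

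Instead I would take $R=\Z[\sqrt2]$, which has the unit $u=1+\sqrt2$, and note that $\mbU_n(R)$ is still a finitely generated torsion-free nilpotent group. Conjugation by $d=\mathrm{diag}(u^{a_1},\dots,u^{a_n})$ preserves $\mbU_n(R)$. On the root subgroup $(i,j)\cong R\cong\Z^2$ it acts as multiplication by $u^{a_i-a_j}$. The eigenvalues of this map are $u^{a_i-a_j}$ and its Galois conjugate $(1-\sqrt2)^{a_i-a_j}$, and neither is $1$ as soon as $a_i\neq a_j$. So I choose $a_i=i$, for instance. The lower central series factors of $\mbU_n(R)$ are direct sums of root subgroups, namely the superdiagonals. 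Hence $\varphi-\id$ is injective on each factor, which has finite rank, so each cokernel is finite. The multiplicativity of Reidemeister numbers over the central series then gives $R(\varphi)<\infty$.

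\textbf{Part (2).} Here $n\in\{2,3\}$. For $n=2$, take $I=\{1,2\}$ or $I=\{2\}$. With $I=\{2\}$ the group is $R\rtimes R^\times$, acting by $x\mapsto u^{-1}x$, essentially an $\mathrm{Aff}$-type group. The natural candidate ring is $R=\Z[1/2]$, giving $\mb{S}_2^{\{2\}}(\Z[1/2])\cong\Z[1/2]\rtimes(\{\pm1\}\times\Z)$, a relative of Baumslag--Solitar groups. Since $BS(1,2)$ itself has $R_\infty$, I would instead try a ring with two independent units, such as $\Z[1/6]$. There $\Z[1/6]\rtimes\Z^2$ is known to admit automorphisms with finitely many classes, by the Gonçalves--Kochloukova/Taback--Wong analysis of $\Z[1/n]\rtimes\Z^k$.

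For $n=3$ I would take $I=\{2\}$, which satisfies (NG), so the group is $\mbA_3(R)$, the Heisenberg group over $R$ extended by a torus $R^\times$ acting on the two off-diagonal corner positions. The plan is an automorphism combining a field automorphism or sign flip on $R^\times$ with a diagonal-type action on the unipotent part. The key requirement is that the induced map on the abelianisation quotient $R^\times$ has no fixed directions, e.g.\ inversion composed with something, while the action on the unipotent radical also avoids eigenvalue $1$. Then $R(\varphi)$ is bounded by an addition formula over the extension $U\to G\to R^\times$, summing over the finitely many classes downstairs with twisted terms, each finite. A positive-characteristic analogue uses $R=\F_q[t,t^{-1}]$ with the ring automorphism $t\mapsto t^{-1}$ combined with inversion.

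\textbf{Main obstacle.} I expect the main difficulty in part (2) to be choosing $R$ and $\varphi$ so that all the twisted factors on the fibres remain finite simultaneously. Fibres over non-identity classes acquire twisted actions that mix with conjugation. I would first try the $\Z[1/6]$-type rings with the automorphism acting as inversion on units; in characteristic zero this is the cleanest option.
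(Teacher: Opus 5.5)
Your Part~(1) is sound. Conjugation by $\mathrm{diag}(u,u^2,\dots,u^n)$ with $u=1+\sqrt2$ multiplies the $(i,j)$-entry by $u^{i-j}\neq 1$. It is therefore fixed-point-free on the finitely generated torsion-free nilpotent group $\mbU_n(\Z[\sqrt2])$, and \Cref{inverse-Jabara} gives $R(\varphi)<\infty$ directly. The paper simply cites Nasybullov for such an automorphism, so your explicit one is a self-contained substitute. Part~(2), however, has a genuine gap.

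\textbf{The case $n=2$.} Your proposed example has property $R_\infty$, and the literature on $\Z[1/m]\rtimes\Z^k$ (Taback--Wong) says these groups have $R_\infty$, not the opposite. Here is a direct argument for $\mathbf{S}^{\{2\}}_2(\Z[1/6])=\Aff(\Z[1/6])$. The subgroup $\mbU_2\cong(\Z[1/6],+)$ is characteristic by \Cref{pps:char}. Every additive automorphism of $\Z[1/6]$ has the form $x\mapsto wx$ with $w$ a unit. Applying $\varphi$ to $d_2(u)e_{1,2}(x)d_2(u)^{-1}=e_{1,2}(u^{-1}x)$ gives $\bar\varphi(u)^{-1}wx=wu^{-1}x$. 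Hence $\bar\varphi=\mathrm{id}$ on $U(\Z[1/6])\cong\Z/2\times\Z^2$, and $R(\varphi)=\infty$ by \Cref{lem:quotient}. The same holds over every $\Z[1/m]$ with $m>1$. For $I=\{1,2\}$, pass to the characteristic quotient $\PB_2$.

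This rigidity is why the paper leaves characteristic zero for $n=2$: additive automorphisms are forced to be semilinear, compare \Cref{lemma-auto}. It takes $R=\F_q[t]$, whose additive group is an infinite-dimensional $\F_q$-space with only finitely many units. It then quotes \cite[Proposition~6.1]{Bn2} for an automorphism of $\PB_2(\F_q[t])$ with finite Reidemeister number. Your fallback $\F_q[t,t^{-1}]$ does not help: whether $\PB_2(\F_q[t,t^{-1}])$ has $R_\infty$ is stated as open at the end of the paper.

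\textbf{The case $n=3$.} You give no automorphism. Moreover, if ``diagonal-type'' means an action that preserves the root positions of $\mbU_3(R)$, it cannot work over, say, $R=\Z[\sqrt2]$, which is additively spanned by its units. Suppose $\varphi(e_{1,2}(x))\equiv e_{1,2}(\psi(x))$ modulo the centre. The same computation as above gives $\psi(ux)=\bar\varphi(u)\psi(x)$. By \Cref{lemma-auto}, $\bar\varphi$ is then the restriction of a ring automorphism $\tau$, and $\psi=c\,\tau$ with $c=\psi(1)$. Now either $\Fix(\bar\varphi)$ is infinite, so $R(\bar\varphi)=\infty$. Or $\Fix(\bar\varphi)$ is finite, while the twist $\iota_{d_2(c)}\circ\varphi'$ acts as $x\mapsto\tau(x)$ on the $(1,2)$-coordinate. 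That map fixes $\Z$, which forces infinitely many classes over that fibre.

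The missing idea is the flip. The automorphism in \Cref{pps:dim3char0} exchanges the two superdiagonal entries, via $x\mapsto -y/u$ and $y\mapsto vx/u$. This exchange is exactly what induces inversion on $\mathcal{D}_2(R)\cong U(R)$, giving $R(\bar\varphi)=[U(R):U(R)^2]<\infty$. It also scales the corner by a unit $v\neq\pm1$, so that every $\iota_{d(u)}\circ\varphi'$ is fixed-point-free on $\mbU_3(R)$. \Cref{inverse-Jabara} and \Cref{lem:heath} then conclude. When $U(R)=\{\pm1\}$ no such $v$ exists, and a separate, unipotent-mixing automorphism is needed (\Cref{lem:dim3char0trivialunits}).
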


We claim no originality in \Cref{obs:mainthmisoptiomal} as it can be puzzled together from known results in the literature, although we contribute with a few examples that seem to be new; see Section~\ref{sec:reallastsection} for a proof of \Cref{obs:mainthmisoptiomal}. 

\subsection{Motivation, context and open questions} \label{sec:motivation} 
Property~$R_\infty$ first rose to prominence due to its topological implications. Decades after Lefschetz, Nielsen, and Reidemeister developed classical fixed-point theory, a series of fundamental observations made by 
Franz \cite{Franz}, Jiang \cite{Jiang0}, 
Brooks--Brown--Pak--Taylor \cite{BBPT}, Anosov \cite{AnosovNil} and Fadell--Husseini \cite{FadellHusseini}, and Keppelmann--McCord \cite{KeppelmannMcCordSolv} yielded the following: for special types of compact connected solvmanifolds $M$, if $\pi_1(M)$ has property~$R_\infty$ then the Nielsen number of every self-homotopy equivalence of $M$ vanishes. Moreover, if $R_\infty$ is absent from $\pi_1(M)$, Nielsen numbers of self-homotopy equivalences of $M$ are controlled by Reidemeister numbers of automorphisms of $\pi_1(M)$. We refer the reader to \cite{JiangPrimer} for a modern exposition on fixed-point theory and \cite{FelshtynHillWong,KarelIrisInfrasolv} for the relationship between Reidemeister and Nielsen numbers in such spaces. This allowed for the construction of manifolds whose self-homeomorphisms have a prescribed number of essential fixed points, by starting with a fundamental group $\pi_1(M)$ that is e.g. free nilpotent \cite{DacibergWongCrelle,KarelDacibergNil,KarelSamVargas,DekimpeLathouwers}, a nilpotent quotient of a surface group \cite{KarelDacibergSurface}, or (arithmetic) polycyclic of exponential growth \cite{KarelSamIrisSolv,Bn1}.

On the purely group-theoretic side, Zappa \cite{Zappa0} seems to have been the first to investigate the presence or absence of property~$R_\infty$, studying how this affects the structure of some polycyclic groups. There was rekindled interest on this property following up on works of Fel'shtyn--Hill \cite[Section~1.5.3]{FelshtynHill} and Levitt--Lustig \cite[Theorem~3.5]{LevittLustig}. Recalling that (in adequate models) `almost all' random groups are word-hyperbolic \cite{GromovEssays}, property~$R_\infty$ naturally abounds within the universe of finitely presented groups because hyperbolic groups have~$R_\infty$, as first outlined by Levitt--Lustig \cite{LevittLustig} and Fel'shtyn \cite{FelshtynHyperbolic}, and fully clarified by Iveson--Martino--Sgobbi--Wong \cite{IvesonMartinoSgobbiWong2025}. Consequently, there has been an increased interest in establishing results relating~$R_\infty$, or the lack thereof, to a group's structure; see, for instance, \cite[Theorem~C]{Jabara}, \cite[Corollary~1.20]{TimmXiaolei0}, \cite[Theorem~2.5]{BhuniaBose2}, \cite[Theorem~5.4]{Brasuca0}, and \cite[Corollary~8.1.4]{IvesonMartinoSgobbiWong2025} for some examples of various flavours.

But in the spirit of classifying all finitely generated groups up to quasi-isometry, how abundant is property~$R_\infty$? The historical discussion above indicates a clear separation: the literature splits into searching for~$R_\infty$ for amenable and for non-amenable groups. Regarding the latter case, there is a general belief that `most' finitely generated non-amenable groups will end up having property~$R_\infty$. Concretely, \cite[Conjecture~R]{FelshtynTroitskyAspects} states that finitely generated, non-virtually soluble, residually finite groups should have~$R_\infty$, and many examples in the literature support this conjecture. 

A recent breakthrough offers a representative illustration. Iveson, Martino, Sgobbi, and Wong \cite{IvesonMartinoSgobbiWong2025} heavily expanded on the methods of Levitt--Lustig \cite{LevittLustig} and showed that all finitely generated accessible groups with infinitely many ends have property~$R_\infty$. Immediately thereafter, Fournier-Facio vastly generalised both this result and a cohomological criterion of Gon\c{c}alves--Kochloukova \cite{DesiDaciberg} as slightly improved in \cite[Theorem~5.4]{Brasuca0}. His more general criterion for~$R_\infty$ is proved with a short argument using quasi-morphisms, featured as an appendix to the same paper \cite{IvesonMartinoSgobbiWong2025}. As a consequence of their findings, there are indeed uncountably many quasi-isometry classes of non-amenable groups with property~$R_\infty$. As this statement has not appeared in the literature, we record it in the form of \Cref{diversemainthm}\eqref{diversemainthm1} here, briefly outlining in \Cref{sec:qidNonamenable} how one such collection of groups can be constructed to invoke \cite{IvesonMartinoSgobbiWong2025}.

In contrast, it is completely unclear which finitely generated amenable groups exhibit property~$R_\infty$. Since such groups are either virtually-$\Z$ or one-ended, they do not fit the framework of the paper~\cite{IvesonMartinoSgobbiWong2025}. Moreover, as mentioned above, the subclasses of nilpotent and of polycyclic groups have played a central role and have attracted considerable attention since these are the ones leading to constructions of interesting manifolds with prescribed Nielsen numbers {for all self-homotopy equivalences}. To obtain such manifolds, a number of authors discovered nilpotent or polycyclic groups with, and many without, property~$R_\infty$, albeit with no clear universal pattern showing `what makes' these groups have~$R_\infty$ or not; see \cite{DacibergWongCrelle,KarelSamVargas,KarelDacibergNil,KarelDacibergSurface,KarelSamIrisSolv,KarelIrisInfrasolv,Bn1,DekimpeLathouwers,Bn2} and references therein for a non-exhaustive list.

The known sources of (finitely generated) nilpotent and polycyclic groups with property~$R_\infty$ display a weak form of `quasi-isometric diversity', as they can be distinguished by cohomological dimension. Though much like hyperbolic groups, a further form of cohomological distinction amongst such groups is not so immediate since they are all of homotopical type $\Fn{\infty}$. In particular, they are finitely presented and thus only countably many quasi-isometry classes can be found there. 

These considerations motivated the main results of the present paper. 
Specifically, we wanted to address the following natural questions: 
\begin{itemize}
    \item Are there (finitely generated) amenable groups with property $R_\infty$ separated by finiteness properties? 
    \item Are there (finitely generated) amenable non-polycyclic groups with property $R_\infty$ separated by cohomological dimension? 
    \item Are there uncountably many quasi-isometry classes of (finitely generated) groups with property $R_\infty$?
\end{itemize}

Theorems~\ref{cohomologicalmainthm} and~\ref{diversemainthm} thus answer all of the above in the affirmative. 
Once settled, these questions lead one to wonder about the following broad problem, taking inspiration from a recent preprint of Vankov \cite{VladUncountableQI}.

\begin{prob}
Given a quasi-isometry class of a group with property~$R_\infty$, describe which groups in this class (up to isomorphism) also have~$R_\infty$.
\end{prob}

Our key technical contribution, \Cref{mainthm}, leaves open the question whether arbitrary (infinite) integral domains $R$ in positive characteristic can be plugged in to get $\mbS_n(R)$ with property~$R_\infty$. Indeed, there is no impediment coming from Levchuk's theorem, and the other core idea used --- Jabara's trick --- leads us to search for infinitely many fixed points in the base ring $R$ for every $\alpha \in \anel{\Aut}(R)$. Since such an $R$ in positive characteristic will have to be finitely generated as a module over its units (due to \Cref{pps:SIfinitelygenerated}), it seems reasonable that these domains will behave similarly to those in the current statement of \Cref{mainthm}. We thus pose:

\begin{conj}\label{prob3}
Let $n \geq 4$ and let $I\subseteq \{1, \dots, n\}$ be a set satisfying the (NG)-condition~\eqref{conditionI}. Suppose $R$ is an infinite integral domain of \emph{arbitrary} characteristic and such that $\mbS_n(R)$ is finitely generated. Then $\mbS_n(R)$ has property~$\Ri$.
\end{conj}


\subsection{Outline of the paper} 
We open with \Cref{sec:structure}, which provides the background needed for the remainder of the paper, introduces several auxiliary results, and presents useful properties and detailed descriptions of the groups we investigate. 

\Cref{sec:3} illustrates our approach by establishing \Cref{mainthm} in the particular case of $\PB_n(R)$. This serves as a model for the general argument, which is carried out in \Cref{sec:4}.

\Cref{sec:separating} develops the complete form of \Cref{cohomologicalmainthm}, refining the formulation given in the introduction. This result appears there in the form of two theorems, both consequences of \Cref{mainthm}. 

In~\Cref{sec:Abels}, we deal with quotients of Abels' groups $\mbA_4(\OS)$. More precisely, in \Cref{sec:notpenultimate}, we show that there is a Dedekind domain $\OS = R_f$ such that a quotient of $\mbA_4(R_f)$ by a characteristic subgroup has property~$\Ri$. 
The previous \Cref{sec:fundamentallemma} provides the ground for such proof.
We proceed in \Cref{sec:qidAn} with a proof of \Cref{diversemainthm}\eqref{diversemainthm2}, 
which follows from the fact that the mentioned quotients of $\mbA_4(R_f)$ have~$\Ri$. Then in \Cref{sec:qidNonamenable} we give one construction of (uncountably many, non-quasi-isometric) groups that fit into the framework of~\cite{IvesonMartinoSgobbiWong2025}, demonstrating that quasi-isometric diversity combined with~$R_\infty$ is also present in the class of non-amenable groups. 

We close with the proof of \Cref{obs:mainthmisoptiomal} in \Cref{sec:reallastsection}, showing that the bound $n \geq 4$ in \Cref{mainthm} is sharp and the `shape' of $I \subseteq \{1,\ldots,n\}$, as determined by the (NG)-condition, is optimal.


\section{Background}\label{sec:structure}
In this section, we collect the background needed for the remainder of the paper.
We begin by presenting a few auxiliary results on twisted conjugacy, and by introducing notation and conventions used throughout. 
We turn to the groups $\mbS_n(R)$, discussing useful properties including generating sets and relations. 
We end the preliminaries discussing characteristic subgroups.  

\subsection{Setup, notation, and auxiliary results}\label{sec:conventions}

All rings that appear in this paper are assumed to be unital with $1 \neq 0$, and we denote the group of units of a ring~$R$ by $U(R)$. 

For any group $G$, we sometimes use the notation 
\[\mu(g,h) := h g h^{-1}\] for the conjugation action, as we will often conjugate by, and with, expressions of considerable length.  Moreover, for fixed $g\in G$ we also denote by $\iota_g \in \Aut(G)$ the induced inner automorphism 
\[\iota_g(h) = ghg^{-1}.\] 
Our commutator convention is $[x,y]= xyx^{-1}y^{-1}$. The commutator subgroup of a group $G$ is interchangeably denoted by $G'$ and by $[G,G]$.

With our notation fixed, we brielfy discuss twisted conjugacy. 
The following 
results are all standard, but we include them here to keep the article as self‑contained as possible.

\begin{lem}[{E.g., \cite[Cor.~2.5]{FelshtynTroitskyCrelle}}]\label{lem:ignoreinner}
Let $G$ be a group and $\phee \in \Aut(G)$. Then the Reidemeister number of $\varphi$ is invariant under composition with inner automorphisms. That is,
\begin{equation}\label{eq:ReidInn} R(\iota_g \circ \phee) = R(\phee \circ \iota_g) = R(\phee), \quad \text{ for all } g \in G.
\end{equation}
\end{lem}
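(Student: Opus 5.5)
The plan is to build an explicit bijection between the Reidemeister classes of $\phee$ and those of the composed automorphism, by translating one of the twisted conjugacy relations into the other. I will treat $\phee \circ \iota_g$ and $\iota_g \circ \phee$ separately, though each step is a direct algebraic manipulation.

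\emph{First equality, $R(\phee\circ\iota_g) = R(\phee)$.} Write $\psi = \phee\circ\iota_g$, so that $\psi(x) = \phee(g)\phee(x)\phee(g)^{-1}$. I propose the map $\Theta\colon G \to G$, $\Theta(h) = h\,\phee(g)$, i.e.\ right multiplication by $\phee(g)$, which is a bijection of sets. The key check is that $\Theta$ carries $\psi$-classes to $\phee$-classes. Suppose $h' = x h \psi(x)^{-1} = x h \phee(g)\phee(x)^{-1}\phee(g)^{-1}$. Multiplying on the right by $\phee(g)$ gives
\[ h'\phee(g) = x\,\bigl(h\phee(g)\bigr)\,\phee(x)^{-1}, \]
so $\Theta(h')$ is $\phee$-conjugate to $\Theta(h)$. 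Reading the computation backwards gives the converse implication. Hence $\Theta$ induces a bijection between $\psi$-classes and $\phee$-classes, so $R(\phee\circ\iota_g)=R(\phee)$.

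\emph{Second equality, $R(\iota_g\circ\phee) = R(\phee)$.} Here I would avoid a new computation and reduce to the first case. Since $\phee\circ\iota_h = \iota_{\phee(h)}\circ\phee$, we can write $\iota_g\circ\phee = \phee\circ\iota_{\phee^{-1}(g)}$. Applying the first equality with $\phee^{-1}(g)$ in place of $g$ then gives $R(\iota_g\circ\phee) = R(\phee)$. Alternatively, right multiplication by $g$ gives the bijection directly: $h' = x h g\phee(x)^{-1}g^{-1}$ holds if and only if $h'g = x(hg)\phee(x)^{-1}$.

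\emph{Potential obstacle.} The only real danger is bookkeeping: choosing which side to multiply on, and checking that the relation converts exactly rather than only in one direction. The equivalence holds in both directions because right multiplication is invertible. Both sides may be infinite; this causes no difficulty, since a bijection of class sets preserves cardinality in $\Z_{>0}\cup\{\infty\}$.
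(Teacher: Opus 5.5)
Your proof is correct. Right multiplication by $\varphi(g)$ (respectively by $g$) turns the $\varphi\circ\iota_g$- (respectively $\iota_g\circ\varphi$-) twisted conjugacy relation into the $\varphi$-relation with the same conjugating element $x$, and the identity $\iota_g\circ\varphi=\varphi\circ\iota_{\varphi^{-1}(g)}$ is valid. The paper gives no proof of its own and only cites Felshtyn--Troitsky; the standard argument behind that result is this same right-translation bijection on Reidemeister classes.
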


\begin{lem}[{E.g., \cite[Lemma~1.1]{DacibergWongCrelle}}]\label{lem:quotient}
Let $G$ be a group with normal subgroup~$N$. Assume that $\phee \in \Aut(G)$ is such that $\phee(N) =N$. Then $\phee$ induces an automorphism $\barra{\phee}\in \Aut(G/N)$ and, if $R(\barra{\phee}) =\infty$, then also $R(\phee)=\infty$.
\end{lem}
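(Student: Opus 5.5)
The statement to prove is \Cref{lem:quotient}. The plan is to compare Reidemeister classes of $\phee$ on $G$ with those of $\barra{\phee}$ on $G/N$ through the canonical projection $\pi\colon G\to G/N$, and to show that every $\barra{\phee}$-class is hit by some $\phee$-class.

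First, since $\phee(N)=N$, the map $\barra{\phee}(gN):=\phee(g)N$ is well defined: if $gN=hN$ then $h^{-1}g\in N$, so $\phee(h)^{-1}\phee(g)\in N$. It is a homomorphism because $\phee$ is. It is bijective because $\phee^{-1}$ also preserves $N$ (apply $\phee^{-1}$ to $\phee(N)=N$), so $\phee^{-1}$ induces an inverse map. Hence $\barra{\phee}\in\Aut(G/N)$, and by construction $\pi\circ\phee=\barra{\phee}\circ\pi$.

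Next, $\pi$ maps $\phee$-classes into $\barra{\phee}$-classes. If $g=xh\phee(x)^{-1}$, then applying $\pi$ and using the intertwining relation gives
\[\pi(g)=\pi(x)\,\pi(h)\,\barra{\phee}(\pi(x))^{-1}.\]
So $\pi(g)$ and $\pi(h)$ are $\barra{\phee}$-conjugate, and $\pi$ induces a well-defined map from the set of Reidemeister classes of $\phee$ to that of $\barra{\phee}$.

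This induced map is surjective, since $\pi$ is surjective: the class of $gN$ is the image of the class of $g$. Therefore $R(\phee)\geq R(\barra{\phee})$. In particular, $R(\barra{\phee})=\infty$ forces $R(\phee)=\infty$.

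No step here is a genuine obstacle. The only point needing care is checking that $\barra{\phee}$ is an automorphism and not merely an endomorphism, which uses the equality $\phee(N)=N$ rather than just the inclusion $\phee(N)\subseteq N$.
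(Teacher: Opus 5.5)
Your proof is correct and is the standard argument. The paper gives no proof of its own and simply cites \cite[Lemma~1.1]{DacibergWongCrelle}, where the lemma is proved in the same way: the projection $G \onto G/N$ induces a surjection from the Reidemeister classes of $\phee$ onto those of $\barra{\phee}$, so $R(\phee) \geq R(\barra{\phee})$.
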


As a somewhat opposite result to the previous one, \cite[Lemma~2.9]{Heath} implies the following result, which allows us to show in certain situations that $R(\varphi)<\infty$ by considering a $\varphi$--invariant normal subgroup $N$.

\begin{lem}\label{lem:heath}
Let $G$ be a group with normal subgroup $N$. Assume that $\phee \in \Aut(G)$ is such that $\phee(N) =N$. Then $\phee$ induces an automorphism $\barra{\phee}\in \Aut(G/N)$. Denote by $\iota_g \circ \phee'\in \Aut(N)$ the restriction of $\iota_g \circ \phee$ to $N$ for every $g\in G$. If $R(\barra{\phee}) <\infty$ and also 
$R(\iota_g \circ \phee')<\infty$ for all $g \in G$, then $R(\varphi)< \infty$ as well.
\end{lem}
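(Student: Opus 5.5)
The plan is to bound the Reidemeister classes of $\phee$ by splitting them according to the fibres of the natural projection $\pi\colon G \to G/N$. Since $\pi\circ\phee = \barra{\phee}\circ\pi$, the map $\pi$ sends $\phee$-conjugacy classes of $G$ onto $\barra{\phee}$-conjugacy classes of $G/N$. So it is enough to show that each of the finitely many $\barra{\phee}$-classes, $R(\barra{\phee})<\infty$ of them, has only finitely many $\phee$-classes mapping onto it.

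First I fix a $\barra{\phee}$-class and choose a representative $x \in G$ whose image $\pi(x)$ lies in that class. Let $g\in G$ be any element whose image is $\barra{\phee}$-conjugate to $\pi(x)$, say $\pi(g) = \pi(y)\pi(x)\barra{\phee}(\pi(y))^{-1}$. Then $y^{-1} g \phee(y) \in xN = Nx$. Hence every $\phee$-class lying over the chosen $\barra{\phee}$-class meets the coset $Nx$, and it suffices to count the $\phee$-classes that meet $Nx$.

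Next I compare elements $nx$ and $mx$ with $n,m\in N$. The key step is the identity, valid for $y \in N$,
\[
y\,(nx)\,\phee(y)^{-1} \;=\; y\,n\,\bigl(x\phee(y)^{-1}x^{-1}\bigr)\,x \;=\; \bigl(y\, n\, \psi(y)^{-1}\bigr)\, x, \qquad \psi := (\iota_x\circ\phee)|_N .
\]
Here $\psi(y) = x\phee(y)x^{-1}$ lies in $N$ because $N$ is normal and $\phee(N)=N$, so $\psi = \iota_x\circ\phee'$ is an automorphism of $N$. The identity shows that if $n$ and $m$ are $\psi$-conjugate in $N$, then $nx$ and $mx$ are $\phee$-conjugate in $G$. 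Consequently the number of $\phee$-classes meeting $Nx$ is at most $R(\iota_x\circ\phee') < \infty$.

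Summing over the finitely many representatives $x$ then gives $R(\phee) \le \sum_x R(\iota_x\circ\phee') < \infty$. The only delicate point is the bookkeeping in the displayed identity: one must write coset elements as $nx$ rather than $xn$, so that the twisting automorphism on $N$ comes out as exactly $\iota_x\circ\phee'$, matching the hypothesis, rather than a variant such as $\phee\circ\iota_x$. Everything else is formal.
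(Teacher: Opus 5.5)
Your proof is correct and complete. The reduction $y^{-1}g\phee(y)\in Nx$ shows that every $\phee$-class lying over the $\barra{\phee}$-class of $xN$ meets $Nx$. The identity $y(nx)\phee(y)^{-1}=(y\,n\,\psi(y)^{-1})x$ for $y\in N$ shows that $n\mapsto[nx]_{\phee}$ is constant on $\psi$-classes of $N$. Together these give $R(\phee)\le\sum_x R(\iota_x\circ\phee')$, with $x$ running over representatives of the $\barra{\phee}$-classes.

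The paper gives no argument of its own. It only states that the lemma follows from \cite[Lemma~2.9]{Heath}, an external result relating the Reidemeister classes of $\phee$, of $\barra{\phee}$ and of the twisted restrictions $\iota_g\circ\phee'$ to $N$. The citation buys brevity, and possibly sharper information about the fibres of the map from $\phee$-classes to $\barra{\phee}$-classes. Your route is self-contained and elementary, and proves exactly the finiteness implication the paper needs in Section~7. It also shows the hypothesis is only needed for one representative $x$ of each $\barra{\phee}$-class, not for every $g\in G$.

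Your closing caveat is unnecessary: the choice of $nx$ over $xn$ is not delicate. Writing coset elements as $xn$ makes the twisting automorphism on $N$ equal to $(\phee\circ\iota_x)|_N=(\iota_{\phee(x)}\circ\phee)|_N$. This is also covered by the hypothesis, which quantifies over all $g\in G$. In any case it is conjugate to $(\iota_x\circ\phee)|_N$ via $\iota_x|_N\in\Aut(N)$, so the two have the same Reidemeister number.
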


For a group $G$ and $\phee\in\Aut(G)$, we denote by $\Fix(\phee)$ the set of fixed points of $G$ under~$\phee$, that is, 
\[\Fix(\phee)=\{g\in G \mid \phee(g)=g\}.\]
We shall need the following crucial observation.

\begin{lem}[{Jabara's trick \cite{Jabara}}] \label{lem:Jabara}
Let $G$ be a finitely generated, residually finite group, and let $\phee \in \Aut(G)$. If $|\Fix(\phee)|=\infty$,  then $R(\phee) = \infty$.
\end{lem}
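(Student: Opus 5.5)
The strategy is to show that the map $x \mapsto x\,\phee(x)^{-1}$ cannot be too surjective by exploiting finite $\phee$-invariant quotients. Throughout, fix $G$ finitely generated and residually finite, and $\phee\in\Aut(G)$ with $\Fix(\phee)$ infinite. Suppose, for contradiction, that $R(\phee)=r<\infty$.

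The first step is to produce, for every $m\in\N$, a characteristic (hence $\phee$-invariant) subgroup $K\le G$ of finite index such that $\Fix(\phee)$ has at least $m$ distinct images in $G/K$. Since $\Fix(\phee)$ is infinite, choose $m$ distinct fixed points $g_1,\dots,g_m$. By residual finiteness there is a finite-index normal subgroup $N$ avoiding the finitely many nontrivial elements $g_ig_j^{-1}$. Set
\[K=\bigcap\{H\le G : [G:H]=[G:N]\}.\]
Because $G$ is finitely generated, this is a finite intersection, so $K$ is characteristic of finite index and $K\le N$. The images $\bar g_i$ in $Q:=G/K$ are then pairwise distinct and all fixed by the induced automorphism $\bar\phee$. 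Hence $|\Fix(\bar\phee)|\ge m$.

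The second step is a counting argument in the finite group $Q$. By \Cref{lem:quotient}, or rather its underlying surjection (each $\phee$-class maps onto a union of $\bar\phee$-classes), we get $R(\bar\phee)\le R(\phee)=r$. In a finite group $Q$, the $\bar\phee$-twisted class of $1$ is the image of $x\mapsto x\bar\phee(x)^{-1}$. Since $x$ and $y$ have the same image exactly when $y^{-1}x\in\Fix(\bar\phee)$, this class has size $|Q|/|\Fix(\bar\phee)|$. More generally, the class of $h$ is the orbit of $h$ under $x\cdot h=xh\bar\phee(x)^{-1}$, with stabiliser $\Fix(\iota_{h}^{-1}\circ\bar\phee)$ up to the obvious identification. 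So one needs the standard fact that $R(\bar\phee)$ equals the number of $\bar\phee$-invariant conjugacy classes, or at least the inequality $R(\bar\phee)\ge |\Fix(\bar\phee)|/\,$something. The cleanest route is Burnside's lemma applied to the action of $Q$ on itself by $\bar\phee$-twisted conjugation:
\[R(\bar\phee)=\frac{1}{|Q|}\sum_{x\in Q}\bigl|\{h: xh\bar\phee(x)^{-1}=h\}\bigr|.\]
The term at $x=1$ contributes $|Q|$, so this gives only $R\ge1$. Instead, count stabilisers: the stabiliser of $h=1$ is $\Fix(\bar\phee)$, so the orbit of $1$ has size $|Q|/|\Fix(\bar\phee)|\le |Q|/m$. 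This bounds only one class, so the main obstacle is to control all classes uniformly. The step I would try is to translate: the class of $h$ corresponds to the class of $1$ for $\iota_{h^{-1}}\circ\bar\phee$. Its stabiliser is $\{x: x\bar\phee(x)^{-1}\ldots\}$, which is a twisted centraliser and need not contain $\Fix(\bar\phee)$.

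The obstacle is therefore genuine. Rather than bounding all classes, I would use the known identity that, for finite $Q$, $R(\bar\phee)$ equals the number of conjugacy classes of $Q$ fixed by $\bar\phee$. This follows from the Burnside computation combined with character theory, or by counting $\bar\phee$-invariant irreducible characters. Each element of $\Fix(\bar\phee)$ lies in a $\bar\phee$-fixed conjugacy class, but distinct fixed points may share a class, so this alone does not force growth either.

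My fallback, and what I would actually commit to, is to choose the $g_i$ more cleverly. Since $\gera{\Fix(\phee)}$ is an infinite subgroup, I would pick fixed points lying in pairwise distinct $G$-conjugacy classes, separated in a finite quotient. In a residually finite group, distinct conjugacy classes need not be separable, so here I would argue via the twisted-class-of-$1$ bound instead: $|Q|/m\ge$ the size of the largest class is impossible to guarantee. Thus I would combine the two ingredients as follows. Sum over the $r$ classes using the orbit–stabiliser formula, and show that each stabiliser $\{x: xh\bar\phee(x)^{-1}=h\}$, being $\Fix(\iota_{h^{-1}}\bar\phee)$, has size at least the number of $\bar\phee$-fixed conjugacy classes. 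That gives $|Q|\le r\cdot|Q|/m'$, where $m'\to\infty$, yielding the contradiction. I expect this uniform lower bound on twisted stabilisers to be the hardest step.
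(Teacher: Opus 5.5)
Your first step is correct. Since $G$ is finitely generated, $K$ is a characteristic subgroup of finite index. The $m$ fixed points stay distinct and fixed in $Q=G/K$. Also $R(\bar\phee)\le R(\phee)=r$, because twisted classes of $G$ map onto twisted classes of $Q$. (The paper gives no proof of this lemma; it only cites Jabara and Senden's Proposition~3.7.)

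\textbf{The gap.} The genuine gap is the finite-group step, which is the real content of the lemma. You need a bound $|\Fix(\bar\phee)|\le B(R(\bar\phee))$ that depends only on the Reidemeister number, and your final plan does not supply one, for two reasons.
\begin{enumerate}
\item It is circular. The number $m'$ of $\bar\phee$-invariant conjugacy classes equals $R(\bar\phee)\le r$ by the identity you quote. So ``$m'\to\infty$'' is exactly what has to be shown, and $|Q|\le r|Q|/m'$ only says $m'\le r$.
\item The proposed uniform lower bound on stabilisers is false. Take $Q=S_3$ and $\bar\phee=\id$. The stabiliser of a transposition is its centraliser, of order $2$, but $S_3$ has $3$ conjugacy classes.
\end{enumerate}
Note also that for $\bar\phee=\id$ the required bound says $|Q|$ is bounded in terms of its number of conjugacy classes. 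That is Landau's theorem, so an argument of at least that strength cannot be avoided.

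\textbf{The missing idea.} Do not bound every stabiliser; feed your orbit--stabiliser data into Landau's Egyptian-fraction lemma. Let $h_1=1,h_2,\dots,h_k$ represent the $k=R(\bar\phee)\le r$ twisted classes. The stabiliser of $h$ under $x\cdot h=xh\bar\phee(x)^{-1}$ is $\Fix(\iota_h\circ\bar\phee)$. Dividing the class equation by $|Q|$ gives
\[1=\sum_{i=1}^{k}\frac{1}{|\Fix(\iota_{h_i}\circ\bar\phee)|},\qquad \text{whose first term is } \frac{1}{|\Fix(\bar\phee)|}.\]
For each $k$ and each positive rational $q$, the equation $\sum_{i=1}^k 1/x_i=q$ has only finitely many solutions in positive integers. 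To see this, order $x_1\le\dots\le x_k$; then $x_1\le k/q$, and you induct on $k$. Taking the maximum over all $k\le r$ gives $|\Fix(\bar\phee)|\le B(r)$, where $B(r)$ depends only on $r$. Choosing $m>B(r)$ in your first step then yields the contradiction.
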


A detailed proof of Jabara's lemma can be found in \cite[Proposition~3.7]{PieterProducts}. 

In the case of nilpotent groups, we have the following converse to Jabara's result.

\begin{lem} \label{inverse-Jabara}
Let $N$ be a finitely generated torsion-free nilpotent group and $\varphi\in \Aut(N)$. Then $R(\varphi)< \infty$ if, and only if, $ \Fix(\varphi)=\{1\}$.
\end{lem}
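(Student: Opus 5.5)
One direction follows at once from \Cref{lem:Jabara}. Finitely generated nilpotent groups are residually finite. Hence if $\Fix(\varphi)\neq\{1\}$, torsion-freeness of $N$ makes $\Fix(\varphi)$ infinite: it is a subgroup containing some $g\neq 1$, and $g$ has infinite order. Jabara's trick then gives $R(\varphi)=\infty$. Taking the contrapositive, $R(\varphi)<\infty$ forces $\Fix(\varphi)=\{1\}$.

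For the converse, assume $\Fix(\varphi)=\{1\}$ and argue by induction on the Hirsch length of $N$, or on the nilpotency class, using a central series with torsion-free factors. Let $Z=Z(N)$ be the centre, or better its isolator in the upper central series. The upper central series $1=Z_0<Z_1<\cdots<Z_c=N$ of a torsion-free nilpotent group has torsion-free factors $Z_{i+1}/Z_i$, and each $Z_i$ is characteristic, hence $\varphi$-invariant. Each factor is free abelian of finite rank, and $\varphi$ induces an automorphism $\varphi_i\in\GL(Z_{i+1}/Z_i)$.

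The key claim is that $\Fix(\varphi)=\{1\}$ implies $\Fix(\varphi_i)$ is trivial on each factor. Equivalently, $1$ is not an eigenvalue of $\varphi_i$ on $(Z_{i+1}/Z_i)\otimes\Q$. Once this holds, $R(\varphi_i)=|\det(\varphi_i-\id)|<\infty$ for each abelian factor, because the Reidemeister classes of an automorphism of $\Z^k$ are the cosets of $\mathrm{Im}(\varphi_i-\id)$. An iterated application of \Cref{lem:heath} along the series, or the standard product formula for Reidemeister numbers over a characteristic series of a nilpotent group, then gives $R(\varphi)<\infty$. To pass through the extension $Z_i\to Z_{i+1}\to Z_{i+1}/Z_i$ in \Cref{lem:heath}, I also need finiteness of $R$ for the twisted restrictions $\iota_g\circ\varphi$. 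Since $\iota_g$ acts trivially on each central factor, $\iota_g\circ\varphi$ induces the same $\varphi_i$ on every factor, so the same eigenvalue condition covers them.

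The main obstacle is lifting fixed points: I must show that if some factor has a nontrivial fixed point, then $\varphi$ fixes a nontrivial element of $N$. I would work in the Mal'cev completion $N\otimes\Q$, or its Lie algebra $\mathfrak n$, where $\varphi$ extends to a unipotent-compatible automorphism $\log\varphi$-equivariantly. The eigenvalues of $\varphi$ on $\mathfrak n$ are exactly the union of the eigenvalues on the graded pieces $\mathfrak z_{i+1}/\mathfrak z_i$. If $1$ occurs among them, then the linear map $\varphi-\id$ on $\mathfrak n$ has a nonzero kernel, defined over $\Q$. Hence $\varphi$ fixes a nonzero $X\in\mathfrak n$, and therefore the element $\exp(X)\in N\otimes\Q$. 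Some power $\exp(X)^m=\exp(mX)$ lies in $N$, and it is fixed and nontrivial, which is the required contradiction.

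An elementary alternative avoids Lie algebras. One picks the largest $i$ with a nontrivial fixed coset $xZ_i$ and shows that $z\mapsto z^{-1}x^{-1}\varphi(x)\cdot\ldots$ can be solved in $Z_i$ up to passing to a power. That argument instead uses that $\varphi-\id$ is injective with finite cokernel on all lower factors, so fixed points lift rationally.
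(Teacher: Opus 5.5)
Your proposal is correct. The first direction is exactly the paper's: torsion-freeness makes a nontrivial $\Fix(\varphi)$ infinite, and \Cref{lem:Jabara} applies because finitely generated nilpotent groups are residually finite.

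For the converse the two routes differ. The paper proves nothing itself here; it quotes Senden's Proposition~3.14 in the form $R(\varphi)=\infty\Rightarrow|\Fix(\varphi)|=\infty$, reading $\Fix(\varphi)$ as $\mathrm{Stab}_\varphi(1)$. You instead give a self-contained argument built from four pieces:
\begin{enumerate}
\item the upper central series, whose factors are free abelian of finite rank;
\item the formula $R(\varphi_i)=|\det(\id-\varphi_i)|$ on each factor when $1$ is not an eigenvalue;
\item gluing via \Cref{lem:heath};
\item the Mal'cev completion, to lift a rational fixed vector to a nontrivial fixed element of $N$.
\end{enumerate}
In the gluing step you correctly note that $\iota_g$ acts trivially on central factors. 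The induction should therefore be phrased for all automorphisms preserving the series with no eigenvalue $1$ on the factors.

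What your route buys is independence from Senden's paper, plus the classical eigenvalue criterion for $R(\varphi)<\infty$. The price is Mal'cev theory used without proof. You need that $\varphi$ extends to $N\otimes\Q$. You need that $\log$ identifies $(Z_{i+1}/Z_i)\otimes\Q$ equivariantly with $\mathfrak z_{i+1}/\mathfrak z_i$. You also need that every element of $N\otimes\Q$ has a power in $N$.

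Your closing ``elementary alternative'' is garbled as written. In clean form it makes the Lie algebra unnecessary. Induct on the class with $Z=Z(N)$, taking the hypothesis $\Fix(\varphi)=\{1\}$ itself as the inductive condition.
\begin{itemize}
\item Then $\varphi|_Z$ is fixed-point-free, so $k=[Z:(\id-\varphi)(Z)]<\infty$.
\item If $\varphi(x)=xz$ with $z\in Z$, then $z^k=w\varphi(w)^{-1}$ for some $w\in Z$. Hence $x^kw$ is fixed by $\varphi$.
\item Since $N/Z$ is torsion-free, $x^kw\neq 1$ unless $x\in Z$.
\item So the induced automorphism of $N/Z$ is again fixed-point-free.
\item \Cref{lem:heath} with central $Z$ closes the induction, since $R(\iota_g\circ\varphi|_Z)=R(\varphi|_Z)=k$.
\end{itemize}
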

\begin{proof}
If $\Fix(\varphi)\neq \{1\}$, then $\Fix(\varphi)$ is infinite as $N$ is torsion-free and $R(\varphi) = \infty$ by Jabara's trick. On the other hand, if $R(\varphi)=\infty$, then $\Fix(\varphi)$ is infinite by \cite[Proposition 3.14]{SendenProduct} --- note that $\Fix(\phee) = \mathrm{Stab}_{\phee}(1)$ in Senden's notation.
\end{proof}

\subsection{Finiteness properties, dimension, and quasi-isometries} \label{sec:geometry}

Our main applications concern (coarse) geometric properties of our groups of interest, and we now recall some of these concepts and key results. Although these are classic, we spell them out for the reader's convenience, 
following known sources such as \cite{SerreCohomologie,Bieri,BrownCohomology,GeoBook}. 

We start with \emph{finiteness properties} (sometimes also known as \emph{finiteness conditions}) and the finiteness length. Informally speaking, the finiteness length $\phi(G)$ of a group $G$ can be thought of as a measure of `how compact' $G$ can be from a topological perspective, up to homotopy --- if one interprets classifying spaces as the topological way of depicting $G$. 

We say that a group $G$ has (homotopical) \emph{type $\Fn{n}$} if it admits a classifying space $K(G,1)$ whose $n$-skeleton is finite. These generalise familiar properties because type $\Fn{1}$ (resp. type $\Fn{2}$) is equivalent to being finitely generated (resp. finitely presented); cf. \cite[Proposition~7.2.1]{GeoBook}. Note that these properties are increasingly stronger since type $\Fn{n}$ implies type $\Fn{k}$ when $k \leq n$. In case $G$ is of type $\Fn{n}$ for all $n \geq 0$, we say that $G$ is of type $\Fn{\infty}$. The (homotopical) \emph{finiteness length} of $G$, denoted by $\phi(G)$, is then defined as
\[\phi(G) = \sup\{ n \in \Z_{\geq 0} \mid G \text{ is of type } \Fn{n}\}.\]
In particular, $\phi(G) = \infty$ if and only if $G$ is of type $\Fn{\infty}$, and $\phi(G) \geq 2$ if and only if $G$ is finitely presented. Of course, $\phi(G)=n <\infty$ if and only if $G$ is of type $\Fn{n}$ but not of type $\Fn{n+1}$.

A useful fact is that the finiteness length behaves well under group extensions.

\begin{lem}[{E.g., \cite[Lemma~2.1]{YuriSoluble}}] \label{lem:finpropextensions}
Let $N \into G \onto Q$ be a short exact sequence of groups. Then:
\begin{enumerate}
\item $\phi(N) \geq n \leq \phi(Q) \implies \phi(G) \geq n$.
\item If the sequence splits (i.e., $G \cong N \rtimes Q$), then $\phi(G) \leq \phi(Q)$.
\end{enumerate}
\end{lem}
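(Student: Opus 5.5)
We will prove the two items of \Cref{lem:finpropextensions} separately. The tool for the first is the standard construction of a classifying space for an extension out of classifying spaces for the kernel and the quotient. The tool for the second is a retraction.

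For part~(1), assume $N$ and $Q$ are of type $\Fn{n}$. Choose a $K(Q,1)$ complex $Y$ with finite $n$-skeleton, and a $K(N,1)$ complex $F$ with finite $n$-skeleton. We then build a complex $X$ fibred over $Y$ with fibre $F$. The plan is to proceed cell by cell over $Y$, following the classical argument in Geoghegan's book \cite[Theorem~7.1.10]{GeoBook}. Equivalently, one can pass to the universal covers: let $G$ act on $\widetilde{Y}\times\widetilde{F}$, or more precisely on a suitable model obtained from the Borel construction, and then apply Brown's criterion.

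The cleanest route is to cite Brown's criterion, or \cite[Theorem~7.1.10]{GeoBook}, in the following form. A group $G$ is of type $\Fn{n}$ if and only if it acts freely, cellularly and cocompactly on the $n$-skeleton of some contractible complex. Over each $p$-cell of $Y$ with $p\le n$, the total space carries a copy of $F$. The cells of $X$ of dimension $\le n$ are products of $p$-cells of $Y$ with $q$-cells of $F$ with $p+q\le n$, and there are only finitely many of these. The main obstacle is making the attaching maps of this homotopy-theoretic twisted product precise. We intend to avoid it by quoting the standard result that $\Fn{n}$ is closed under extensions, which is exactly this statement (see, e.g., \cite[Theorem~7.2.21]{GeoBook}).

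For part~(2), the splitting gives a retraction $r\colon G\to Q$ with $r\circ s=\id_Q$, where $s\colon Q\to G$ is the section. The plan is to show that a retract of a group of type $\Fn{k}$ is again of type $\Fn{k}$, for every $k\le\phi(G)$. Take a $K(G,1)$ complex $X$ with finite $k$-skeleton, and a $K(Q,1)$ complex $Y$. The maps $s$ and $r$ are realised by cellular maps $Y\to X\to Y$, and their composite is homotopic to the identity. This makes the homotopy type of $Y$ dominated, in dimensions up to $k$, by a complex with finite $k$-skeleton. By the domination criterion \cite[Proposition~7.2.2]{GeoBook}, we would then conclude that $Q$ is of type $\Fn{k}$.

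An alternative is the homological route. Since $r\circ s=\id_Q$, the functor $\mathrm{Ind}$ makes $\Z Q$-modules into retracts in a way compatible with the Bieri--Eckmann criterion on the commutation of $H_*$ with direct products. Either way, we obtain $\phi(Q)\ge\phi(G)$, which is the claim.
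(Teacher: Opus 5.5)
Your proposal is correct and follows the standard route behind the paper's citation of \cite[Lemma~2.1]{YuriSoluble}; the paper itself gives no proof. The first item is closure of type $\Fn{n}$ under extensions, and for the second your domination argument $Y^{(k)} \to X^{(k)} \to Y$ correctly shows that the retract $Q$ of $G$ inherits type $\Fn{k}$. The only inaccuracy is in your homological aside: the relevant functor is inflation along $r$, not induction (which need not commute with direct products), and that route only yields type $\FPn{k}$, so you would also need the fact that retracts of finitely presented groups are finitely presented to recover type $\Fn{k}$.
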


The finiteness length $\phi(G)$ can thus affect the structure of the (co)homology of $G$ since $H^\ast(G,-)$ and $H_\ast(G,-)$ may be computed out of a $K(G,1)$ space by building a projective resolution for the trivial $\Z[G]$-module $\Z$ out of a cell structure for $K(G,1)$; cf. \cite[Proposition~13.1.1]{GeoBook}. A related, alternative finiteness condition uses the length of such resolutions instead (\cite[Section~8.2]{GeoBook}). We say that $G$ has \emph{cohomological dimension} $n \in \Z_{\geq 0} \cup \{\infty\}$, denoted $\mathrm{cd}(G) = n$, when 
\[n = \sup\{d \in \Z_{\geq 0} \mid H^d(G,M) \neq 0 \text{ for some } \Z[G]\text{-module } M\}.\] 
Since $\mathrm{cd}(G) < \infty$ forces $G$ to be torsion-free \cite[p.~90]{SerreCohomologie}, a slight refinement of dimension is commonly used. A group $G$ is said to have \emph{virtual cohomological dimension} $\vcd(G) = n \in \Z_{\geq 0} \cup \{\infty\}$ when $G$ admits some torsion-free subgroup $H$ of finite index such that $\mathrm{cd}(H)=n$. (This concept does not depend on the choice of $H$; see \cite[Th\'eor\`eme~1, p.~96]{SerreCohomologie}. Here, the trivial group is considered torsion-free so that $\vcd(G)=0$ if and only if $G$ is finite.) In case no such subgroups exist, one has $\vcd(G) = \infty$.  

The virtual cohomological dimension behaves well with respect to subgroups. 

\begin{lem}[{E.g., \cite[p.~99]{SerreCohomologie}}] \label{lem:vcdsubgroups}
For a group $G$ and a subgroup $H \leq G$, one has $\vcd(H) \leq \vcd(G)$, with equality in case $H$ has finite index in $G$.
\end{lem}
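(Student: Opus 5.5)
We argue directly from the definition of $\vcd$ via a torsion-free subgroup of finite index, using Shapiro's lemma for the inequality and Serre's theorem for the equality.

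If $\vcd(G)=\infty$ there is nothing to prove. So assume $\vcd(G)=n<\infty$, and fix a torsion-free subgroup $G_0\leq G$ of finite index with $\mathrm{cd}(G_0)=n$. Put $H_0:=H\cap G_0$. Then $H_0$ is torsion-free, being a subgroup of $G_0$. It also has finite index in $H$, since $[H:H\cap G_0]\leq [G:G_0]$. Hence, by definition and the independence of the choice recalled above (\cite[Th\'eor\`eme~1, p.~96]{SerreCohomologie}), we have $\vcd(H)=\mathrm{cd}(H_0)$. It therefore suffices to prove $\mathrm{cd}(H_0)\leq \mathrm{cd}(G_0)$.

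For this inequality I would use Shapiro's lemma. For any $\Z[H_0]$-module $M$ and every $k$,
\[H^k(H_0,M)\cong H^k\bigl(G_0,\mathrm{Hom}_{\Z[H_0]}(\Z[G_0],M)\bigr).\]
The right-hand side vanishes for all $k>n$, because $\mathrm{cd}(G_0)=n$. Hence $H^k(H_0,M)=0$ for all $k>n$ and all $M$, giving $\mathrm{cd}(H_0)\leq n$ and thus $\vcd(H)\leq\vcd(G)$.

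Now suppose $[G:H]<\infty$. Then $H_0$ has finite index in $G_0$, so $H_0$ is a torsion-free finite-index subgroup of $G$ itself. By the independence of the choice of such a subgroup, $\mathrm{cd}(H_0)=\vcd(G)$, and so $\vcd(H)=\mathrm{cd}(H_0)=\vcd(G)$.

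The case $\vcd(G)=\infty$ with $[G:H]<\infty$ needs a separate check, since the statement asserts equality there as well. If $\vcd(H)$ were finite, pick a torsion-free $H_1\leq H$ of finite index with $\mathrm{cd}(H_1)<\infty$. Then $H_1$ has finite index in $G$, so $\vcd(G)=\mathrm{cd}(H_1)<\infty$, a contradiction. Hence $\vcd(H)=\infty=\vcd(G)$.

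The only non-formal ingredient, and the step I expect to be the main obstacle, is the independence of $\vcd$ from the chosen torsion-free finite-index subgroup. This is Serre's theorem that a torsion-free group and any finite-index subgroup have the same cohomological dimension. I would cite it from \cite{SerreCohomologie} rather than reprove it, since its proof genuinely uses torsion-freeness via Serre's induction argument.
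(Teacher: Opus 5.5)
Your proof is correct, including the separate handling of the case $[G:H]<\infty$ with $\vcd(G)=\infty$. The paper gives no argument of its own and only cites Serre; your route is the standard one behind that reference: monotonicity of $\mathrm{cd}$ under passage to subgroups via Shapiro's lemma with the coinduced module, plus Serre's finite-index theorem for torsion-free groups to move between $\vcd$ and $\mathrm{cd}$.
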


Besides the above cohomological implications, the finiteness length $\phi(G)$ is also prominent for being a quasi-isometry invariant \cite{AlonsoFPnQI}. Recall that two metric spaces $(X, d_1)$ and $(Y, d_2)$ are \emph{quasi-isometric} to one another if there exist positive constants $L$, $C \in \R_{>0}$, a non-negative constant $D \in \R_{\geq 0}$, and a function $f : X \to Y$ such that 
\begin{itemize}
    \item $\frac{1}{L}d_1(p,q) - C \leq d_2(f(p),f(q)) \leq Ld_1(p,q)+C$ for all $p,q \in X$, and 
    \item for each $y \in Y$, there is some $x \in X$ with $d_2(f(x),y) \leq D$.
\end{itemize}
Being quasi-isometric is an equivalence relation. 
Recalling common practice in geometric group theory, a finitely generated group $G$ is viewed as a metric space by choosing a Cayley graph for it with the word metric. (Different generating sets yield quasi-isometric Cayley graphs.) 

The reader not so acquainted with property~$R_\infty$ should be warned that this property in general does not behave well under quasi-isometries --- for instance, for $\Z$ we have $R(-\id)=2$, whereas the virtually-$\Z$ group $D_\infty \cong C_2\ast C_2$ actually has~$R_\infty$; cf. \cite[Proposition~2.3]{DacibergWongCrelle}.
For finiteness properties, however, we have the following.

\begin{lem}[{E.g. \cite[Theorem~18.2.12]{GeoBook}}] \label{lem:FinLenQI}
If $G$ and $H$ are finitely generated quasi-isometric groups, then $\phi(G) = \phi(H)$.
\end{lem}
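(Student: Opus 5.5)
The plan is to reduce the statement to the quasi-isometry invariance of finite presentability properties in their homotopical form, as established by Alonso \cite{AlonsoFPnQI}. Suppose $G$ and $H$ are finitely generated and quasi-isometric. It suffices to show that if $G$ is of type $\Fn{n}$, then so is $H$. By symmetry this gives $\phi(G) = \phi(H)$, and the case $\phi = \infty$ follows by letting $n$ vary. The case $n = 1$ is immediate, since both groups are finitely generated by hypothesis.

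To work with type $\Fn{n}$ coarsely, I would use Brown's criterion in its geometric form. A group $G$ is of type $\Fn{n}$ if and only if it acts freely, cellularly and cocompactly on some $(n-1)$-connected complex. Equivalently, after fixing a Cayley graph and a filtration by Rips complexes $P_r(G)$, $G$ is of type $\Fn{n}$ if and only if, for every $r$, there is some $r' \geq r$ such that the map $\pi_k(P_r(G)) \to \pi_k(P_{r'}(G))$ is trivial for all $k \leq n-1$. I will call this condition \emph{essential $(n-1)$-connectedness}.

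Now fix a quasi-isometry $f\colon G \to H$ with quasi-inverse $g$. Both maps are coarse Lipschitz, so for each $r$ they induce simplicial maps $f_*\colon P_r(G) \to P_{Lr+C}(H)$ and $g_*$ in the reverse direction. The composite $g_* f_*$ moves every vertex a bounded distance, so inside a Rips complex of large enough parameter it is contiguous to the inclusion, and hence homotopic to it.

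The transfer argument then runs as follows. Take a spherical cycle in $P_r(H)$ of dimension $k \leq n-1$. Push it to $G$ via $g_*$, kill it there using essential connectedness of $G$, and push the resulting null-homotopy back to $H$ via $f_*$. Finally, use the contiguity homotopy to connect $f_* g_*$ of the cycle with the original cycle, at the cost of enlarging the Rips parameter once more. This shows that $H$ is essentially $(n-1)$-connected, and hence of type $\Fn{n}$.

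The main obstacle is the homotopy-group bookkeeping in low degrees, together with the precise equivalence between type $\Fn{n}$ and essential connectedness of the Rips filtration. The latter requires the Rips complexes to be cocompact free $G$-complexes, which holds because $G$ is finitely generated and the balls in $G$ are finite. I would cite Brown's criterion for this rather than reprove it, and then carry out the transfer argument above.
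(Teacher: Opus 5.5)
The paper gives no proof of this lemma; it quotes it from \cite[Theorem~18.2.12]{GeoBook}, going back to Alonso \cite{AlonsoFPnQI}. Your outline is the standard argument behind that citation, and the coarse transfer step is sound. The upper quasi-isometry bounds make $f_*$ and $g_*$ well defined between Rips complexes. The composite $f_*g_*$ is contiguous to the inclusion $P_r(H)\hookrightarrow P_R(H)$ once $R$ is large. The final parameter $\max\{R, Lr'+C\}$ depends only on $r$, which is exactly what essential $(n-1)$-connectedness requires. The low-degree bookkeeping you worry about is harmless: a sphere that bounds a disc is trivial in $\pi_k$ at any basepoint in its image, and $P_r(G)$ is connected for $r\geq 1$.

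What is wrong as written is your justification of the Rips-complex form of Brown's criterion. The action of $G$ on $P_r(G)$ is free on vertices but not on simplices when $G$ has torsion. A finite subgroup $F\leq G$ is itself a simplex of $P_r(G)$ once $r\geq \mathrm{diam}(F)$, and $F$ stabilises it. Finiteness of balls gives cocompactness, not freeness. This matters here, because the paper applies the lemma to groups with torsion, e.g.\ $d_2(-1)\in\mbA_n(R_S)$ or the $p$-torsion in $\Gamma(n,\ell,p)$.

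The repair is to use Brown's criterion in its general form. Suppose $X$ is an $(n-1)$-connected $G$-CW complex in which the stabiliser of each $p$-cell is of type $\Fn{n-p}$, and $(X_j)$ is a filtration by $G$-subcomplexes with cocompact $n$-skeleta. Then $G$ is of type $\Fn{n}$ if and only if $(X_j)$ is essentially $(n-1)$-connected. Apply this to the barycentric subdivision of the full simplex on the vertex set $G$, which is contractible, filtered by the $P_r(G)$. Cell stabilisers are finite, since $g\sigma=\sigma$ forces $g\in\sigma\sigma^{-1}$, and hence are of type $\Fn{\infty}$. With this replacement your argument proves the lemma.
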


Likewise, the virtual cohomological dimension can also be used to distinguish groups up to quasi-isometry.

\begin{lem}[{E.g., \cite[Theorem~1.2(i)]{RomanQI}}] \label{lem:vcdQI}
Let $G$ and $H$ be countable groups. If they are quasi-isometric with both $\vcd(G)$ and $\vcd(H)$ finite, then $\vcd(G) = \vcd(H)$. In case $\vcd(G)=\infty$ but $\vcd(H)<\infty$, then $G$ and $H$ cannot be quasi-isometric. 
\end{lem}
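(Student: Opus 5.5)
Since the paper quotes this from Sauer \cite[Theorem~1.2(i)]{RomanQI}, I would reconstruct his coarse-cohomological argument rather than look for a new one.

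\emph{Reduction.} By \Cref{lem:vcdsubgroups}, a torsion-free subgroup of finite index is quasi-isometric to the ambient group (for countable groups, with a proper left-invariant metric) and has the same $\vcd$. So in the first assertion I may replace $G$ and $H$ by torsion-free finite-index subgroups. This reduces the claim to: quasi-isometric countable groups of finite cohomological dimension have equal $\mathrm{cd}$.

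\emph{Main step: cohomological dimension is coarsely detectable.} If $\mathrm{cd}(G)=n<\infty$, then $H^n(G,F)\neq 0$ for some free $\Z[G]$-module $F$. I would then show that the top degree can be detected by coefficients of the form $\ell^\infty(G,A)$, or more generally by modules of bounded-support functions built from the metric on $G$. The identity I aim for is
\[
\mathrm{cd}(G)=\sup\{d \mid H^d(G,\mathcal{M})\neq 0\},
\]
where $\mathcal{M}$ ranges over such ``coarse'' induced modules. I would then identify $H^\ast(G,\mathcal{M})$ with a uniformly finite or coarse cohomology of the metric space $G$; this uses Shapiro-type arguments and the fact that $G$ acts freely and properly on itself. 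A quasi-isometry $G\to H$ induces isomorphisms of these coarse cohomology groups, with the quasi-inverse giving the reverse map, so the suprema agree. For countable groups that are not finitely generated, the same argument runs with a proper left-invariant metric, which is unique up to coarse equivalence.

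\emph{Second assertion, and the expected obstacle.} Suppose $\vcd(G)=\infty$ and $\vcd(H)=m<\infty$, and pass to a torsion-free finite-index $H_0\le H$. The naive argument via the coarse invariant above only gives a top-degree vanishing, $H^d_{\mathrm{coarse}}(G)=0$ for $d>m$. This forces $\mathrm{cd}(G)\le m$ only when $G$ has a torsion-free finite-index subgroup. The obstacle is therefore the case where $G$ has no such subgroup, for instance because of unbounded or infinitely many torsion conjugacy classes. I would first try to exploit the quasi-isometry to obtain a topological coupling of $G$ and $H_0$, in the sense of Gromov, with free $H_0$-action. From this I would try to build a proper $G$-action on a contractible complex of dimension $m$ with finite stabilisers. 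Bounding the stabiliser orders should then produce a finite-index torsion-free subgroup, and hence finite $\vcd$, a contradiction. This last step is where I expect the real difficulty, and if it does not go through I would defer to Sauer's treatment in \cite{RomanQI}.
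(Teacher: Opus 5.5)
The paper gives no argument for this lemma beyond the citation of Sauer. For the first sentence your plan matches what that citation provides. You pass to torsion-free subgroups of finite index, which are quasi-isometric to the ambient groups and have the same $\vcd$ by \Cref{lem:vcdsubgroups}, and then use quasi-isometry invariance of \emph{finite} cohomological dimension. Your coarse-cohomology sketch is acceptable as an outline of Sauer's argument, but keep in mind that it runs through the top degree. If $\mathrm{cd}(\Lambda)=d<\infty$, then $H^{d}(\Lambda,-)$ is right exact, and that is what allows nonvanishing with free coefficients to be transported to the coarse, coupling-induced coefficients. Accordingly, Sauer's result is an inequality $\mathrm{cd}(\Lambda)\le\mathrm{cd}(\Gamma)$ for a coarse embedding $\Lambda\to\Gamma$, proved under the hypothesis $\mathrm{cd}(\Lambda)<\infty$.

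The second sentence is where your proposal has a genuine gap, in two ways. Throughout, let $G_0\le G$ and $H_0\le H$ be torsion-free of finite index.

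\emph{Virtually torsion-free $G$.} You claim this case is handled, but it is not. If $\mathrm{cd}(G_0)=\infty$ there is no top degree, so your detection identity was never established for $G_0$. Vanishing of the coarse groups above degree $m$ therefore gives no bound on $\mathrm{cd}(G_0)$. Compare Thompson's group $F$: it is torsion-free of type $\FPn{\infty}$ with $\mathrm{cd}(F)=\infty$, yet $H^\ast(F;\Z[F])=0$. What does work, and suffices for the groups $\Gamma(n,\ell,0)$ of \Cref{corollary:Abels2}, is to exhibit subgroups of $G_0$ of unbounded finite dimension. If $\Z^k\le G_0$ for all $k$, then $\Z^k$ coarsely embeds in $H_0$, so $k\le\mathrm{cd}(H_0)$ for every $k$, which is absurd.

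\emph{$G$ with no torsion-free subgroup of finite index.} In this case, which you single out, no argument can work, because with the paper's convention $\vcd(G)=\infty$ the assertion is false. Let $\tilde\Gamma$ be the preimage of $\mathrm{Sp}_{2n}(\Z)$, $n\ge 2$, in the universal cover of $\mathrm{Sp}_{2n}(\R)$. By Deligne, every finite-index subgroup of this central extension by $\Z$ contains $2\Z$. Set $E=\tilde\Gamma/4\Z$, so that $1\to\Z/4\to E\to\mathrm{Sp}_{2n}(\Z)\to 1$. Then $E$ is finitely presented and, having finite kernel over $\mathrm{Sp}_{2n}(\Z)$, is quasi-isometric to it; and $\mathrm{Sp}_{2n}(\Z)$ has finite $\vcd$. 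Yet every finite-index subgroup of $E$ contains the central subgroup $2\Z/4\Z$ of order two, so $\vcd(E)=\infty$.

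The same $E$ refutes your intermediate step directly. It acts properly on the contractible symmetric space of $\mathrm{Sp}_{2n}(\R)$ with finite stabilisers of uniformly bounded order, but it has no torsion-free subgroup of finite index. Deferring to Sauer does not rescue this, since his theorem assumes finite cohomological dimension. The second sentence of the lemma needs an extra hypothesis, such as the one in the virtually torsion-free case above.
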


\begin{exm} \label{ex:AbelianFinfty}
Recalling that the $n$-torus $\prod_{j=1}^n \mathbb{S}^1$ is a classifying space for the free abelian group $\Z^n$, we have that $\vcd(A)=n$ and $\phi(A) = \infty$ for any finitely generated abelian group $A$ of torsion-free rank $n$. Inducting on the nilpotency class (resp. on the Hirsch length) and using \Cref{lem:finpropextensions}, one readily checks that a finitely generated nilpotent (resp. polycyclic) group $G$ always has $\phi(G)=\infty$.
\end{exm}

Although we mostly use the finiteness length throughout the paper, we stress that the homotopical properties $\Fn{n}$ have homological counterparts which shall appear sporadically here. A group $G$ is of \emph{type $\FPn{n}$} when $\Z$, viewed as a $\Z[G]$-module with trivial $G$-action, has a projective resolution 
\[\cdots \to P_i \to P_{i-1} \to \cdots \to P_1 \to P_0 \to \Z\]
by $\Z[G]$-modules $\{P_i\}_{i \geq 0}$ whose terms $P_i$ are finitely generated at least up to index $n$. As in the homotopical case, $G$ is of \emph{type $\FPn{\infty}$} if it is of type $\FPn{n}$ for all $n \geq 0$. These homological properties are also quasi-isometric invariants \cite{AlonsoFPnQI}. A further refined finiteness condition we shall encounter is as follows: $G$ is said to be of \emph{type $\FPn{}$} when the trivial $\Z[G]$-module $\Z$ has a projective resolution $\{P_i\}_{i \geq 0}$ all of whose modules $P_i$ are finitely generated and that terminates at some finite index $n\geq 0$ (i.e., $P_j \cong 0$ for all $j \geq n$). Taking dimension into account, one has the following.

\begin{lem}[{E.g., \cite[Chapter~VIII, (6.1) Proposition]{BrownCohomology}}] \label{lem:FP}
A group $G$ is of type $\FPn{}$ if and only if both $\mathrm{cd}(G)<\infty$ and $G$ is of type $\FPn{\infty}$.
\end{lem}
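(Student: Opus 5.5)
One direction is immediate. If $G$ is of type $\FPn{}$, we take a finite-length resolution $0 \to P_n \to \cdots \to P_0 \to \Z$ by finitely generated projective $\Z[G]$-modules. It is in particular a projective resolution whose terms are finitely generated in every degree, so $G$ is of type $\FPn{\infty}$. Moreover, $H^d(G,M)$ can be computed from this resolution and hence vanishes for $d>n$ and every module $M$, so $\mathrm{cd}(G)\le n<\infty$.

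For the converse, assume $\mathrm{cd}(G)=n<\infty$ and $G$ is of type $\FPn{\infty}$.
\begin{enumerate}
\item Choose a projective resolution $\cdots\to P_1\to P_0\to\Z$ with every $P_i$ finitely generated; this is possible by type $\FPn{\infty}$.
\item Set $K=\ker(P_{n-1}\to P_{n-2})$, the image of $P_n$, and truncate to $0\to K\to P_{n-1}\to\cdots\to P_0\to\Z$.
\item Show $K$ is projective. Run dimension shifting along the truncated sequence: for every module $M$ one gets $\mathrm{Ext}^1_{\Z[G]}(K,M)\cong H^{n+1}(G,M)$, which is $0$ since $\mathrm{cd}(G)=n$. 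So $K$ is projective.
\item Show $K$ is finitely generated. It is the image of the finitely generated module $P_n$.
\end{enumerate}
This yields a finite-length resolution by finitely generated projectives, so $G$ is of type $\FPn{}$.

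The only point needing care is step~3, that $\mathrm{Ext}^1(K,-)=0$ forces $K$ to be projective. This is the standard characterisation: take a surjection from a free module onto $K$; vanishing of $\mathrm{Ext}^1(K,-)$ on its kernel makes the resulting short exact sequence split. Note also that $\FPn{}$ asks only for finitely generated projectives, not free modules, so no Eilenberg swindle is needed. The edge case $n=0$, where $G$ is trivial, is handled directly.
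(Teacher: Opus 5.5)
Your proof is correct and is essentially the standard argument behind the result the paper cites. The paper gives no proof of its own and defers to Brown, Chapter~VIII, Proposition~6.1, where one likewise truncates a finite-type projective resolution at degree $n=\mathrm{cd}(G)$, uses dimension shifting (Brown's Lemma~VIII.2.1) to see that the kernel is projective, and observes that it is finitely generated as the image of $P_n$. With the convention $P_{-1}=\Z$, the case $n=1$ goes through as written, alongside your separate treatment of $n=0$.
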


\subsection{The groups \texorpdfstring{$\mbS_n(R)$}{SIn(R)}}\label{sec:SI}
Throughout, let $R$ be an integral domain and $n\geq 2$.
Consider the group of upper triangular matrices 
\[ \mbB_n(R) = \left( \begin{smallmatrix} * & * & \cdots & * \\  & * & \cdots & * \\ & & \ddots & \vdots \\ & & & * \\  \end{smallmatrix} \right)\leq \GL_n(R), \] 
and the group of upper unitriangular matrices
\[ \mbU_n(R) = \left( \begin{smallmatrix} 1 &  * & * & \cdots & * \\  & 1 & *&  \cdots &* \\[-1mm] & &\ddots & \ddots &  \vdots \\  & & & 1 & * \\
& & & & 1 \\  \end{smallmatrix} \right) \leq \GL_n(R).\] 
For each $i \in \{1,2,\ldots, n\}$ we let $\sbgpdi(R) \leq \GL_n(R)$ denote the subgroup of diagonal matrices whose $i$-th entry is an element of the unit group $U(R)$ and all other diagonal entries equal one. Note that the subgroup $\mbD_n(R)$ of diagonal matrices in $\mbB_n(R)$ is given by 
\[ \mbD_n(R) = \left( \begin{smallmatrix} * & &  &  \\  & * & & \\[-1mm] & & \ddots &  \\ & & & * \\  \end{smallmatrix} \right) = \mc{D}_1(R) \cdot \mc{D}_2(R) \cdot \ldots \cdot \mc{D}_n(R).\] 
An important quotient of $\mbB_n(R)$ is its projective version. 
As we only work with integral domains, the set $Z_n(R)$ of all multiples of the identity matrix $\mb{1}_n$, i.e. 
\[Z_n(R) = \set{u\cdot \mb{1}_n \in \GL_n(R) \mid u \in U(R)} \subseteq \mbD_n(R),\]
is always a central subgroup of $\mbB_n(R)$. 
The \emph{projective upper triangular group} $\PB_n(R)$ is then defined as the quotient 
\[\PB_n(R) = \frac{\mbB_n(R)}{Z_n(R)} \cong \mbU_n(R) \rtimes \frac{\mbD_n(R)}{Z_n(R)}.\]
Note that, if the group of units $U(R)$ of $R$ has only the trivial unit $1 \in R$, then $\PB_n(R) = \mbB_n(R) = \mbU_n(R)$.

We modify the diagonal of $\mbB_n(R)$ to construct families of groups that answer the questions that motivated our work. Given $n \in \N$, let $I \subseteq \{1, \dots, n\}$.  
We then denote by $\mbS_n(R)$ the subgroup of $\GL_n(R)$ composed of all upper triangular matrices with the following property: only the diagonal entries indexed by elements of $I$ can vary in $U(R)$, all others are fixed to be one. 

Put differently, consider the diagonal subgroup 
\[\TI_n(R)=\prod_{i \in I} \sbgpdi(R)\leq \mbD_n(R),\]
that is, $\TI_n(R)$ is the group of diagonal matrices where only the diagonal entries indexed by elements of $I$ can vary in $U(R)$, and all diagonal entries index by $I^c$ are fixed to be one. 
We then have that 
\[\mbS_n(R)=\mbU_n(R)\rtimes \TI_n(R).\]

By definition, $\mathbf{S}^{\{1, \dots, n\}}_n(R)=\mbB_n(R)$, whereas $\mbf{S}_n^{\vazio}(R) = \mbU_n(R)$. In turn, if $I$ is a proper subset of $\set{1,\ldots,n}$, then $\mbS_n(R)$ can be canonically identified with a 
subgroup of $\PB_n(R)$. Indeed, writing $[d]$ for the image in $\PB_n(R)$ of a diagonal matrix $d \in \mbB_n(R)$, we see that two elements $d_1, d_2 \in \TI_n(R)$ represent the same class $[d_1] = [d_2]$ in $\PB_n(R)$ if and only if $d_1d_2^{-1} = u\cdot \mb{1}_n$ for some $u \in U(R)$. Since $I \subsetneq \set{1,\ldots,n}$, at least one diagonal entry of $d_1d_2^{-1}$ equals $1$, whence $u = 1$ and so $d_1 = d_2$. Thus, the canonical map $\mbf{u}d \mapsto \mbf{u}[d]$ sending $\mbf{u} \in \mbU_n(R)$, $d \in \TI_n(R)$ to their images in $\mbU_n(R) \rtimes \left( \mbD_n(R)/Z_n(R)\right) \cong \PB_n(R)$ is an isomorphism onto its image. As a by-product of this argument, we obtain the following. 

\begin{lem} \label{lem:isowithPBn}
If the subset $I \subset \{1,\ldots,n\}$ has exactly $n-1$ elements, then $\mbS_n(R) \cong \PB_n(R)$.
\end{lem}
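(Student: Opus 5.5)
The argument is already sketched in the paragraph preceding the statement, so the plan is to make it precise. Fix $I \subseteq \{1,\ldots,n\}$ with $|I| = n-1$, say $I = \{1,\ldots,n\}\setminus\{k\}$. Consider the composite homomorphism
\[\Psi\colon \mbS_n(R) = \mbU_n(R)\rtimes \TI_n(R) \into \mbB_n(R) \onto \PB_n(R) = \mbB_n(R)/Z_n(R),\]
given by restricting the canonical projection. It is a homomorphism, so it remains to show that it is injective and surjective.

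\emph{Injectivity.} Let $g \in \mbS_n(R)$ with $\Psi(g)$ trivial. Then $g = u\cdot\mb{1}_n$ for some $u \in U(R)$. Since the $k$-th diagonal entry of every element of $\mbS_n(R)$ equals $1$ (because $k \notin I$), we get $u = 1$, and hence $g = \mb{1}_n$. This is exactly the computation in the text, where the same reasoning shows that $[d_1]=[d_2]$ forces $d_1 = d_2$ on $\TI_n(R)$. Since $\mbU_n(R)\cap Z_n(R)$ is trivial as well, the kernel of $\Psi$ is trivial.

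\emph{Surjectivity.} This is the step that uses $|I| = n-1$. Take any $b \in \mbB_n(R)$ and let $u \in U(R)$ be its $k$-th diagonal entry. Then $b' := u^{-1}\mb{1}_n\cdot b$ is still upper triangular with unit diagonal entries, and its $k$-th diagonal entry is $1$. All other diagonal entries are unconstrained units, which is allowed precisely because every index other than $k$ lies in $I$. Hence $b' \in \mbS_n(R)$, and $\Psi(b') = [b]$. So $\Psi$ is surjective and therefore an isomorphism $\mbS_n(R)\cong\PB_n(R)$.

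Neither step presents a real obstacle. The only point needing care is surjectivity, where one must observe that rescaling by the central element $u^{-1}\mb{1}_n$ normalises the single excluded diagonal entry without leaving the group of upper triangular matrices over the integral domain $R$.
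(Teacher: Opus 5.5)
Your proof is correct and follows the paper's argument: injectivity comes from the observation that the excluded index $k \notin I$ forces any scalar matrix in $\mbS_n(R)$ to be trivial, exactly as in the paragraph preceding the lemma. Your explicit surjectivity step, rescaling by the central matrix $u^{-1}\mb{1}_n$, supplies the part the paper leaves implicit when it calls the lemma a by-product of that argument.
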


We briefly remark that the family $\{ \mbS_n \mid I \subseteq \{1,\ldots,n\}\}$ could have been defined and interpreted in the language of group schemes (in our case, affine and defined over $\Z$). The schemes $\mbS_n$ can be thought of as the $\Z$-group subschemes of $\GL_n$ that interpolate between the standard Borel subgroup $\mbB_n \leq \GL_n$ and its unipotent radical $\mbU_n \leq \mbB_n$. At the level of $R$-rational points, the family 
\[\{ \mbS_n(R) \mid n \geq 2, \, I \subseteq \{1,\ldots,n\}\}\] 
effectively generalises $\mbU_n(R)$, $\mbB_n(R)$ and $\PB_n(R)$ simultaneously, but also the group $\Aff(R) \cong \left( \begin{smallmatrix} \ast & \ast \\ 0 & 1 \end{smallmatrix} \right) \leq \GL_2(R)$ of affine transformations of $R$ (since $\Aff(R) \cong \PB_2(R)$; cf. \cite[Lemma~3.6]{Bn1}).

\subsection{Useful properties of \texorpdfstring{$\PB_n(R)$}{PBn(R)} and \texorpdfstring{$\mbS_n(R)$}{SIn(R)}} \label{sec:allrelations}
There is a well-known series of relations among diagonal and unipotent matrices in the general linear group; see e.g.,\cite{Silvester}. We recall some of them below.

Recall that $\mbD_n(R)$ is the subgroup of $\GL_n(R)$ composed of diagonal matrices, and we let $d_i(u)$ denote the diagonal matrix where the $i$-th entry is equal to $u\in U(R)$ and has ones elsewhere on the diagonal. (In particular, $\sbgpdi(R) = \{ d_i(u) \mid u \in U(R)\}$.)  
As we only work over integral domains, the diagonal subgroup is abelian and we thus have relations on $\mbD_n(R)$ and on $\TI_n(R)$ given by 
\begin{align}\label{rel:commutatorsD}
\begin{split}
\di(u) \di(v) & = \di(uv), \\
\di(u)\dk{j}(v) & = d_j(v) \di(u). 
\end{split}
\end{align}
In the above, if we consider $d_i(u)$ as an element of $\TI_n(R)$, we obviously assume $u=1$ whenever $i\notin I$.

Denote by $e_{i,j}(r)$ the matrix of $\mbU_n(R)$ whose off-diagonal $(i,j)$-entry is $r\in R$ and all other off-diagonal entries are zero. The relations in $\mbU_n(R)$ are
\begin{align}\label{rel:commutatorsU}
\begin{split}
\eij(r)\eij(s) &= \eij(r+s), \\
[\eij(r),\ekl{k,l}(s)]^{-1} &= [\eij(r),\ekl{k,l}(s)^{-1}], \text{ and } \\
[\eij(r),\ekl{k,l}(s)] & = 
\begin{cases}
\ekl{i,l}(rs) & \mbox{if } j=k,\\
1 & \mbox{if } i \neq l \text{ and } k \neq j.
\end{cases}
\end{split}
\end{align}
The groups $\mbD_n(R)$ and $\TI_n(R)$ act on $\mbU_n(R)$ as follows.
\begin{align}\label{rel:commutators}\di(u) \ekl{k,l}(r) \di(u)^{-1} & = 
\begin{cases}
\ekl{k,l}(ur) & \mbox{if } i=k,\\
\ekl{k,l}(u^{-1}r) & \mbox{if } i=l, \\
{\ekl{k,l}(r)} & \mbox{otherwise}.
\end{cases}\end{align}
In particular, if $d = d_1(u_1) \cdots d_n(u_n) $ is an element of $\mbD_n(R)$ or of $\TI_n(R)$, one has 
\begin{equation}\label{conjdiag} d \eij(r) d^{-1} = \eij(u_i u_j^{-1} r). \end{equation}
Relations entirely analogous to the above obviously hold in $\PB_n(R) \cong \mbU_n(R) \rtimes \frac{\mbD_n(R)}{Z_n(R)}$, where we write (representatives of) diagonal matrices $d_i(u)$ as $[d_i(u)]$ instead.

In the present paper we are mainly concerned with {countable} groups. It is precisely the condition of being finitely generated that will impose a combinatorial constraint on the diagonal part $\TI_n(R)$ of our groups $\mbS_n(R) = \mbU_n(R) \rtimes \TI_n(R)$. 
We thus record the following, whose proof is adapted from \cite[Proof of Theorem~1.2 (Items~1 \&~2)]{YuriSoluble}.

\begin{pps}\label{pps:SIfinitelygenerated} Let $R$ be a finitely generated commutative ring with unity and let $I \subseteq \{1,...,n\}$. Then the matrix group $\mbS_n(R)$ is finitely generated if, and only if, one (possibly both) of the following conditions is satisfied.
\begin{enumerate}
\item The underlying additive group $(R,+)$ is finitely generated. 
\item The multiplicative group $(U(R),\cdot)$ is finitely generated, the additive group $(R,+)$ is finitely generated as a module over the units $U(R)$, and furthermore the set $I$ satisfies the (NG)-condition~\eqref{conditionI}.
\end{enumerate}
\end{pps}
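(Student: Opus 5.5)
We prove the two directions separately, working with the decomposition $\mbS_n(R) = \mbU_n(R) \rtimes \TI_n(R)$ and the relations \eqref{rel:commutatorsU} and \eqref{conjdiag}.

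\emph{Sufficiency.} Under condition~(1), $(R,+)$ is finitely generated. Then $\mbU_n(R)$ is generated by the finitely many $\eij(r)$ with $r$ ranging over additive generators of $R$. Moreover $U(R)$ is finitely generated, since a finitely generated commutative ring has finitely generated unit group (Samuel's theorem; in the integral domain case this is the classical Dirichlet--Samuel unit theorem). Hence $\TI_n(R)$ is finitely generated, and so is the semidirect product.

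Under condition~(2), fix finitely many units $u_1,\ldots,u_m$ generating $U(R)$ and elements $r_1,\ldots,r_k$ generating $(R,+)$ as a $\Z[U(R)]$-module. We take as generators the $d_i(u_s)$ for $i\in I$, together with $e_{i,j}(r_t)$ for all $i<j$. By \eqref{conjdiag}, conjugating $\eij(r)$ by $d_i(u)$ or $d_j(u)$ yields $\eij(ur)$ or $\eij(u^{-1}r)$. Thus whenever $i\in I$ or $j\in I$, the subgroup generated contains $\eij(R)$.

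The (NG)-condition guarantees that among two consecutive indices $i,i+1$ at least one lies in $I$. So every superdiagonal root group $e_{i,i+1}(R)$ is contained in the subgroup generated. Commutators $[e_{i,i+1}(r),e_{i+1,j}(s)]=e_{i,j}(rs)$ then give all root groups, and hence all of $\mbU_n(R)$, by induction on $j-i$. (In fact only the superdiagonal generators are needed.)

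\emph{Necessity.} Suppose $\mbS_n(R)$ is finitely generated and $(R,+)$ is not. The plan is to produce abelian quotients and extract the three requirements.

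\begin{enumerate}
\item \textbf{$U(R)$ finitely generated.} The quotient by $\mbU_n(R)$ is $\TI_n(R)\cong U(R)^{|I|}$, which is finitely generated. If $I=\vazio$, then $\mbS_n(R)=\mbU_n(R)$ surjects onto $R^{n-1}$ via the superdiagonal entries, forcing $(R,+)$ to be finitely generated, a contradiction. So $I\neq\vazio$ and $U(R)$ is finitely generated.
\item \textbf{$R$ finitely generated over $U(R)$.} Pick $i$ with $i\in I$ or $i+1\in I$, which exists since $I\neq\vazio$ and $n\ge2$. Map $\mbS_n(R)$ onto the affine-type group $e_{i,i+1}(R)\rtimes(\text{diagonal})$ obtained by projecting onto the $2\times2$ block on rows and columns $i,i+1$. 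This image is a quotient of $\mbS_n(R)$, hence finitely generated. Finite generation of $R\rtimes A$, with $A$ acting through units, forces $R$ to be a finitely generated $\Z[A]$-module: finitely many generators have $R$-components whose $A$-orbits span.
\item \textbf{The (NG)-condition.} Suppose some $i$ has $i,i+1\notin I$. Then the map $\mbS_n(R)\to R$, $g\mapsto g_{i,i+1}$, is a homomorphism. Indeed, for a product $gh$ the $(i,i+1)$ entry equals $g_{ii}h_{i,i+1}+g_{i,i+1}h_{i+1,i+1}=h_{i,i+1}+g_{i,i+1}$, using upper triangularity and diagonal entries $1$ at positions $i$ and $i+1$. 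This map is surjective, so $(R,+)$ would be finitely generated, contradicting our assumption. Hence (NG) holds.
\end{enumerate}

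The main subtlety is step~2: we need to make sure the $2\times2$ block projection really is a homomorphism onto $R\rtimes U(R)$ with a nontrivial action. This holds because restricting to the block on rows and columns $i,i+1$ of an upper triangular matrix is multiplicative, and the action on the entry is by $u_iu_{i+1}^{-1}$, which ranges over all of $U(R)$ when one of the two indices lies in $I$. The remaining care is invoking Samuel's theorem for $U(R)$ in the sufficiency direction under~(1).
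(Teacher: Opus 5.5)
Your proof is correct. The sufficiency direction, the arguments that $I\neq\vazio$ and that $U(R)$ is finitely generated, and the (NG) argument via the homomorphism $g\mapsto g_{i,i+1}$ all match the paper. The genuine difference is how you show that $(R,+)$ is finitely generated over $U(R)$.

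The paper first uses that $\TI_n(R)$ is finitely generated abelian, hence finitely presented. It then invokes a result of Abels (a finitely generated group with finitely presented quotient has finitely normally generated kernel) to conclude that $\mbU_n(R)$ is finitely generated as a normal subgroup. Finally it reads off the module condition in $\mbS_n(R)/\mbU_n(R)'$.

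You instead pass to the $2\times2$ block quotient $R\rtimes A$ at positions $i,i+1$, where $A$ acts through all of $U(R)$. You then use the elementary fact that if $N\rtimes Q$ with $N$ abelian is generated by $(n_1,q_1),\dots,(n_k,q_k)$, then the $\Z[Q]$-submodule $M\leq N$ spanned by the $n_j$ satisfies $M\rtimes Q=N\rtimes Q$, whence $M=N$. Your route is shorter and avoids finite presentability and the external citation. It also does not depend on first knowing that $U(R)$ is finitely generated. The paper's detour is not really needed either, since the same observation applies directly to $\mbS_n(R)/\mbU_n(R)'\cong R^{n-1}\rtimes\TI_n(R)$.

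One small caveat concerns the sufficiency direction under condition (1). The blanket claim that a finitely generated commutative ring has finitely generated unit group is false in general: in $\F_p[t,x]/(x^2)$ the units contain $1+x\F_p[t]$. The conclusion you need still holds, however, because unit groups of rings that are finitely generated as $\Z$-modules are finitely generated.
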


\begin{proof}
Recall that $\mbS_n(R)$ is a split extension $\mbS_n(R) = \mbU_n(R) \rtimes \TI_n(R)$. Now, by the relations described in \Cref{sec:allrelations}, the nilpotent subgroup $\mbU_n(R) \leq \mbS_n(R)$ is generated by elementary matrices in the positions $(i,i+1)$ with $i$ ranging through $\{1,\ldots,n-1\}$, while the diagonal subgroup $\TI_n(R)$ is generated by diagonal matrices whose only non-one entries are in position $j$, with $j$ running over the given index set $I$. That is,
\[\mbS_n(R) = \gera{d_j(u),e_{i,i+1}(r) \mid j \in I, i \in \{1,\ldots,n-1\}, u \in U(R), r \in R}.\] 
Let us first show that the above conditions~(i) and~(ii) are necessary. We start by assuming that $(R,+)$ is \emph{not} finitely generated. We then have to consider different cases from Condition~(ii). 

To begin with, if $U(R)$ is not finitely generated, then $\mbS_n(R)$ cannot be finitely generated since $U(R)$ is a homomorphic image of $\mbS_n(R)$ --- e.g., by mapping onto any diagonal subgroup $\sbgpdi(R)$ with $i \in I$.
{Note that $I\neq \vazio$, because $\mbf{S}_n^{\vazio}(R)=\mbU_n(R)$ is not finitely generated if $(R,+)$ is not finitely generated.} So let us assume that $U(R)$ is indeed finitely generated. As our rings are commutative, $U(R)$ becomes a finitely generated abelian group, hence $U(R)$ --- and therefore also the diagonal subgroup $\TI_n(R)$ --- is finitely presented. But we have the short exact sequence $\mbU_n(R) \into \mbS_n(R) \onto \TI_n(R)$ with a finitely presented quotient $\TI_n(R)$. It thus follows that, for $\mbS_n(R)$ to be finitely generated, then $\mbU_n(R)$ must be finitely generated as the normal subgroup of this sequence; see, for instance, \cite[2.1 Satz(b)]{AbelsC2}. In turn, using the Relations~\eqref{conjdiag} and looking at the quotient $\mbS_n(R) / \mbU_n(R)'$, we see that $\mbU_n(R)$ (and also $\mbU_n(R)/\mbU_n(R)'$) can only be finitely generated as a normal subgroup when $R$ is finitely generated as a module over its units. 
Lastly, still supposing that $(R,+)$ is \emph{not} finitely generated, assume $I$ does not satisfy the (NG)-condition~\eqref{conditionI}. 
That is, assume there exists $i \in \{1,\ldots,n-1\}$ such that the consecutive indices $i$, $i+1$ do not belong to~$I$.
In this case, all diagonal matrices in $\mbS_n(R)$ act trivially on the elementary matrices $e_{i,i+1}(r)$, $r \in R$. 
In particular, the map $\mbS_n(R)\to R: \mbf{x} \mapsto x_{i,i+1}$ that sends each matrix $\mbf{x} = (x_{i,j})$ of  $\mbS_n(R)$  to its $(i,i+1)$-th entry is a morphism onto $(R,+)$, whence $\mbS_n(R)$ cannot be finitely generated.

We now argue that the conditions in the statement are sufficient. First of all, if $(R,+)$ is finitely generated, then the nilpotent subgroup $\mbU_n(R)$ is itself finitely generated. Furthermore, the group of units $U(R)$ is also finitely generated by Samuel's strengthening of the Unit Theorem of Dirichlet; see~\cite[Section~4.7]{Samuel}. Thus the diagonal subgroup $\TI_n(R)$ is also finitely generated, hence so is the extension $\mbS_n(R) = \mbU_n(R) \rtimes \TI_n(R)$.

Suppose that the second condition is fulfilled, i.e., $U(R)$ is finitely generated, the ring $R$ is finitely generated as an $U(R)$-module and the (NG)-condition \eqref{conditionI} holds. The first two requirements imply that the subgroups {of $\mbB_n(R)$} of the form 
\[\gera{d_{i}(u), e_{i,i+1}(r) \mid u\in U(R), r \in R} \cong \left( \begin{smallmatrix} * & * \\ 0 & 1 \end{smallmatrix} \right) \leq \GL_2(R)\]
and
\[\gera{d_{i+1}(u), e_{i,i+1}(r) \mid u\in U(R), r \in R} \cong \left( \begin{smallmatrix} 1 & * \\ 0 & * \end{smallmatrix} \right) \leq \GL_2(R)\]
are finitely generated. But given an arbitrary index $i \in \{1,\ldots,n-1\}$, the (NG)-condition \eqref{conditionI} implies that $i$ or $i+1$ (possibly both) belongs to the index set $I$, whence for each $i \in \{1,\ldots,n-1\}$ one of the above subgroups is contained in $\mbS_n(R)$. One can thus construct from the above (finitely many) subgroups a finite generating set for the whole $\mbS_n(R)$, as desired.
\end{proof}

We shall also make repeated use of the following fact. 

\begin{lem}\label{lem:metabelianresiduallyfinite}
Let $R$ be a commutative ring and 
suppose  $I \subseteq \{1,\ldots,n\}$. If $\mbS_n(R)$ is finitely generated, then its quotient $\mbS_n(R)/\mbU_n(R)'$ by the commutator subgroup $\mbU_n(R)' = [\mbU_n(R),\mbU_n(R)]$ of the nilpotent subgroup $\mbU_n(R) \nsgp \mbS_n(R)$ is residually finite.
\end{lem}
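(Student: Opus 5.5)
The quotient $G := \mbS_n(R)/\mbU_n(R)'$ is finitely generated, being a quotient of a finitely generated group, and it is metabelian. Indeed, $\mbU_n(R)/\mbU_n(R)'$ is abelian, and the quotient of $G$ by it is $\TI_n(R)$, which is abelian. The plan is to invoke P.~Hall's theorem that finitely generated metabelian groups are residually finite; more generally, finitely generated abelian-by-polycyclic groups are residually finite (Hall, Jategaonkar, Roseblade). We could simply cite this, but it is worth making the structure explicit.

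First, the structure of $G$. Using the commutator relations \eqref{rel:commutatorsU}, the subgroup $\mbU_n(R)'$ consists exactly of the matrices in $\mbU_n(R)$ whose entries on the first superdiagonal vanish. Consequently $A := \mbU_n(R)/\mbU_n(R)' \cong R^{n-1}$ via the superdiagonal entries $(x_{1,2},\ldots,x_{n-1,n})$. We therefore have
\[G \cong R^{n-1} \rtimes \TI_n(R),\]
where, by \eqref{conjdiag}, a diagonal element $d$ acts on the $i$-th coordinate by multiplication with $u_iu_{i+1}^{-1}$. Thus $G$ is abelian-by-abelian.

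Next, the quotient $Q := \TI_n(R)$ is a homomorphic image of $G$, so it is finitely generated abelian. Since $G$ is finitely generated, $A$ is finitely generated as a $\Z[Q]$-module, and $\Z[Q]$ is a commutative Noetherian ring. Hall's argument then runs as follows. Given $1\neq g\in G$, if $g\notin A$ we separate $g$ using the residually finite quotient $Q$. If $g\in A$, we use that a finitely generated module over a finitely generated commutative ring is residually finite as a module, that is, every nonzero element survives in some finite quotient module $A/B$, with $B$ a submodule of finite index. Since $B$ is $Q$-invariant, it is normal in $G$, and $G/B$ is a finitely generated (finite)-by-(finitely generated abelian) group, hence residually finite. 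In our split situation one can take a finite-index subgroup of $Q$ acting trivially on $A/B$ to conclude directly.

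The main obstacle is this module-theoretic residual finiteness. It rests on the Krull intersection theorem, applied to maximal ideals with finite residue fields in the finitely generated ring $\Z[Q]$, together with the Nullstellensatz over $\Z$. Since this is classical, I would cite Hall's theorem rather than reprove it, and note that commutativity of $R$ and finite generation of $\mbS_n(R)$ are the only hypotheses used.
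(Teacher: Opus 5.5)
Your proposal is correct and follows the paper's proof: the paper likewise notes that $\mbS_n(R)/\mbU_n(R)'$ is finitely generated and metabelian, and then cites P.~Hall's theorem. Your extra sketch of Hall's argument is accurate but not needed. It covers the identification with $R^{n-1}\rtimes\TI_n(R)$, finite generation of $R^{n-1}$ as a module over the Noetherian ring $\Z[\TI_n(R)]$, and residual finiteness of such modules.
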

\begin{proof}
Because $\mbS_n(R)$ is finitely generated, so is the quotient $\mbS_n(R)/\mbU_n(R)'$. Since $\mbS_n(R)/\mbU_n(R)'$ is metabelian, the claim follows from a famous theorem due to P.~Hall~\cite{PHallResFin}.
\end{proof}

When dealing with Reidemeister numbers, characteristic subgroups are very useful. 
It is known that, if $R$ is an integral domain, then $\mbU_n(R)$ is a characteristic subgroup of $\PB_n(R)$; cf.\ \cite[Proposition~3.9]{Bn1}. However, it is not true in general that $\mbU_n(R)$ is a characteristic subgroup of $\mbB_n(R)$; see \cite[Example~3.8]{Bn1}. 

In the sequel, we show that $\mbU_n(R)$ is characteristic in $\mbS_n(R)$, whenever $I$ is a proper subset of $\{1,\dots, n\}$. 

\begin{pps}\label{pps:char} If $R$ is an integral domain and $I$ is a proper subset of $\{1, \dots, n\}$, then $\mbU_n(R)$ is 
the Hirsch--Plotkin radical of $\mbS_n(R)$. 

That is, $\mbU_n(R)$ is the unique maximal subgroup $\mbS_n(R)$ with respect to being normal and locally nilpotent. In particular, $\mbU_n(R)$ is characteristic in $\mbS_n(R)$.
\end{pps}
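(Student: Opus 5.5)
Since $\mbU_n(R)$ is nilpotent and normal, it is contained in the Hirsch--Plotkin radical $HP$ of $\mbS_n(R)$. The plan is therefore to prove the reverse inclusion: any normal locally nilpotent subgroup $N \nsgp \mbS_n(R)$ lies in $\mbU_n(R)$. Since $HP$ is itself normal and locally nilpotent, this gives $HP=\mbU_n(R)$, and characteristicity is automatic because $HP$ is invariant under all automorphisms.

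Suppose $g = \mbf{u}d \in N$ with $\mbf{u} \in \mbU_n(R)$, $d = d_1(u_1)\cdots d_n(u_n) \in \TI_n(R)$, and $d \neq 1$. Because $I$ is a proper subset, some diagonal entry of $d$ equals $1$ while some entry $u_i \neq 1$. Hence there exist \emph{adjacent} indices $k, k+1$ with $u_k \neq u_{k+1}$: scan from a position where the entry is $1$ to a position where it is not. This is exactly where properness of $I$ is used; for $\mbB_n(R)$ a scalar $d$ would make the argument fail, matching the known failure there. Fix $r \in R\setminus\{0\}$ and set $x = e_{k,k+1}(r)$. Since $N$ is normal, the commutator $[g,x] = g x g^{-1} x^{-1}$ lies in $N$, and then so do all iterated commutators $[g,[g,\ldots,[g,x]]]$.

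Working modulo $\mbU_n(R)'$, or more precisely projecting onto the $(k,k+1)$-entry, which is a homomorphism on $\mbU_n(R)$ equivariant for conjugation by $\mbS_n(R)$, \eqref{conjdiag} shows that conjugation by $g$ acts on the $(k,k+1)$-entry as multiplication by $u_ku_{k+1}^{-1}$; the $\mbf{u}$ part acts trivially on this abelian quotient. Consequently the $m$-fold iterated commutator has $(k,k+1)$-entry $(u_ku_{k+1}^{-1}-1)^m r$, which is nonzero for every $m$ because $R$ is a domain. The finitely generated subgroup $\gera{g,x} \le N$ must be nilpotent by local nilpotence, so sufficiently long iterated commutators vanish. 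This contradiction forces $d=1$ and hence $N \le \mbU_n(R)$.

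The main point requiring care is that the iterated commutators lie in $\mbU_n(R)$ and that their images in the superdiagonal quotient $\mbU_n(R)/[\mbU_n(R),\mbU_n(R)] \cong R^{n-1}$ are computed correctly. This follows since $[g,x] \in \mbU_n(R)$ by normality of $\mbU_n(R)$ and the abelian quotient $\mbS_n(R)/\mbU_n(R)$, together with the fact that the quotient map is $\mbS_n(R)$-equivariant with $\TI_n(R)$ acting diagonally on coordinates.
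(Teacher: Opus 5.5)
\textbf{The step that fails.} You assert $\gera{g,x} \le N$, but $x = e_{k,k+1}(r)$ is an arbitrary element of $\mbU_n(R)$ and need not lie in $N$. Normality only gives $[g,x]\in N$, as you yourself note earlier. So local nilpotence of $N$, as you invoke it, says nothing about $\gera{g,x}$.

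The repair is short. Put $c_1=[g,x]$ and $c_{m+1}=[g,c_m]$. Since $g$ and $c_1$ both lie in $N$, the subgroup $H=\gera{g,c_1}$ is a finitely generated subgroup of $N$, hence nilpotent of some class $c$. By induction $c_m\in\gamma_m(H)$, so $c_{c+1}=1$. This contradicts your computation that the $(k,k+1)$-entry of $c_m$ is $(u_ku_{k+1}^{-1}-1)^m r\neq 0$, which is correct. Alternatively, do what the paper does and replace $N$ by $\mbU_n(R)N$. That subgroup is normal, is locally nilpotent by the Hirsch--Plotkin theorem, and does contain $x$.

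\textbf{Comparison with the paper.} With this fix your argument is valid, and it takes a somewhat different route from the paper.

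The paper enlarges $M$ to $\mbU_n(R)M$ precisely so that it can strip off the unipotent part. This yields a purely diagonal $d\neq 1$ inside a normal locally nilpotent subgroup that also contains all of $\mbU_n(R)$. The paper then picks $i\in I$ with $u_i\neq 1$ and any $j\in I^c$, so that $u_j=1$ and no adjacency is needed. It computes the iterated commutator exactly inside $\mbU_n(R)$, obtaining $[e_{i,j}(1),_\ell d]=e_{i,j}((1-u_i)^\ell)$.

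You keep $g=\mbf{u}d$ as it is and neutralise $\mbf{u}$ by projecting onto the $(k,k+1)$-entry, which is an $\mbS_n(R)$-equivariant abelian quotient. This forces you to choose adjacent indices with $u_k\neq u_{k+1}$. In exchange, once repaired via $\gera{g,[g,x]}$, your route avoids the Hirsch--Plotkin theorem entirely. You show directly that every normal locally nilpotent subgroup lies in the nilpotent normal subgroup $\mbU_n(R)$, and that already gives the unique maximality and hence characteristicity.
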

\begin{proof}
If $I = \vazio$ or if the only unit of $R$ is the trivial unit $1$, then $\mbU_n(R)$ coincides with $\mbS_n(R)$ and the result becomes trivial. We can thus assume that $I \neq \vazio$ and that there exists $u\in U(R)$ such that $u\neq 1$. 

The goal is showing that $\mbU_n(R)$ is the unique maximal subgroup of $\mbS_n(R)$ with respect to being locally nilpotent \emph{and} normal. For this, let $M$ be a locally nilpotent normal subgroup of $\mbS_n(R)$. If $M$ is not a subset of $\mbU_n(R)$, then $\barra{M}=\mbU_n(R)M$ is a locally nilpotent normal subgroup of $\mbS_n(R)$ which strictly contains $\mbU_n(R)$. Hence 
$\barra{M}$ must contain a non-trivial element of the form 
\[d=\prod_{i\in I}d_i(u_i) \in \TI_n(R).\]
Choose $i\in I$ for which $u_i\neq 1$. Let $j\in I^c$. Without lost of generality we may assume that $i<j$. 

Given $\ell \geq 1$ and two elements $x,y$ in $\mbS_n(R)$, we denote by 
$[x,_\ell y]$ the left-normed iterated commutator $[[x, y], \ldots,y]$ with $\ell$ occurrences of $y$. 
We will show that $[e_{i,j}(1),_\ell d]\neq 1$ for all $\ell\in \Z_{> 0}$, contradicting the local nilpotency of $\barra{M}$. In fact, by induction on $\ell$ one can check that 
\[ [e_{i,j}(1),_\ell d]=e_{i,j}((1-u_i)^\ell).\]
Indeed using relations~\eqref{rel:commutatorsU} and~\eqref{conjdiag}, we find that 
\begin{eqnarray*}
[e_{i,j}((1-u_i)^\ell),d]& =& e_{i,j}((1-u_i)^\ell) d e_{i,j}(-(1-u_i)^\ell) d^{-1}\\
& = & e_{i,j}((1-u_i)^\ell)e_{i,j}(-u_i(1-u_i)^\ell)\\
& = & e_{i,j} ((1-u_i)^{\ell+1})
\end{eqnarray*}
Since $R$ is an integral domain and $u_i\neq 1$, it follows that $e_{i,j}((1-u_i)^\ell)$ is not the identity matrix (for otherwise $(1-u_i)^\ell$ would be zero). 
\end{proof}

\section{The group \texorpdfstring{$\PB_n(R)$}{PBn(R)} has \texorpdfstring{$R_\infty$}{R-infinity}} \label{sec:3} 
We start by proving Theorem~\ref{mainthm} in the specific case of $\PB_n(R)$, in order to illustrate our methods.
We start by recalling a theorem of Levchuk~\cite{LevchukOriginal} that describes all automorphisms of the groups  of unitriangular matrices for $n\geq 4$. This is used in \Cref{Sec:AutPBn} to characterize $\Aut(\PB_n(R))$. This description, in turn, is used in \Cref{Sec:PBn} to show that,  for the required rings $R$, every automorphism of $\PB_n(R)$ has infinitely many Reidemeister classes whenever $n\geq 4$.

\subsection{Automorphisms of upper triangular matrix groups}\label{sec:AutUn}
In~\cite{LevchukOriginal}, Levchuk describes automorphisms of groups of unitriangular matrices. 
Besides Levchuk's original paper, we refer the reader to \cite[Section~3]{Bn2} for a more detailed analysis and description of all types of automorphisms of unitriangular matrices. 

Being more precise, the elements of $\Aut(\mbU_n(R))$ are given in terms of a composition of specific types of automorphisms. Let us describe the three kinds of automorphisms which are the most important here.

The first automorphisms considered are inner automorphisms, i.e., maps of the form $\iota_{\mbf{u}}: \mbU_n(R) \to \mbU_n(R)$ (with $\mbf{u}\in \mbU_n(R)$) given by 
\[\iota_{\mbf{u}}(\mbf{h})=\mbf{u}\mbf{h}\mbf{u}^{-1}.\]
The group $\Inn(\mbU_n(R))$ of all inner automorphisms of $\mbU_n(R)$ is obviously a subgroup of $\Aut(\mbU_n(R))$. 
As illustrated by Relation~\eqref{rel:commutators}, $\mbU_n(R)$ is also invariant under conjugation by diagonal matrices. For this reason, inner automorphisms $\iota_d \in \Aut(\PB_n(R))$ with $d \in \mbD_n(R)$ also induce automorphisms of $\mbU_n(R)$, which by abuse of notation will also be denoted by $\iota_d$, despite not being an inner automorphism of $\mbU_n(R)$. {Note that  scalar matrices induce the identity automorphism. 

The next map is what we call the `flip' automorphism, denoted by $\tau \in \Aut(\mbU_n(R))$. Intuitively, one can think of $\tau$ as the transformation flipping the entries of the matrices in $\mbU_n(R)$ along the anti-diagonal and adjusting the signs of the entries in accordance with the commutator relations~\eqref{rel:commutatorsU}. More precisely, $\tau$ is given by the rule
\begin{equation}\label{tau} \tau(\eij(r)) = e_{n-j+1,n-i+1}((-1)^{j-i-1}r), \text{ for } \eij(r) \in \mbU_n(R). \end{equation}
One easily checks that $\tau^2 = \id$.

The last type of automorphisms which will play a significant role here comes from ring automorphisms of $R$. Given a ring automorphism $\alpha \in \anel{\Aut}(R)$, the induced map $\alpha_\ast : \mbU_n(R) \to \mbU_n(R)$ given by 
\[\alpha_\ast((a_{ij})) := (\alpha(a_{ij})) \quad \text{ for every matrix } \quad (a_{ij}) \in \mbU_n(R)\] 
is a group automorphism of $\mbU_n(R)$. With this in mind, we consider $\anel{\Aut}(R)$ as a subgroup of $\Aut(\mbU_n(R))$ in the obvious way and call it the \emph{subgroup of ring automorphisms} of $\mbU_n(R)$.

There are, of course, many more automorphisms in $\Aut(\mbU_n(R))$ not described here. Though for our purposes, the previous three will suffice in view of the next observation.

\begin{rmk}\label{rmk:ftrivial} Let $\mbU_n(R)'$ denote the commutator subgroup $[\mbU_n(R),\mbU_n(R)]$ of $\mbU_n(R)$. Recall that $\mbU_n(R)'$ is a characteristic subgroup of $\mbU_n(R)$.
Therefore, each automorphism $\varphi\in \Aut(\mbU_n(R))$ induces an automorphism $\overline{\varphi}$ on the quotient 
$\tfrac{\mbU_n(R)}{\mbU_n(R)'}$. Levchuk shows that the inner, flip, and ring automorphisms, together with certain additional automorphisms that induce the identity on the quotient $\tfrac{\mbU_n(R)}{\mbU_n(R)'}$, actually generate the full automorphism group $\Aut(\mbU_n(R))$. This will be made more precise in the next theorem. 
\end{rmk}

\begin{thm}[{Levchuk~\cite{LevchukOriginal}}] \label{thm:Levchuk}
Let $R$ be an integral domain, let $n \geq 4$, and suppose $R \neq \F_2$. Then any automorphism $\psi \in \Aut(\mbU_n(R))$ can be written as a product 
\[\psi = \iota_\mbf{u} \circ \iota_d \circ f \circ \tau^\veps \circ \alpha_\ast, \]
 where $\iota_\mbf{u} \in \Inn(\mbU_n(R))$, $\iota_d$ is the automorphism of $\mbU_n(R)$ induced by conjugation $\iota_d \in \Inn(\PB_n(R))$ with a diagonal matrix~$d$, $\veps \in \set{0,1}$, $\alpha_\ast$ is induced by a ring automorphism $\alpha \in \anel{\Aut}(R)$, and the remaining automorphism $f \in \Aut(\mbU_n(R))$ induces the identity on $\tfrac{\mbU_n(R)}{\mbU_n(R)'}$; see \cite{LevchukOriginal} or \cite[Section~3]{Bn2} for a full description of~$f$.
\end{thm}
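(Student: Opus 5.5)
The plan is to analyse $\psi$ through the action it induces on the abelianisation $\mbU_n(R)/\mbU_n(R)' \cong R^{n-1}$, where the $i$-th coordinate records the superdiagonal entry in position $(i,i+1)$. Write $X_i = \{e_{i,i+1}(r) \mid r \in R\}$ for the root subgroups. The goal is to show that, after composing $\psi$ with suitable automorphisms of the listed types, the resulting automorphism acts trivially on $\mbU_n(R)/\mbU_n(R)'$. Whatever remains is then by definition an $f$ as in the statement. Any inner part $\iota_\mbf{u}$ that turns up along the way also acts trivially on the abelianisation, so it can be absorbed into $f$ or kept separately, as the statement prefers.

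\emph{Step 1: $\psi$ permutes the root lines modulo $\mbU_n(R)'$, and the permutation is trivial or the flip.} The images $\overline{X_i}$ of the $X_i$ must be recognised intrinsically. I would use centralisers. For $x \in \mbU_n(R)$, consider the image of $C_{\mbU_n(R)}(x)$ in $\mbU_n(R)/\mbU_n(R)'$, and also the commutator pairing
\[
\mbU/\mbU' \times \mbU/\mbU' \to \mbU'/[\mbU,\mbU'],
\]
which by Relations~\eqref{rel:commutatorsU} is bilinear over $\Z$ and pairs coordinate $i$ only with coordinates $i\pm1$. One then checks the following. An element whose image lies in a single $\overline{X_i}$ has a strictly larger centraliser, as measured by this pairing, than an element involving two or more coordinates; since $R$ is a domain, a product $rs$ vanishes only if $r=0$ or $s=0$, which is what makes such elements detectable. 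Hence $\psi$ maps each $\overline{X_i}\setminus\{0\}$ into some $\overline{X_{\pi(i)}}$. Since the pairing is preserved, $\pi$ respects the adjacency of the path graph $1-2-\cdots-(n-1)$. So $\pi$ is the identity or the reversal, and composing with $\tau$ lets us assume $\pi=\id$. This is the step I expect to be the main obstacle. The endpoints $X_1, X_{n-1}$ and the interior lines behave differently, and one must rule out "diagonal" images such as $e_{1,2}(r)e_{n-1,n}(s)$. The hypotheses $n \geq 4$ and $R\neq\F_2$ enter exactly here: for $n=3$ the pairing cannot distinguish such mixtures, and over $\F_2$ there are exceptional automorphisms.

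\emph{Step 2: the coordinate maps come from a single ring automorphism.} Once $\pi=\id$, Step 1 gives additive bijections $\sigma_i\colon R\to R$ with
\[
\psi(e_{i,i+1}(r)) \equiv e_{i,i+1}(\sigma_i(r)) \bmod \mbU_n(R)'.
\]
Applying $\psi$ to the relations $[e_{i,i+1}(r),e_{i+1,i+2}(s)] = e_{i,i+2}(rs)$ and comparing modulo the next term of the lower central series gives a functional equation of the form $\sigma_i(r)\sigma_{i+1}(s) = \rho_i(rs)$ for some additive map $\rho_i$. Setting $r=1$ and then $s=1$ yields $\sigma_i(r) = c_i\,\alpha_i(r)$, where $c_i = \sigma_i(1)$ and $\alpha_i$ is multiplicative and additive with $\alpha_i(1)=1$. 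Surjectivity of $\sigma_i$ forces $c_i \in U(R)$. Comparing neighbouring indices, which uses $n\geq 4$ so that every line has enough neighbours, forces all $\alpha_i$ to equal a single $\alpha \in \anel{\Aut}(R)$.

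\emph{Step 3: absorb the scalars and conclude.} Choose a diagonal $d$ with $u_iu_{i+1}^{-1}=c_i$; by \eqref{conjdiag} this is always possible, uniquely modulo scalars. Then $(\iota_d\circ\tau^\veps\circ\alpha_\ast)^{-1}\circ\psi$ induces the identity on $\mbU_n(R)/\mbU_n(R)'$. Calling this automorphism $f$ (possibly after splitting off an inner factor $\iota_\mbf{u}$), we obtain the claimed factorisation $\psi = \iota_\mbf{u}\circ\iota_d\circ f\circ\tau^\veps\circ\alpha_\ast$. The order of the factors can be arranged freely because $\iota_d$, $\tau$ and $\alpha_\ast$ normalise the subgroup of automorphisms acting trivially on the abelianisation.
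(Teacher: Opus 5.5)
The paper gives no proof of this statement; it imports it from Levchuk \cite{LevchukOriginal}. Your argument therefore has to carry the whole weight. Steps~2 and~3 are a sound reduction. Once the induced map on $\mbU_n(R)/\mbU_n(R)'$ is known to be monomial with $\pi\in\{\id,\text{reversal}\}$, the functional equation gives $\sigma_i=c_i\alpha$ with $c_i\in U(R)$ and $\alpha\in\anel{\Aut}(R)$. A diagonal $d$ then absorbs the $c_i$. The automorphisms acting trivially on the abelianisation form a kernel, hence a normal subgroup, so the factors can be reordered.

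The gap is Step~1, which carries essentially all of the theorem, and the mechanism you propose for it does not work. Take the pairing-centraliser of $x$ to be $\{y:\omega(x,y)=0\}$. Your criterion, that a root element has a strictly larger pairing-centraliser than an element with two or more non-zero coordinates, is false for every $n\geq 4$:
\begin{itemize}
\item For $n\geq 5$, $e_{3,4}(s)$ and $e_{1,2}(r)e_{3,4}(s)$ both have pairing-centraliser $\{y_2=y_4=0\}$.
\item For $n=4$, the elements $(r,0,0)$, $(0,0,s)$ and $(r,0,s)$ of $\mbU_4(R)/\mbU_4(R)'\cong R^3$ all have pairing-centraliser $\{y_2=0\}$.
\end{itemize}
For $n=4$ no refinement using only this pairing can succeed. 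With $\omega(x,y)=(x_1y_2-x_2y_1,\;x_2y_3-x_3y_2)$, the additive automorphism $L(x_1,x_2,x_3)=(x_1,x_2,x_1+x_3)$ satisfies $\omega(Lx,Ly)=M\omega(x,y)$ for $M(a,b)=(a,b-a)$. So the pairing cannot exclude $\overline{X_1}\mapsto\{(r,0,r)\}$. That is exactly the ``diagonal'' image you flag, in exactly the case $n=4$ that the theorem covers.

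In characteristic $2$ the situation is worse: $(L,M,\id)$ is an automorphism of the entire associated graded Lie ring of $\mbU_4(R)$. The bracket into $\gamma_3$ is $(x,(a,b))\mapsto x_1b-x_3a$, and $x_1(b-a)-(x_1+x_3)a=x_1b-x_3a-2x_1a$. So for admissible rings such as $\F_4$ or $\F_2[t]$, no argument built from bilinear commutator maps between lower central quotients can prove Step~1. One has to use the non-bilinear part of the group law, for instance
\[
[e_{1,2}(r)e_{3,4}(r),\,e_{2,3}(s)]=e_{1,4}(-r^2s)\,e_{2,4}(-rs)\,e_{1,3}(rs),
\]
whose corner entry is quadratic in $r$. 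Suppose a lift of $L$ is allowed arbitrary corrections in $\mbU_4(R)'$ on the generators. Requiring that it send $e_{1,3}(rs)$ to an element depending only on $rs$ forces $rs(1+r)(1+s)=0$ for all $r,s\in R$, that is, $R=\F_2$. This group-level computation is precisely where the hypothesis $R\neq\F_2$ enters, and your sketch replaces it with ``one then checks''. So Step~1, which you rightly identify as the main obstacle, is not established, and without it the factorisation does not follow.
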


In the next subsections, we assume $n\geq 4$ and $R\neq \F_2$ so that we can invoke Levchuk's result.

\subsection{Automorphisms of \texorpdfstring{$\PB_n(R)$}{PBn(R)}}\label{Sec:AutPBn}

A direct consequence of  \Cref{thm:Levchuk} is the following.
\begin{cor}\label{cor:dec} 
For $n\geq 4$ and $R$ an integral domain different from $\F_2$, 
every automorphism $\varphi\in \Aut(\PB_n(R))$ can be decomposed as $\varphi=\iota \circ \psi$, where $\iota\in \Inn(\PB_n(R))$  and 
\[\psi\bigm|_{\mbU_n(R)}=f \circ \tau^\veps \circ \alpha_\ast,\]
where $f$, $\tau^\veps$, and $\alpha_\ast$ are as in the statement of \Cref{thm:Levchuk}.
\end{cor}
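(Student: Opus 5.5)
To prove Corollary~\ref{cor:dec}, start from an arbitrary $\varphi\in\Aut(\PB_n(R))$ and first restrict it to the unipotent part. By \cite[Proposition~3.9]{Bn1}, or by \Cref{pps:char} together with \Cref{lem:isowithPBn} when $n-1$ indices are used, the subgroup $\mbU_n(R)$ is characteristic in $\PB_n(R)$. Hence $\varphi$ restricts to an automorphism $\varphi|_{\mbU_n(R)}\in\Aut(\mbU_n(R))$.

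Since $n\geq 4$, $R$ is an integral domain and $R\neq\F_2$, Levchuk's \Cref{thm:Levchuk} applies to this restriction and yields a factorisation
\[\varphi|_{\mbU_n(R)}=\iota_{\mbf u}\circ\iota_d\circ f\circ\tau^\veps\circ\alpha_\ast ,\]
with $\mbf u\in\mbU_n(R)$ and $d\in\mbD_n(R)$. The element $g:=\mbf u[d]\in\PB_n(R)$ defines an inner automorphism $\iota:=\iota_g\in\Inn(\PB_n(R))$. Its restriction to $\mbU_n(R)$ is exactly $\iota_{\mbf u}\circ\iota_d$, because conjugation by $[d]$ on $\mbU_n(R)$ is the map $\iota_d$ of Levchuk's statement; scalar matrices act trivially, so the choice of representative $d$ does not matter.

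Now set $\psi:=\iota^{-1}\circ\varphi\in\Aut(\PB_n(R))$. Then $\varphi=\iota\circ\psi$, and on $\mbU_n(R)$ we have
\[\psi|_{\mbU_n(R)}=(\iota_{\mbf u}\circ\iota_d)^{-1}\circ\varphi|_{\mbU_n(R)}=f\circ\tau^\veps\circ\alpha_\ast ,\]
which is the claimed decomposition.

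The only point needing care is the first step, namely that $\mbU_n(R)$ is characteristic in $\PB_n(R)$. For $\mbB_n(R)$ this can fail, but here it is supplied by \cite{Bn1} or \Cref{pps:char}. Everything after that is formal bookkeeping: the inner part of Levchuk's decomposition is realised by a single inner automorphism of the ambient group $\PB_n(R)$ and is then stripped off.
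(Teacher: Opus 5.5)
Your proposal is correct and follows the paper's proof essentially verbatim. You apply Levchuk's theorem to $\varphi|_{\mbU_n(R)}$, realise $\iota_{\mbf u}\circ\iota_d$ as the restriction of conjugation by $\mbf u[d]$ in $\PB_n(R)$, and set $\psi=\iota^{-1}\circ\varphi$. You also make explicit a step the paper leaves tacit, namely that $\mbU_n(R)$ is characteristic in $\PB_n(R)$ (via \cite[Proposition~3.9]{Bn1} or \Cref{pps:char}), which is needed before Levchuk's theorem can be applied to the restriction at all.
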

\begin{proof}
Let $\varphi\in \Aut(\PB_n(R))$. By \Cref{thm:Levchuk} we know that $\varphi\bigm|_{\mbU_n(R)}$ is of the form $\iota \circ f \circ \tau^\veps \circ \alpha_\ast,$ where $f$, $\tau^\veps$, and $\alpha_\ast$ are as in the statement of \Cref{thm:Levchuk} and $\iota= \iota_\mbf{u} \circ \iota_d$,  
 where $\iota_\mbf{u} \in \Inn(\mbU_n(R))$, and $\iota_d$ is the automorphism induced by $\iota_d \in \Inn(\PB_n(R))$ for a diagonal matrix~$d$. Note that $\iota_{\mbf{u}d} \in \Inn(\PB_n(R))$ restricted to $\mbU_n(R)$ coincides with $\iota$.
 Now take $\psi=
\iota_{\mbf{u}d}^{-1} \circ \varphi \in \Aut(\PB_n(R))$. Then $\varphi = \iota \circ \psi$ and 
\[\psi\bigm|_{\mbU_n(R)}= (\iota^{-1} \circ \varphi)\bigm|_{\mbU_n(R)} =f \circ \tau^\veps \circ \alpha_\ast.\]
\end{proof}

In the sequel, we use this description of elements of $\Aut(\PB_n(R))$ to show that $\PB_n(R)$ has property~$\Ri$.

\subsection{Proof of \Cref{mainthm} for \texorpdfstring{$\PB_n(R)$}{PBn(R)}}\label{Sec:PBn}
In this section, let $R$ be either 
\begin{itemize}
    \item an integral domain of characteristic $0$ such that $\PB_n(R)$ is finitely generated, or
    \item of the form 
    $R=\F_q[t,f_1(t)^{-1}, \ldots, f_{\ell-1}(t)^{-1}]$ for a prime power~$q$, an integer~$\ell \geq 2$ and fixed pairwise coprime monic irreducible polynomials  $f_1(t)$, ..., $f_{\ell}(t) \in \F_q[t]$. 
\end{itemize}
The fact that $\PB_n(R)$ is finitely generated in the second case above is well-known; see, e.g., \cite[Theorem~1.3]{YuriSoluble}. 

Given an automorphism $\varphi \in \Aut(\PB_n(R))$, we want to show that $R(\varphi)=\infty$. By \Cref{cor:dec}, the automorphism $\varphi$ can be decomposed as $\varphi=\iota \circ \psi$, where $\iota \in \Inn(\PB_n(R))$ and $\psi\mid_{\mbU_n(R)}=f \circ \tau^\veps \circ \alpha_\ast$ (where we use the notation of Levchuk's decomposition; cf. \Cref{thm:Levchuk}). 
Since Reidemeister numbers are invariant under composition with inner automorphisms, it is enough to show that $R(\psi)=\infty$; see \Cref{lem:ignoreinner}.

The group $\mbU_n(R)'$ is a characteristic subgroup of  $\PB_n(R)$, as $\mbU_n(R)'$ is characteristic in $\mbU_n(R)$, which is itself characteristic in $\PB_n(R)$ by \Cref{pps:char}. Hence one can consider the automorphism $\Psi$ induced by $\psi$ on the quotient $\tfrac{\PB_n(R)}{\mbU_n(R)'}$.
We are thus considering the short exact sequence
\[1 \to \mbU_n(R)' \into  \PB_n(R) \onto \frac{\PB_n(R)}{\mbU_n(R)'} \to 1.\]
By \Cref{lem:quotient}, if $R(\Psi)=\infty$, then $R(\psi)=\infty$.
Let us then verify that $R(\Psi)=\infty$. 

Since the quotient $\tfrac{\PB_n(R)}{\mbU_n(R)'}$ is finitely generated (as $\PB_n(R)$ is so) and residually finite (by \Cref{lem:metabelianresiduallyfinite})}, it suffices to show that $|\Fix(\Psi)|=\infty$ in view of Jabara's trick~(\Cref{lem:Jabara}). 
We consider the restriction \[\Psi'=\Psi\bigm|_{\tfrac{\mbU_n(R)}{\mbU_n(R)'}} \in \Aut\left(\tfrac{\mbU_n(R)}{\mbU_n(R)'}\right).\]
Since $\Fix(\Psi') \subseteq \Fix(\Psi)$, it suffices to prove that $\Psi'$ has infinitely many fixed points. 

Now recall that \[\psi\bigm|_{\mbU_n(R)}= f \circ \tau^\veps \circ \alpha_\ast\] where
\begin{enumerate}
    \item $f \in \Aut(\mbU_n(R))$ induces the identity automorphism on $\tfrac{\mbU_n(R)}{\mbU_n(R)'}$,
    \item $\tau$ is the flip automorphism, and $\veps \in \set{0,1}$, and 
    \item $\alpha_\ast$ is induced by a ring automorphism $\alpha \in \anel{\Aut}(R)$.
\end{enumerate} 
Since the automorphism $\Psi'$ coincides with the automorphism induced by $\psi$ on $\tfrac{\mbU_n(R)}{\mbU_n(R)'}$, it is given by \[\overline{\tau^\veps \circ \alpha_\ast},\] the automorphism induced by $\tau^\veps \circ \alpha_\ast$. We are thus left to show that $|\Fix\left(\overline{\tau^\veps \circ \alpha_\ast}\right)|=\infty$. 
Before proving this, we require the following observation.

\begin{lem}\label{lem:transcendental} 
Let $q$ be a power of a prime and $m$ a positive integer. Fix pairwise coprime monic irreducible elements  $f_1(t)$, ..., $f_{m}(t) \in \F_q[t]$. Then for 
\[R=\F_q[t] \, \text{ or } \, R=\F_q[t,f_1(t)^{-1}, \ldots, f_{m}(t)^{-1}]\]
there exists a non-unit element $x \in R$ 
that is transcendental over $\F_q$ and invariant by all ring automorphisms of~$R$.
\end{lem}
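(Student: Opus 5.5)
The plan is to construct $x$ as a norm of a fixed non-unit over a finite group of automorphisms that contains every ring automorphism of $R$. Write $K=\F_q(t)$ for the fraction field of $R$ (in both cases).

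First, every $\alpha\in\anel{\Aut}(R)$ extends uniquely to a field automorphism $\tilde\alpha$ of $K$, and $\tilde\alpha(R)=R$. Moreover $\alpha$ maps $\F_q$ onto itself, because $\F_q=\{0\}\cup\{\text{torsion elements of } U(R)\}$: the units of $R$ are $\F_q^\times\times\langle f_1,\dots,f_m\rangle$, and the second factor is torsion-free. Hence $\tilde\alpha$ is determined by its restriction to $\F_q$, which lies in $\mathrm{Gal}(\F_q/\F_p)$, together with the image $\tilde\alpha(t)$. Since $\tilde\alpha$ is an automorphism of $K$ preserving $\F_q$, the element $\tilde\alpha(t)$ generates $K$ over $\F_q$. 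By Lüroth-type degree considerations this forces $\tilde\alpha(t)=\frac{at+b}{ct+d}$ with $\left(\begin{smallmatrix} a&b\\ c&d\end{smallmatrix}\right)\in\GL_2(\F_q)$. Consequently all such $\tilde\alpha$ lie in the group
\[\mathcal{G}=\PGL_2(\F_q)\rtimes\mathrm{Gal}(\F_q/\F_p)\le\Aut(K),\]
which is finite. Let $H=\{h\in\mathcal{G}\mid h(R)=R\}$. This is a finite subgroup, and every ring automorphism of $R$ is the restriction of an element of $H$.

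Next, fix any non-zero non-unit $y\in R$, for instance $y=t$ when $R=\F_q[t]$, or an irreducible polynomial coprime to all $f_i$ in the localized case (such a polynomial exists because there are infinitely many monic irreducibles). Set
\[x=\prod_{h\in H}h(y).\]
Each factor lies in $R$ because $h(R)=R$. For any $\alpha\in\anel{\Aut}(R)$, the extension $\tilde\alpha$ lies in $H$, and left multiplication by $\tilde\alpha$ permutes $H$; since $R$ is commutative, $\alpha(x)=x$. The element $x$ is non-zero because $R$ is a domain. It is a non-unit because $y$ lies in some maximal ideal $\mathfrak m$, and the factor corresponding to $h=\id$ therefore places $x$ in $\mathfrak m$.

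Finally, $x$ is transcendental over $\F_q$. Indeed, $\F_q$ is algebraically closed in $\F_q(t)$, so any element of $R$ algebraic over $\F_q$ lies in $\F_q$. Every non-zero element of $\F_q$ is a unit, and $x$ is a non-zero non-unit, so $x\notin\F_q$.

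The main obstacle is the first step: justifying that ring automorphisms of $R$ preserve $\F_q$ and act on $t$ by a Möbius transformation. The unit-torsion argument handles $\F_q$. For $t$, I would use the standard fact that the $\F_q$-semilinear automorphisms of $\F_q(t)$ are exactly the Möbius substitutions composed with Frobenius twists on coefficients. For $R=\F_q[t]$ this is even more direct: the automorphism must send $t$ to a polynomial of degree one, $at+b$. As a sanity check in that case, $x=(t^q-t)^{q-1}$ works explicitly.
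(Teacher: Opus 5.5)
Your proof is correct and follows the paper's approach: both take the product of the images of a fixed non-unit over the finite group of ring automorphisms of $R$ (the paper uses an irreducible $g(t)$ coprime to $t,f_1,\dots,f_m$), and then deduce invariance, the non-unit property and transcendence as you do. The only difference is that you prove the finiteness of $\anel{\Aut}(R)$ yourself, by embedding it into the finite group $\PGL_2(\F_q)\rtimes\mathrm{Gal}(\F_q/\F_p)$ of automorphisms of $\F_q(t)$ via your torsion-units and L\"uroth-degree argument, whereas the paper simply cites Mitra--Sankaran for this fact.
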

\begin{proof}
First, fix a polynomial $g(t) \in \F_q[t] \subseteq R$ that is irreducible over $\F_q$ and coprime with  $t$, $f_1(t)$, ..., $f_{m}(t)$. Note that, if $R = \F_q[t]$, then 
\[U(R) = U(\F_q[t]) = \{ a \mid a \in \F_q \setminus \{0\} \},\]
whereas for $R = \F_q[t,f_1(t)^{-1}, \ldots, f_{m}(t)^{-1}]$ one has 
\[U(R)= \{a\cdot f_1(t)^{\varepsilon_1}\cdot \ldots \cdot f_{m}(t)^{\varepsilon_m} \mid a \in \F_q\setminus\{0\}, \quad \varepsilon_1, \ldots, \varepsilon_m \in \Z\}.\] 
Either way, the chosen element $g(t)$ satisfies  
\[g(t) \notin U(R).\] 
{By \cite[Remark 3.10 (ii)]{MitraSankaran} we know that $\anel{\Aut}(R)$ is finite and so it makes sense to set}
\[x = \prod_{\sigma \in \anel{\Aut}(R)} \sigma\left(g(t)\right).\]
By construction $x \notin U(R)$ --- being a finite product of non-units --- and $\sigma(x)=x$ for all $\sigma \in \anel{\Aut}(R)$. Moreover, since $R \subsetneq \F_q(t)$, the element $x$ is a non-constant rational function over $\F_q$ and hence transcendental over~$\F_q$.
\end{proof}

We are now in a position to prove $|\Fix\left(\overline{\tau^\veps \circ \alpha_\ast}\right)|=\infty$ for all rings considered.

\begin{pps} \label{pps:flipalphaPBn} Let $R$ be as in the beginning of this section and $n  \geq 4$. Then, the induced automorphism $\overline{\tau^\veps \circ \alpha_\ast} \in \Aut(\mbU_n(R)/\mbU'_n(R))$ as above has infinitely many fixed points.
\end{pps}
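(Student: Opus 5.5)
We identify the abelianisation $\mbU_n(R)/\mbU_n(R)'$ with $R^{n-1}$ via
\[\overline{\mbf{x}} \longmapsto (x_{1,2}, x_{2,3}, \ldots, x_{n-1,n}),\]
using the relations~\eqref{rel:commutatorsU}, which show that $\mbU_n(R)'$ consists exactly of the matrices with zero first superdiagonal. Under this identification $\overline{\alpha_\ast}$ acts coordinatewise by $\alpha$. By~\eqref{tau}, the flip satisfies $\tau(e_{i,i+1}(r)) = e_{n-i,n-i+1}(r)$, since the sign is $(-1)^{0}=1$. Hence $\overline{\tau}$ is the coordinate reversal $(r_1,\ldots,r_{n-1}) \mapsto (r_{n-1},\ldots,r_1)$. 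Thus $\overline{\tau^\veps\circ\alpha_\ast}$ sends $(r_1,\ldots,r_{n-1})$ to $(\alpha(r_{\pi(1)}),\ldots,\alpha(r_{\pi(n-1)}))$, where $\pi$ is the identity or the reversal.

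The plan is to produce an infinite subset $F\subseteq R$ of $\alpha$-fixed elements. Its image under the diagonal embedding $r\mapsto(r,r,\ldots,r)$ is then fixed by $\overline{\tau^\veps\circ\alpha_\ast}$ in both cases $\veps=0,1$, since the constant vectors are reversal-invariant. We split according to the ring.

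\emph{Characteristic zero.} Every ring automorphism fixes $1$ and hence fixes the prime subring $\Z\subseteq R$ pointwise. So we take $F=\Z$, which is infinite, and no finite generation hypothesis is even needed here.

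\emph{Positive characteristic,} with $R=\F_q[t,f_1(t)^{-1},\ldots,f_{\ell-1}(t)^{-1}]$. By \Cref{lem:transcendental} there is an element $x\in R$ that is transcendental over $\F_q$ and fixed by every ring automorphism of $R$, in particular by $\alpha$. Then $\alpha$ fixes the subring $\F_p[x]$ pointwise, where $\F_p$ is the prime field, which is pointwise fixed. Transcendence of $x$ makes $\{x^k \mid k\geq 0\}$ pairwise distinct, so we take $F=\{x^k \mid k\geq 0\}$.

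The main obstacle is the positive characteristic case, where the fixed points of $\alpha$ on $R$ could a priori be just a finite field; this is exactly what \Cref{lem:transcendental} handles, through the finiteness of $\anel{\Aut}(R)$. The remaining care is in the sign bookkeeping of $\tau$ on the superdiagonal, which the computation above settles.
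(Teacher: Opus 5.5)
Your proof is correct and is essentially the paper's argument: you use the same infinite set of $\alpha$-fixed elements ($\Z$ in characteristic zero, the powers of the element $x$ from \Cref{lem:transcendental} otherwise) and place it flip-symmetrically in $\mbU_n(R)/\mbU_n(R)'\cong R^{n-1}$. The only difference is that you use the constant vectors $(s,\ldots,s)$ where the paper uses $\mathcal{E}=\{\bekl{1,2}(s)\,\bekl{n-1,n}(s)\mid s\in S\}$. Both choices work for this statement, but only the paper's carries over to \Cref{pps:flipalpha}: there the extra factor $\overline{\iota_{d^{c}_\ast}\circ\iota_{d^c}}$ is arranged to act trivially only at positions $(1,2)$ and $(n-1,n)$, and would in general rescale the middle entries of your constant vectors (e.g.\ by $u_1u_4^{-1}$ at position $(2,3)$ for $\mbA_4$).
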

\begin{proof}
If $R$ is an integral domain with $\carac(R)=0$, it contains a copy of $\Z$, and any of its ring automorphisms $\alpha$ must fix all elements of $\Z$. If $R$ is of the form $R=\F_q[t,f_1(t)^{-1}, \ldots, f_{\ell-1}(t)^{-1}]$, then \Cref{lem:transcendental} assures the existence of a non-unit transcendental element $x \in R$ that is fixed by any of its ring automorphisms~$\alpha$. In particular, $\alpha(x^n)=x^n$, for all $n \in \Z_{>0}$. 

We then uniformly consider the infinite subset of $R$ given by
\[S= \begin{cases}
    \Z, & \text{ in characteristic 0, }\\
    \{x^n \mid n \in \Z_{>0}\}, & \text{ otherwise}.
\end{cases}\]
In particular, all elements of $S$ are fixed by all ring automorphisms of $R$.

Consequently the induced group automorphism $\overline{\alpha}_\ast$ on 
\[\mbU_n(R)/\mbU_n(R)'=\left\{\left( \begin{smallmatrix} 1 & * &  &  &  \\  & 1 & * & & \\ & & \ddots & \ddots & \\ & & & 1 & * \\ & & & & 1 \\  \end{smallmatrix}  \right)\mbU_n(R)' \right\}\cong R^{n-1}\]
has infinitely many fixed points. 
This shows the case $\varepsilon =0$.

 It remains to check that the map $\overline{\tau \circ \alpha_\ast}$ also has infinitely many fixed points. For this, consider the infinite subset of $\frac{\mbU_n(R)}{\mbU_n(R)'}$ given by
\[\mathcal{E}=\left\{ \bekl{1,2}(s)\cdot\bekl{n-1,n}(s)\mid s\in S\right\} = \left\{\left( \begin{smallmatrix} 1 & s & & & & \\  & 1 & 0 & & & & \\ & &  1 & 0 & & & \\ & & & \ddots & \ddots & &  \\ & & & & 1 & 0 & \\ & & & & & 1 & s \\ & & & & & & 1 \\  \end{smallmatrix} \right)\mbU_n(R)': s\in S\right\},\] 
where we are denoting $\bekl{i,j}(s)=\ekl{i,j}(s)\mbU_n(R)'$. Since $n\geq 4$, 
\[\overline{\tau^\veps \circ \alpha_\ast}(\bekl{1,2}(s)\cdot\bekl{n-1,n}(s))=\bekl{n-1,n}(s)\cdot\bekl{1,2}(s)=\bekl{1,2}(s)\cdot\bekl{n-1,n}(s)\] 
for all $s\in S$; cf. Relations~\eqref{rel:commutatorsU}. 
Therefore $\mathcal{E} \subset \Fix\left(\overline{\tau^\veps \circ \alpha_\ast}\right)$ and we are done. 
\end{proof}

\begin{cor}
    For $R$ an integral domain as in \Cref{mainthm}, the group $\mbB_n(R) = \mb{S}^{\{1,\ldots,n\}}_n(R)$ has property~$R_\infty$ whenever it is finitely generated and $n\geq 4$.
\end{cor}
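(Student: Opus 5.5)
The plan is to reduce to the case of $\PB_n(R)$ treated in \Cref{Sec:PBn}, passing to a quotient by a characteristic subgroup. Although $\mbU_n(R)$ need not be characteristic in $\mbB_n(R)$ (cf.~\cite[Example~3.8]{Bn1}), the \emph{centre} of any group is characteristic. So I will show that the centre of $\mbB_n(R)$ is exactly the scalar subgroup $Z_n(R)$. Then every $\phee\in\Aut(\mbB_n(R))$ satisfies $\phee(Z_n(R))=Z_n(R)$ and induces $\barra{\phee}\in\Aut(\PB_n(R))$. By \Cref{lem:quotient}, it then suffices to know that $R(\barra{\phee})=\infty$.

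\emph{Step 1 (the centre).} First I dispose of a degenerate case. If $U(R)=\{1\}$, then $\mbB_n(R)=\mbU_n(R)=\PB_n(R)$ and there is nothing to reduce. So assume there is a unit $u\neq 1$, and let $x=(x_{k,l})$ be central in $\mbB_n(R)$. By \eqref{conjdiag}, conjugation by $d_i(u)$ multiplies the entries $x_{i,l}$ with $l>i$ by $u$ and the entries $x_{k,i}$ with $k<i$ by $u^{-1}$, leaving all other entries unchanged. Since $R$ is a domain and $u\neq1$, centrality forces all these entries to vanish; letting $i$ run over $\{1,\ldots,n\}$ shows that $x$ is diagonal. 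Next, a diagonal $x=\prod_k d_k(v_k)$ commuting with every $e_{i,i+1}(1)$ satisfies $v_iv_{i+1}^{-1}=1$ by \eqref{conjdiag}, so $x$ is scalar. Conversely, scalars are central. Hence $Z(\mbB_n(R))=Z_n(R)$.

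\emph{Step 2 (hypotheses for $\PB_n(R)$).} If $\mbB_n(R)$ is finitely generated, then so is its quotient $\PB_n(R)$. Both kinds of rings in \Cref{mainthm} are infinite, so in particular $R\neq\F_2$ and Levchuk's theorem applies, as required in \Cref{Sec:PBn}. The argument of that subsection then applies verbatim and shows that every automorphism of $\PB_n(R)$ has infinite Reidemeister number. Indeed, it only used:
\begin{itemize}
\item \Cref{cor:dec};
\item the characteristic subgroup $\mbU_n(R)'$ from \Cref{pps:char};
\item residual finiteness of the quotient by $\mbU_n(R)'$ (\Cref{lem:metabelianresiduallyfinite});
\item Jabara's trick (\Cref{lem:Jabara}) together with \Cref{pps:flipalphaPBn}.
\end{itemize}

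\emph{Step 3 (conclusion).} Combining the two steps, \Cref{lem:quotient} gives $R(\phee)=\infty$ for every $\phee\in\Aut(\mbB_n(R))$. The only genuinely new point is Step~1; its one subtlety is the unit-free case, which is handled separately as above.
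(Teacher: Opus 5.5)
Your proposal is correct and follows essentially the same route as the paper. The paper's proof simply observes that $\PB_n(R)$ is a characteristic quotient of $\mbB_n(R)$ and combines this with the $R_\infty$ result for $\PB_n(R)$ via \Cref{lem:quotient}. Your identification of $Z_n(R)$ as the centre of $\mbB_n(R)$ (with the unit-free case treated separately), together with your check of finite generation and $R\neq\F_2$, supplies the details the paper leaves implicit.
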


\begin{proof}
    This follows from the facts that $\PB_n(R)$ has $R_\infty$, which we have just proved, and that $\PB_n(R)$ is a characteristic quotient of $\mbB_n(R)$ when $R$ is an integral domain. 
\end{proof}

\section{The general case}\label{sec:4}

Throughout this section, we let $n\geq 4$ and fix a proper subset $I \subsetneq \{1,\dots, n\}$ satisfying the (NG)-condition \eqref{conditionI}. 
As at the beginning of \Cref{Sec:PBn}, we assume throughout that $R$ is as in the assumptions of \Cref{mainthm}. That is, $R$ is either
\begin{itemize}
    \item an integral domain of characteristic $0$ such that $\mbS_n(R)$ is finitely generated, or
    \item of the form 
    $R=\F_q[t,f_1(t)^{-1}, \ldots, f_{\ell-1}(t)^{-1}]$ for a prime power~$q$, an integer~$\ell \geq 2$ and fixed pairwise coprime monic irreducible elements  $f_1(t)$, ..., $f_{\ell}(t) \in \F_q[t]$. 
\end{itemize}

This section completes the proof of \Cref{mainthm} by proving it for the remaining groups $\mbS_n(R)$. 
Similarly to \Cref{sec:3}, we use Levchuk's description of automorphisms of $\mbU_n(R)$ to decompose automorphisms of $\mbS_n(R)$. We then use this decomposition and a version of \Cref{pps:flipalphaPBn} to show that the group $\mbS_n(R)$ has property~$R_\infty$.

\subsection{Automorphisms of \texorpdfstring{$\mbS_n(R)$}{SI(R)}}\label{Sec:AutSI}

Recall that by Theorem~\ref{thm:Levchuk} every automorphism of 
$\mbU_n(R)$ can be written as a product
\[\iota \circ f \circ \tau^\veps \circ \alpha_\ast.\]
Observe that the automorphism $\iota\in\Aut(\mbU_n(R))$ induced by an inner automorphism of $\PB_n(R)$ can be decomposed as
$\iota= \iota_\mbf{u} \circ \iota_d$, where $\mbf{u}\in\mbU_n(R) $ and
$d\in \mbD_n(R)$ is a diagonal matrix. In turn, we can write $d$ as a product 
$d=d^I d^{c}$ with $d^I\in\TI_n(R)$ and $d^{c}\in\TIc_n(R)$. That is, $d^I$ is the diagonal matrix obtained from $d$ by replacing all entries indexed by elements of $I^{c}$ by one, and vice-versa for $d^{c}$. In particular, $\iota_{\mbf{u}} \circ \iota_{d^I} \in \Inn(\mbS_n(R))$ and $\iota_{d^{c}}\in \Aut(\mbU_n(R))$.

Completely analogous to Corollary~\ref{cor:dec}, we now obtain the following.

\begin{cor}\label{cor:decS}
Every automorphism $\varphi\in \Aut(\mbS_n(R))$ can be decomposed as $\varphi=\iota \circ \psi$, where $\iota \in \Inn(\mbS_n(R))$ and 
\[\psi\bigm|_{\mbU_n(R)}=\iota_{d^c} \circ f \circ \tau^\veps \circ \alpha_\ast,\]
where $f$, $\tau^\veps$, and $\alpha_\ast$ are as in the statement of Theorem~\ref{thm:Levchuk}, and $\iota_{d^c}$ is the automorphism induced by the inner automorphism of $\PB_n(R)$ given by a matrix $d^c \in \TIc_n(R)$. 
\end{cor}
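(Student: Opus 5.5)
The plan mirrors the proof of \Cref{cor:dec}. The key input is that $\mbU_n(R)$ is characteristic in $\mbS_n(R)$, by \Cref{pps:char}, since $I$ is a proper subset of $\{1,\ldots,n\}$.

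Let $\varphi\in\Aut(\mbS_n(R))$ be given. Because $\mbU_n(R)$ is characteristic, the restriction $\varphi|_{\mbU_n(R)}$ lies in $\Aut(\mbU_n(R))$. We have $n\geq 4$, and $R\neq \F_2$ because $R$ is infinite in both cases under consideration. So \Cref{thm:Levchuk} applies and gives
\[\varphi|_{\mbU_n(R)}=\iota_{\mbf{u}}\circ\iota_d\circ f\circ\tau^\veps\circ\alpha_\ast\]
with $\mbf{u}\in\mbU_n(R)$ and $d\in\mbD_n(R)$ diagonal. Split $d=d^Id^c$ with $d^I\in\TI_n(R)$ and $d^c\in\TIc_n(R)$, as in the paragraph preceding the statement. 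Since $\mbD_n(R)$ is abelian, $\iota_d=\iota_{d^I}\circ\iota_{d^c}$ as automorphisms of $\mbU_n(R)$.

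Now set $g=\mbf{u}d^I\in\mbS_n(R)$ and let $\iota:=\iota_g\in\Inn(\mbS_n(R))$. Define $\psi:=\iota_g^{-1}\circ\varphi\in\Aut(\mbS_n(R))$, so that $\varphi=\iota\circ\psi$ holds by construction. Conjugation by $g$ on $\mbU_n(R)$ agrees with $\iota_{\mbf{u}}\circ\iota_{d^I}$. Restricting to $\mbU_n(R)$ therefore gives
\[\psi|_{\mbU_n(R)}=(\iota_{\mbf{u}}\circ\iota_{d^I})^{-1}\circ\iota_{\mbf{u}}\circ\iota_{d^I}\circ\iota_{d^c}\circ f\circ\tau^\veps\circ\alpha_\ast=\iota_{d^c}\circ f\circ\tau^\veps\circ\alpha_\ast,\]
which is the desired form.

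There is no real obstacle here. The only points to check are, first, that the restriction to $\mbU_n(R)$ is legitimate, which is where properness of $I$ and \Cref{pps:char} are used. Second, that $\iota_d$ factors through the commuting diagonal pieces, so the $I$-part can be absorbed into an inner automorphism of $\mbS_n(R)$. The $I^c$-part cannot be absorbed in general, since $d^c\notin\mbS_n(R)$, and this is why it survives in the decomposition. (In the degenerate case where $U(R)$ is trivial or $I=\vazio$, all diagonal factors are trivial and the statement still holds.)
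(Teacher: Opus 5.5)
Your proof is correct and follows the paper's argument: restrict $\varphi$ to the characteristic subgroup $\mbU_n(R)$ (via \Cref{pps:char}), apply Levchuk, split $d=d^Id^c$, and absorb $\iota_{\mbf{u}}\circ\iota_{d^I}=\iota_{\mbf{u}d^I}$ into $\Inn(\mbS_n(R))$. One small slip in your closing parenthetical: when $I=\varnothing$ it is $d^I$ that becomes trivial, while $d^c=d$ need not be; the statement still holds, and in this section the (NG)-condition already rules out $I=\varnothing$.
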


\subsection{Proof of Theorem~\ref{mainthm} for \texorpdfstring{$\mbS_n(R)$}{SI(R)}}\label{Sec:proof}
Let $\varphi \in \Aut(\mbS_n(R))$. We now show that $R(\varphi)=\infty$. 

Corollary~\ref{cor:decS} assures that $\varphi=\iota \circ \psi$, where $\iota \in \Inn(\mbS_n(R))$ and 
\[\psi\bigm|_{\mbU_n(R)}=\iota_{d^c} \circ f \circ \tau^\veps \circ \alpha_\ast,\]
where $f$, $\tau^\veps$, and $\alpha_\ast$ are as in the statement of Theorem~\ref{thm:Levchuk}, and $\iota_{d^c}$ is the automorphism induced by the inner automorphism of $\mbB_n(R)$ given by a matrix $d^c \in \TIc_n(R)$. 

\begin{lem}\label{lem:d}
Let $d^c \in \TIc(R)$ be as above. By definition, there are $u_i \in U(R)$ for each $i\in I^c$ such that  
\[d^c=\prod_{i\in I^c}d_i(u_i)=\prod_{i=1}^{n}d_i(u_i), \text{ where we set } u_i = 1 \text{ for each } i\in I.\]
In this case, the diagonal matrix $d^c_\ast \in \mbD_n(R)$ defined by  
\[d^{c}_\ast=d_1(u_2)d_2(u_1)d_{n-1}(u_n)d_n(u_{n-1})\]
constructed from $d^c$ belongs to $\TI_n(R)$. 
\end{lem}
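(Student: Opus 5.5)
The claim is that $d^c_\ast = d_1(u_2)\,d_2(u_1)\,d_{n-1}(u_n)\,d_n(u_{n-1})$ lies in $\TI_n(R)$. Since $\TI_n(R)$ consists precisely of the diagonal matrices whose entries indexed by $I^c$ equal one, it suffices to check that, in each slot $j \in \{1,2,n-1,n\}$ with $j \notin I$, the unit placed there by the definition of $d^c_\ast$ is $1$. All other diagonal entries of $d^c_\ast$ are already one. The argument is a direct case check using the (NG)-condition~\eqref{conditionI} and the convention $u_i = 1$ for $i \in I$.

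First, since $n \geq 4$, the four indices $1,2,n-1,n$ are pairwise distinct. Hence the four factors act in different diagonal positions, so the $j$-th entry of $d^c_\ast$ is exactly $u_2, u_1, u_n, u_{n-1}$ for $j = 1,2,n-1,n$ respectively.

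Next, handle the first pair of positions.
\begin{itemize}
\item Suppose $1 \notin I$. By (NG) with $i=1$ we get $2 \in I$, so $u_2 = 1$ and the entry in position $1$ is $1$.
\item Suppose $2 \notin I$. Then $1 \in I$: otherwise $1 \notin I$ would force $2 \in I$ by (NG). Hence $u_1 = 1$ and the entry in position $2$ is $1$.
\end{itemize}

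The last pair of positions is symmetric.
\begin{itemize}
\item Suppose $n \notin I$. Then $n-1 \in I$, since $n-1 \notin I$ would force $n \in I$ by (NG) with $i = n-1$. Hence $u_{n-1} = 1$ and the entry in position $n$ is $1$.
\item Suppose $n-1 \notin I$. Then (NG) gives $n \in I$, so $u_n = 1$ and the entry in position $n-1$ is $1$.
\end{itemize}

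The only point needing care is the distinctness of the indices, which is exactly where $n \geq 4$ enters. Everything else is an immediate unwinding of the definitions, so I expect no real obstacle.
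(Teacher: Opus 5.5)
Your proof is correct and is essentially the paper's argument. Both use the (NG)-condition to see that at most one index in each of the pairs $\{1,2\}$ and $\{n-1,n\}$ lies outside $I$, so any unit that the swap places in an $I^c$-slot is $u_i$ for some $i\in I$, hence equal to $1$. The paper splits into cases on $|\{1,2\}\cap I|$ and uses closure of $\TI_n(R)$ under products, so it never needs the indices to be distinct. You instead check entry by entry, which is why you invoke $n\geq 4$; the difference is cosmetic.
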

\begin{proof}
Let us first check that $d_1(u_2)d_2(u_1)$ is an element of $\TI_n(R)$. The (NG)-condition~\eqref{conditionI} on $I$ assures that $|\{1,2\}\cap I|\geq 1$. Note that, since $d^c \in \TIc_n(R)$, we actually have $u_i = 1$ whenever $i$ \emph{does} belong to $I$, in contrast with the usual setting.

If both $1,2 \in I$, then $u_1=u_2=1$. Whence $d_1(u_2)d_2(u_1)$ is the identity matrix.

Suppose $\{1,2\}\cap I=\{i\}$ and $\{1,2\}\cap I^c=\{j\}$. In this case, we have $u_i=1$ and $u_j\in U(R)$. In particular, 
\[d_1(u_2)d_2(u_1)=d_i(u_j)d_j(u_i)=d_i(u_j) \in \TI_n(R).\]
Arguing similarly, one concludes that $d_{n-1}(u_n)d_n(u_{n-1})\in \TI_n(R)$. Hence, $d^{c}_\ast\in \TI_n(R)$.
\end{proof}

\begin{lem}\label{thetrick} 
Every automorphism $\varphi\in \Aut(\mbS_n(R))$ can be decomposed as $\varphi=\iota_0 \circ \psi_0$, where $\iota_0 \in \Inn(\mbS_n(R))$ and \[\psi_0\bigm|_{\mbU_n(R)}=\iota_{d^{c}_\ast}\circ\iota_{d^c} \circ f \circ \tau^\veps \circ \alpha_\ast\]
for some $d^c \in \TIc_n(R)$ and where $f$, $\tau^\veps$, and $\alpha_\ast$ are as in the statement of Theorem~\ref{thm:Levchuk}, and $d^{c}_\ast$ is the matrix constructed from $d^c$ as in Lemma~\ref{lem:d}.
\end{lem}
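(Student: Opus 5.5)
Start from Corollary~\ref{cor:decS}: write $\varphi=\iota\circ\psi$ with $\iota\in\Inn(\mbS_n(R))$ and $\psi|_{\mbU_n(R)}=\iota_{d^c}\circ f\circ\tau^\veps\circ\alpha_\ast$ for some $d^c\in\TIc_n(R)$. By Lemma~\ref{lem:d}, the matrix $d^c_\ast$ built from $d^c$ lies in $\TI_n(R)\leq\mbS_n(R)$. Hence conjugation by it, $\iota_{d^c_\ast}$, is an inner automorphism of $\mbS_n(R)$, and so is its inverse $\iota_{d^c_\ast}^{-1}=\iota_{(d^c_\ast)^{-1}}$.

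Set $\psi_0:=\iota_{d^c_\ast}\circ\psi\in\Aut(\mbS_n(R))$ and $\iota_0:=\iota\circ\iota_{d^c_\ast}^{-1}\in\Inn(\mbS_n(R))$. Then
\[\iota_0\circ\psi_0=\iota\circ\iota_{d^c_\ast}^{-1}\circ\iota_{d^c_\ast}\circ\psi=\iota\circ\psi=\varphi.\]
It remains to identify the restriction of $\psi_0$ to $\mbU_n(R)$. Since $\mbU_n(R)$ is normal, $\psi_0|_{\mbU_n(R)}=\iota_{d^c_\ast}|_{\mbU_n(R)}\circ\psi|_{\mbU_n(R)}$. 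This composite equals $\iota_{d^c_\ast}\circ\iota_{d^c}\circ f\circ\tau^\veps\circ\alpha_\ast$, where $\iota_{d^c_\ast}$ on $\mbU_n(R)$ is exactly the automorphism induced by diagonal conjugation, as in the notation of Theorem~\ref{thm:Levchuk}. The diagonal matrix $d^c$ is the same one as in Corollary~\ref{cor:decS}, which gives the claimed form.

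There is essentially no obstacle here. The only point that needs care is that $d^c_\ast$ genuinely lies in $\mbS_n(R)$, so that the compensating conjugation is inner in $\mbS_n(R)$ and not merely in $\PB_n(R)$; this is exactly the content of Lemma~\ref{lem:d}, which is where the (NG)-condition is used. The motivation, to be exploited afterwards rather than in this lemma, is that on the abelianisation $\mbU_n(R)/\mbU_n(R)'$ the combined diagonal $d^c_\ast d^c$ acts on $\bekl{1,2}(s)$ and $\bekl{n-1,n}(s)$ in a way compatible with the flip $\tau$. This should let the fixed-point argument of Proposition~\ref{pps:flipalphaPBn} go through.
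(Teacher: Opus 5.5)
Your proof is correct and follows the paper's argument exactly: you take the decomposition $\varphi=\iota\circ\psi$ from Corollary~\ref{cor:decS}, set $\psi_0=\iota_{d^{c}_\ast}\circ\psi$ and $\iota_0=\iota\circ\iota_{d^{c}_\ast}^{-1}$, and use Lemma~\ref{lem:d} to get $d^{c}_\ast\in\TI_n(R)$, so that $\iota_0$ is inner in $\mbS_n(R)$. One small nitpick: the fact that $\psi(\mbU_n(R))=\mbU_n(R)$, which you need in order to compose the restrictions, comes from Corollary~\ref{cor:decS} (ultimately from $\mbU_n(R)$ being characteristic, Proposition~\ref{pps:char}) rather than from normality alone.
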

\begin{proof}
Let $\varphi = \iota \circ \psi$ be the decomposition as in Corollary~\ref{cor:decS}. Take $\psi_0 = \iota_{d^{c}_\ast} \circ \psi$. Then clearly  $\varphi=(\iota\circ \iota_{d^{c}_\ast}^{-1}) \circ \psi_0$, where 
\[\psi_0\bigm|_{\mbU_n(R)}=\iota_{d^{c}_\ast}\circ\iota_{d^c} \circ f \circ \tau^\veps \circ \alpha_\ast,\]
and since
$\iota_{d^{c}_\ast} \in \Inn(\TI_n(R))$, it follows that $\iota\circ \iota_{d^{c}_\ast}^{-1}\in \Inn(\mbS_n(R))$.
\end{proof}

By \Cref{thetrick} and the invariance of Reidemeister numbers by inner automorphisms (cf. \Cref{lem:ignoreinner}), it suffices to show $R(\psi_0)=\infty$. For this, we adapt the arguments of \Cref{Sec:PBn}: firstly, we consider the automorphism $\Psi_0$ induced on the quotient $\tfrac{\mbS_n(R)}{\mbU_n(R)'}$ by $\psi_0$, and obtain a short exact sequence 
\[1 \to \mbU_n(R)' \into  \mbS_n(R) \onto \frac{\mbS_n(R)}{\mbU_n(R)'} \to 1.\]

Again, \Cref{lem:quotient} ensures that $R(\Psi_0)=\infty$ implies $R(\psi_0)=\infty$.

Once more, {since $\tfrac{\mbS_n(R)}{\mbU_n(R)'}$ is finitely generated and residually finite by \Cref{lem:metabelianresiduallyfinite}}, Jabara's trick~(\Cref{lem:Jabara}) reduces the problem of showing $R(\Psi_0)=\infty$ to proving that $|\Fix(\Psi_0)|=\infty$. 

We consider the restriction 
\[\Psi'_0=\Psi_0\bigm|_{\tfrac{\mbU_n(R)}{\mbU_n(R)'}} \in \Aut\left(\tfrac{\mbU_n(R)}{\mbU_n(R)'}\right).\]
Since $\Fix(\Psi'_0) \subseteq \Fix(\Psi_0)$, it suffices to prove that $\Psi'_0$ has infinitely many fixed points.
\medskip

In the following, given an automorphism $\Phi \in \Aut(\mbS_n(R))$, denote by $\overline{\Phi}$ the automorphism induced on the characteristic subgroup $\tfrac{\mbU_n(R)}{\mbU_n(R)'}$ of the metabelian quotient $\tfrac{\mbS_n(R)}{\mbU_n(R)'}$. 

Since the automorphism $\Psi'_0$ is the automorphism induced by $\psi_0$ on $\tfrac{\mbU_n(R)}{\mbU_n(R)'}$, it is given by \[\overline{\iota_{d^{c}_\ast}\circ \iota_{d^{c}}\circ \tau^\veps \circ \alpha_\ast}.\] Thus, it suffices to prove that  $|\Fix\left(\overline{\iota_{d^{c}_\ast} \circ \iota_{d^c}\circ \tau^\veps \circ \alpha_\ast}\right)|=\infty$. This is the content of the next proposition.

\begin{pps} \label{pps:flipalpha} Let $R$ be as in the beginning of this section and $n  \geq 4$. Then, the automorphism $\overline{\iota_{d^{c}_\ast}\circ \iota_{d^c}\circ\tau^\veps \circ \alpha_\ast} \in \Aut(\mbU_n(R)/\mbU_n(R)')$ defined above has infinitely many fixed points.
\end{pps}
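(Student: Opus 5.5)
The plan is to show that the same infinite family $\mathcal{E}=\{\bekl{1,2}(s)\cdot\bekl{n-1,n}(s)\mid s\in S\}$ used in the proof of \Cref{pps:flipalphaPBn} is fixed. The key observation is that the extra diagonal twist $\iota_{d^{c}_\ast}\circ\iota_{d^c}$ acts trivially on the two corner coordinates $(1,2)$ and $(n-1,n)$. This is exactly why $d^{c}_\ast$ was built in \Cref{lem:d} by swapping $u_1\leftrightarrow u_2$ and $u_{n-1}\leftrightarrow u_n$.

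First I will identify $\mbU_n(R)/\mbU_n(R)'\cong R^{n-1}$ through the superdiagonal entries, with coordinate $i$ corresponding to $\bekl{i,i+1}$. I will record how each factor acts on this quotient:
\begin{itemize}
\item $\overline{\alpha_\ast}$ applies $\alpha$ coordinatewise.
\item $\overline{\tau}$ sends $\bekl{i,i+1}(r)\mapsto\bekl{n-i,n-i+1}(r)$; by \eqref{tau} the sign is $(-1)^0=1$. In particular it interchanges the coordinates $1$ and $n-1$.
\item By \eqref{conjdiag}, conjugation by a diagonal matrix with entries $w_1,\dots,w_n$ multiplies coordinate $i$ by $w_iw_{i+1}^{-1}$.
\end{itemize}

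Next I will compute the diagonal entries of the product $d^{c}_\ast d^c$. Since $n\geq4$, the index pairs $\{1,2\}$ and $\{n-1,n\}$ are disjoint, so the factors of $d^{c}_\ast$ do not interfere. The entries are therefore $w_1=u_2u_1$, $w_2=u_1u_2$, $w_{n-1}=u_nu_{n-1}$ and $w_n=u_{n-1}u_n$, with the remaining $w_i=u_i$ (these play no role). Hence $w_1w_2^{-1}=1=w_{n-1}w_n^{-1}$, and $\overline{\iota_{d^{c}_\ast}\circ\iota_{d^c}}$ fixes every element supported on coordinates $1$ and $n-1$.

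Finally, take $S$ as in the proof of \Cref{pps:flipalphaPBn}. In characteristic zero this is $S=\Z$; otherwise it is $S=\{x^m\mid m\in\Z_{>0}\}$, with $x$ the non-unit transcendental element from \Cref{lem:transcendental}, which is fixed by every ring automorphism. Then for $s\in S$:
\begin{itemize}
\item $\overline{\alpha_\ast}$ fixes $\bekl{1,2}(s)\bekl{n-1,n}(s)$;
\item $\overline{\tau}^{\veps}$ swaps the two factors, which commute in the abelian quotient;
\item the diagonal twist acts trivially on both factors.
\end{itemize}
So each element of $\mathcal{E}$ is fixed, and $\mathcal{E}$ is infinite because $S$ is infinite and $s\mapsto\bekl{1,2}(s)\bekl{n-1,n}(s)$ is injective. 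The only real obstacle is the bookkeeping in the middle step: checking that in $d^{c}_\ast d^c$ positions $1,2$ (respectively $n-1,n$) carry equal entries. This needs $n\geq4$, so that the corner blocks do not overlap, together with \Cref{lem:d}, which guarantees that $d^{c}_\ast\in\TI_n(R)$ and hence that the decomposition in \Cref{thetrick} is legitimate.
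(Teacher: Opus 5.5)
Your proposal is correct and follows essentially the same route as the paper. It reuses the set $\mathcal{E}$ from \Cref{pps:flipalphaPBn}, on which $\overline{\tau^\veps\circ\alpha_\ast}$ is the identity, and checks that the diagonal twist acts trivially on the coordinates $(1,2)$ and $(n-1,n)$. The only cosmetic difference is that you compute the diagonal entries of the product $d^{c}_\ast d^c$, whereas the paper applies $\iota_{d^c}$ and $\iota_{d^{c}_\ast}$ to $\ekl{1,2}(r)$ one at a time and notes that the scalars $u_1u_2^{-1}$ and $u_2u_1^{-1}$ cancel.
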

\begin{proof}
In \Cref{pps:flipalphaPBn} we defined the following infinite subset of $R$ fixed pointwise by all its automorphisms 
\[S= \begin{cases}
    \Z, & \text{ in characteristic 0, }\\
    \{x^n \mid n \in \Z_{>0}\}, & \text{ otherwise},
\end{cases}\]
where the element $x\in R$ in the case of positive characteristic is as in \Cref{lem:transcendental}. 
We then showed that the restriction of 
$\overline{\tau^\veps \circ \alpha_\ast}$ to the infinite subset
\[\mathcal{E}=\left\{ \bekl{1,2}(s)\cdot\bekl{n-1,n}(s)\mid s\in S\right\} \subset \frac{\mbU_n(R)}{\mbU_n(R)'}\] 
is the identity map. 
Thus, we are left to show that 
\[\overline{\iota_{d^{c}_\ast}\circ \iota_{d^c}}(w)=w, \text{ for all }w\in \mathcal{E}.\]

Let $r\in R$. From Relation~\eqref{conjdiag}, we obtain 
\[\iota_{d^c}(\ekl{1,2}(r))=\ekl{1,2}(u_1u_{2}^{-1}r),\]
while 
\[\iota_{d^{c}_\ast}(\ekl{1,2}(r))=\ekl{1,2}(u_2u_{1}^{-1}r).\]
This yields
\[\overline{\iota_{d^{c}_\ast}\circ\iota_{d^c}}(\bekl{1,2}(r))=\bekl{1,2}(u_2u_1^{-1}u_1u_2^{-1}r)=\bekl{1,2}(r).\]

Similarly, 
\[\overline{\iota_{d^{c}_\ast}\circ\iota_{d^c}}(\bekl{n-1,n}(r))=\bekl{n-1,n}(r).\]
Thus, $\overline{\iota_{d^{c}_\ast}\circ \iota_{d^c}}$ also induces the identity on $\mathcal{E}$, which finishes off the proof.
\end{proof}

\section{Separating groups \texorpdfstring{$\mbS_n(R)$}{SIn(R)} with \texorpdfstring{$R_\infty$}{R-infinity} by cohomological properties}\label{sec:separating}

\Cref{mainthm} yields the following corollaries which `enrich' the known families of amenable groups with property~$\Ri$. Note that \Cref{cohomologicalmainthm} is an immediate consequence of Theorems~\ref{corollary:Abels1} and~\ref{corollary:Abels2} stated below. 

\begin{thm} \label{corollary:Abels1}
For every $n \in \N_{\geq 4}$ and $S \subset \N$ a finite subset of prime numbers, there exists a soluble group $\Gamma(n,S)$ with the following properties: 
\begin{enumerate}
    \item $\Gamma(n,S)$ is finitely presented and has property~$R_\infty$;     
    \item Its finiteness length is $\phi(\Gamma(n,S)) = n-2$;
    \item Its virtual cohomological dimension is a finite number $d(n,S) := \vcd(\Gamma(n,S)) \in \N$. Moreover, $d(n,S) \to \infty$ as $n \to \infty$ or $|S| \to \infty$.
\end{enumerate}
In particular, there exist infinitely many quasi-isometry classes of finitely presented, amenable, non-polycyclic groups with property $R_\infty$.
\end{thm}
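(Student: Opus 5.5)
I will take $\Gamma(n,S) := \mbA_n(\Z[1/N])$, where $N = \prod_{p\in S} p$; that is, $\mbS_n(R)$ with $I=\{2,\ldots,n-1\}$ and $R = \Z[1/N]$. For $S=\varnothing$ one would get a nilpotent group, which does not work: $I$ would act through the trivial units $U(\Z)=\{\pm1\}$, so $\phi=\infty$. I therefore assume $S\neq\varnothing$. This is the reading compatible with $\phi = n-2$.

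Since $n\geq 4$, the set $I=\{2,\ldots,n-1\}$ satisfies the (NG)-condition: indices $1\notin I$ forces $2\in I$, and $n-1\in I$. The unit group $U(R) = \{\pm 1\}\times\Z^{|S|}$ is finitely generated. Moreover $R$ is generated as a $U(R)$-module by $1$, since it is generated additively by the elements $\pm p^{k}$ for $p\in S$. By \Cref{pps:SIfinitelygenerated} the group $\Gamma(n,S)$ is finitely generated. As $\carac R = 0$, \Cref{mainthm} then gives property~$R_\infty$. Solubility is clear from $\mbU_n(R)\rtimes \TI_n(R)$. Being soluble, the group is amenable.

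For the finiteness length I would invoke Abels' theorem and its $S$-arithmetic refinements, as in \cite{Abels} and \cite{YuriSoluble}. These give $\phi(\mbA_n(\Z[1/N])) = n-2$. Since $n\geq4$, we get $\phi\geq 2$, so the group is finitely presented. This is the step I must import rather than prove. The main obstacle is making sure the cited result covers arbitrary nonempty finite $S$ and not just a single prime, as in Abels' original $\mbA_4(\Z[1/p])$. Failing a direct citation, I would argue via the Bieri–Neumann–Strebel–Renz invariants $\Sigma^m$ of $\mbB_n(\Z[1/N])$, regarding $\mbA_n$ as the kernel of a character on the diagonal.

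For the virtual cohomological dimension, $\Gamma$ has a finite-index torsion-free subgroup. One takes units in $\langle p : p\in S\rangle$ on the diagonal, which has index $2^{n-2}$. This subgroup is an extension of $\mbU_n(\Z[1/N])$ by $\Z^{|S|(n-2)}$. For soluble groups of finite Hirsch-type invariants, $\mathrm{cd}$ equals the Hirsch length over $\Q$ plus contributions from the $p$-adic parts. The cleanest route is the Borel–Serre / Serre bound for $S$-arithmetic groups: $\vcd = \dim_{\R} + \sum_{p\in S}\mathrm{rank}_{\Q_p}$-type contributions. For unipotent parts each root group $\Z[1/N]$ has $\mathrm{cd}=2$, since $\mathrm{cd}\,\Z[1/N]=2$. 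Concretely I expect
\[ d(n,S) = 2\binom{n}{2} + |S|(n-2), \]
or at least a quantity bounded below by $\binom{n}{2}+|S|(n-2)$. The lower bound follows from \Cref{lem:vcdsubgroups} using the free abelian subgroup $\Z^{|S|(n-2)}$ of the diagonal times $\Z^{\binom n2}$-type subgroups. Either way $d(n,S)$ is finite, because there is a finite-length extension of groups of finite $\mathrm{cd}$, and it tends to infinity with $n$ or $|S|$. That is all the statement needs.

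Finally, for the quasi-isometry conclusion, fix $n$ and let $|S|$ grow. The groups all have $\phi=n-2\geq2$, so they are finitely presented. They are not polycyclic, since $\phi<\infty$ (\Cref{ex:AbelianFinfty}). By \Cref{lem:vcdQI}, infinitely many distinct finite values of $d(n,S)$ give infinitely many quasi-isometry classes. Alternatively one can vary $n$ and use \Cref{lem:FinLenQI}.
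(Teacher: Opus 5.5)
Your proposal is correct and is essentially the paper's proof. You use the same group $\mbA_n(\Z[1/N])$, \cite[Proposition~4.10]{YuriSoluble} for $\phi=n-2$, \Cref{mainthm} for $R_\infty$, and $d(n,S)\to\infty$ via \Cref{lem:vcdsubgroups} applied separately to $\mbU_n(\Z)$ and to the diagonal $\Z^{|S|(n-2)}$. Your finiteness argument (an explicit torsion-free subgroup of index $2^{n-2}$ plus subadditivity of $\mathrm{cd}$) replaces the paper's appeal to Selberg's lemma and Serre. Your caveat $S\neq\varnothing$ is a genuine point the paper leaves implicit, since $\mbA_n(\Z)$ is polycyclic with $\phi=\infty$.

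Discard your side remarks on the exact value of $d(n,S)$, though; neither is needed for the theorem. First, the product of $\mbU_n(\Z)$ with the diagonal is not a subgroup, because the diagonal does not normalise $\mbU_n(\Z)$. So \Cref{lem:vcdsubgroups} does not give the lower bound $\binom n2+|S|(n-2)$ as you claim. Second, your guess $2\binom n2+|S|(n-2)$ is too large. Your torsion-free finite-index subgroup is countable with homological dimension at most its Hirsch length $\binom n2+|S|(n-2)$, and $\mathrm{cd}\le\mathrm{hd}+1$ for countable groups.
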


\begin{proof}
Let $n$ and $S$ be as in the statement and let 
\[\mb{S}^{\{2,\ldots,n-1\}}_n = \mbA_n = \left( \begin{matrix}
  1 & * & \cdots & \cdots & * \\
  0 & * & \ddots & & \vdots \\
  \vdots & \ddots & \ddots & \ddots & \vdots \\
  0 & & \ddots & * & * \\
  0 & \cdots & \cdots & 0 & 1
 \end{matrix} \right) \leq \GL_n\]
denote the $\Z$-group scheme of H.~Abels. Consider 
\[R_S = \{ x \in \Q \mid \text{ denominators of } x \text{ have prime factors in } S \}.\]

(We remark that $R_S$ is a ring of $S'$-integers --- i.e., a Dedekind domain of $S'$-arithmetic type --- in $\Q$, where $S'$ is the (finite) set of places containing all archimedean places together with the non-archimedean places induced by the $p$-adic valuations with $p\in S$.) 

We set $\Gamma(n,S) := \mbA_n(R_S)$, which is then an $S'$-arithmetic subgroup of $\mbA_n(\Q)$. 
Note that $\Gamma(n,S)$ is non-polycyclic as it contains non-finitely generated subgroups such as $\mbU_n(R_S)$. 

The topological properties for $\Gamma(n,S)$ listed in the statement are known. More explicitly, \cite[Proposition~4.10]{YuriSoluble} gives $\phi(\Gamma(n,S))=n-2$. In particular, $\Gamma(n,S)$ is always finitely presented because $n \geq 4$; cf. \Cref{sec:geometry}. 

The fact that $\Gamma(n,S) = \mbA_n(R_S)$ has property~$\Ri$ follows from \Cref{mainthm}. Indeed, $\mbA_n = \mbS_n$ with $I = \{2,\ldots,n-2\}$. In particular, $I$ trivially satisfies the (NG)-condition \eqref{conditionI}. Since $R_S$ has characteristic zero and $\mbA_n(R_S)$ is finitely generated, the claim follows.

Regarding {the virtual cohomological} dimension, the group $\Gamma(n,S)$ is finitely generated and linear over $\Q$, whence virtually torsion-free by Selberg's lemma and moreover $d(n,S):=\mathrm{vcd}(\Gamma(n,S))<\infty$ by a classic result of Serre~\cite[Th\'eor\`eme~5, page~128]{SerreCohomologie}. To see why $d(n,S)$ increases if we increase $n$ or if we enlarge the set of primes $S$, recall that $\Gamma(n,S) = \mbA_n(R_S)$ fits into the short exact sequence
\[\mbU_n(R_S) \into \Gamma(n,S) \onto \prod_{i=2}^{n-1} \sbgpdi(R_S).\]
The nilpotent part $\mbU_n(R_S)$ contains $\mbU_n(\Z)$ by construction, which has $\vcd(\mbU_n(\Z)) = \binom{n}{2}$. The diagonal part $\prod_{i=2}^{n-1} \sbgpdi(R_S)$ is isomorphic to $n-2$ copies of the group of units $U(R_S)$, which is (finitely generated) abelian with torsion-free rank $|S|$. 
Either way, \Cref{lem:vcdsubgroups} shows that increasing $n$ or $|S|$ forces $d(n,S) = \vcd(\Gamma(n,S))$ to admit a strictly increasing  subsequence (on $n$ or $|S|$, possibly both).  
\end{proof}

Another series of examples shows that combining amenability with infinite virtual cohomological dimension is possible.

\begin{thm} \label{corollary:Abels2}
For every $n \in \N_{\geq 4}$, $\ell \in \N_{\geq 3}$, and $p$ either zero or a prime number, there exists a soluble group $\Gamma(n,\ell,p)$ with the following properties: 
\begin{enumerate}
    \item $\Gamma(n,\ell,p)$ is finitely presented and has property~$R_\infty$; 
    \item If $p = 0$, then $\Gamma(n,\ell,p)$ is linear over $\R$, is virtually torsion-free, but has $\vcd(\Gamma(n,\ell,p)) = \infty$;
    \item If $p$ is a prime, then $\Gamma(n,\ell,p)$ has $p$-torsion, dimension $\vcd(\Gamma(n,\ell,p)) = \infty$, and finiteness length $\phi(\Gamma(n,\ell,p)) = \ell-1$;
\end{enumerate}
In particular, there exist infinitely many quasi-isometry classes of finitely presented amenable groups with property $R_\infty$ and infinite virtual cohomological dimension.
\end{thm}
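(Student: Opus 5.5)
The plan is to take $\Gamma(n,\ell,p) := \mbA_n(R)$ for suitable rings $R$ and read off the topological properties from the literature, exactly as in the proof of \Cref{corollary:Abels1}. Property~$R_\infty$ will then come from \Cref{mainthm}. The set $I=\{2,\ldots,n-1\}$ satisfies the (NG)-condition, so it suffices to check that $R$ is as in \Cref{mainthm} and that $\mbA_n(R)$ is finitely generated.

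For $p$ prime, I would set $R = \F_p[t,f_1(t)^{-1},\ldots,f_{\ell-1}(t)^{-1}]$ with pairwise coprime monic irreducibles $f_i$. This is the ring of $S$-integers of $\F_p(t)$ with $|S|=\ell$ places: the $\ell-1$ places given by the $f_i$ together with the place at infinity. This ring is explicitly allowed in \Cref{mainthm}. Finite generation, and in fact the finiteness length, should follow from the $S$-arithmetic results on Abels' groups in positive characteristic \cite{Bux04,YuriSoluble}. There, $\phi(\mbA_n(\OS))$ is computed as $|S|-1$ once $n$ is large enough relative to $|S|$. The main obstacle is that the bound coming from $n-2$ must not interfere, so I need to check that the cited formula gives $\phi = \ell-1$ for all $n\ge 4$, $\ell\ge 3$. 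If that formula actually reads $\min\{n-2,|S|-1\}$, I would adjust the parameters, or quote \cite[Theorem~1.3]{YuriSoluble} precisely. Finite presentability then follows from $\phi\ge 2$, i.e.\ $\ell\ge 3$.

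For the torsion and dimension claims in positive characteristic, note that $\mbU_n(R)\le\Gamma$ contains $e_{1,2}(1)$, which has order $p$. Moreover, every finite-index subgroup still contains infinitely generated elementary abelian $p$-subgroups, for instance a finite-index subgroup of $e_{1,2}(R)\cong(R,+)$, which is an infinite-dimensional $\F_p$-vector space. Every finite-index subgroup therefore has torsion, so no torsion-free finite-index subgroup exists and $\vcd=\infty$ by definition.

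For $p=0$, I would choose a characteristic-zero domain $R\subset\R$ with $U(R)$ finitely generated, $R$ finitely generated as a $U(R)$-module, and $(R,+)$ of infinite rank, for example $R=\Z[1/N]$ or $\Z[\sqrt2,1/N]$. Then $\mbA_n(R)$ is finitely generated by \Cref{pps:SIfinitelygenerated}, and finitely presented for $n\ge4$ by \cite{Abels,YuriSoluble}. It is linear over $\R$ and virtually torsion-free by Selberg's lemma. To get $\vcd=\infty$ with this choice, however, I must take $R$ not $S$-arithmetic in a number field; otherwise Serre gives finite $\vcd$, as in \Cref{corollary:Abels1}. I would therefore choose a domain such as $R=\Z[x,x^{-1},(1-x)^{-1}]$ with $x\in\R$ transcendental, or another finitely generated ring whose additive group contains $\Z[x]$-type free abelian subgroups of infinite rank. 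Then $e_{1,2}(R)\cong(R,+)$ contains $\Z^m$ for every $m$, so $\vcd=\infty$ by \Cref{lem:vcdsubgroups}. Finite presentability would be verified via Abels' criterion, or the $\Sigma$-invariant results in \cite{YuriSoluble}, using the fact that $R$ is finitely generated over its units. This finite presentability check is where I expect the real work to lie.

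Finally, the quasi-isometry statement follows as before. Distinct values $\phi=\ell-1$ separate the groups by \Cref{lem:FinLenQI}. Amenability holds because all these groups are soluble.
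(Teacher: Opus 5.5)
\textbf{The case $p$ prime has a genuine gap.} Nothing you cite gives $\phi(\mbA_n(R_{p,\ell}))=\ell-1$.

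\begin{itemize}
\item \textbf{What the cited results cover.} Bux's theorem \cite{Bux04}, and its extension \cite[Theorem~1.3]{YuriSoluble}, compute $\phi=|S|-1$ for $S$-arithmetic groups $\mbU_n\rtimes\mathbf{T}$ with $\mathbf{T}$ a \emph{maximal} split torus, i.e.\ Borel subgroups. Abels' group only carries the rank-$(n-2)$ torus $\{d_2,\ldots,d_{n-1}\}$, so these results do not apply to it.
\item \textbf{What is available for Abels' groups.} In positive characteristic, the literature gives finite presentability of $\mbA_n(\OS)$ via \cite[Theorem~1.2(3)]{YuriSoluble}. That is all the paper itself extracts for $\mbA_4(R_f)$ in \Cref{sec:qidAn}; it does not give the exact finiteness length.
\item \textbf{Why $\ell-1$ should fail in general.} The superdiagonal weights of Abels' torus sum to zero, since the corner $e_{1,n}$ is central. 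This is what produces $\phi(\mbA_n(\Z[1/N]))=n-2$ in \cite[Proposition~4.10]{YuriSoluble}, and one expects the same cap $n-2$ here. If the formula is $\min\{n-2,|S|-1\}$, as you suspect, item (iii) fails for every $\ell>n-1$. For example, $n=4$, $\ell=4$ would not give $\phi=3$.
\item \textbf{Adjusting parameters is not allowed.} The statement quantifies over \emph{all} $n\ge 4$ and $\ell\ge 3$.
\end{itemize}

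The missing idea is to change the group rather than the parameters, which is what the paper does. Take $I$ with $|I|=n-1$, so that $\Gamma(n,\ell,p)=\mbS_n(R_{p,\ell})\cong\PB_n(R_{p,\ell})$ by \Cref{lem:isowithPBn}. Such an $I$ still satisfies the (NG)-condition, so \Cref{mainthm} still gives $R_\infty$. But now $\PB_n=\mbU_n\rtimes\mathbf{T}$ with $\mathbf{T}$ a maximal torus of $\PGL_n$. Hence \cite[Theorem~1.3]{YuriSoluble} yields $\phi=\ell-1$ for every $n$. This gives finite presentability (as $\ell\ge 3$), and via \Cref{lem:FinLenQI} the quasi-isometric separation in the final clause. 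Your torsion and $\vcd=\infty$ argument carries over unchanged, since $e_{1,2}(R_{p,\ell})\le\PB_n(R_{p,\ell})$.

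\textbf{The case $p=0$ works, by a different ring than the paper's, but finite presentability is left open.} Note that ``finitely generated over its units'' only yields finite generation, not finite presentation. The paper takes $R_{0,\ell}=\Z[\frac{1}{\ell!},x,x^{-1},(x+1)^{-1},\ldots,(x+\ell)^{-1}]$. It then reduces, via \cite[Theorem~1.2(3) and Lemma~3.2]{YuriSoluble}, to finite presentability of $\Aff(R_{0,\ell})\cong R_{0,\ell}\rtimes U(R_{0,\ell})$, which it obtains from the Kropholler--Mullaney groups and Bieri--Strebel. The same reduction handles your ring $R=\Z[x,x^{-1},(1-x)^{-1}]$ with less machinery. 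The group $\Aff(R)$ contains, with index $2$, the subgroup $R\rtimes\gera{-x,\,1-x}$. Substituting $y=-x$, this is the finitely presented Baumslag--Remeslennikov group $\Z[y,y^{-1},(1+y)^{-1}]\rtimes\gera{y,\,1+y}$. Your remaining $p=0$ claims match the paper: linearity over $\R$, Selberg's lemma, and $\Z[x]\subseteq(R,+)$ forcing $\vcd=\infty$ by \Cref{lem:vcdsubgroups}.
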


\begin{proof}
Let $n \geq 4$ and $\ell \geq 3$ be given. 
We define different groups depending on the parameter $p$. In case $p=0$, we 
shall again take Abels' groups --- choose $I = \{2,\ldots,n-1\}$ so that $\mbS_n = \mbA_n$. Now let $R_{0,\ell}$ denote the following subring of $\R$ with $x \in \R$ transcendental over $\Q$. 
\[R_{0,\ell} = \Z\left[\frac{1}{\ell!},x,x^{-1},(x+1)^{-1},\ldots,(x+\ell)^{-1}\right].\]
In particular, $R_{0,\ell}$ is an integral domain of characteristic zero. We then set $\Gamma(n,\ell,0) = \mbS_n(R_{0,\ell}) = \mbA_n(R_{0,\ell}) \leq \GL_n(\R)$. 

In case $p > 0$ is a prime, for our choice of base ring we start with polynomials $\F_p[t]$ and choose $\ell - 1$ pairwise coprime monic irreducible elements of $\F_p[t]$, say $f_1(t)$, ..., $f_{\ell-1}(t)$. 
Then, form the ring
\[R_{p,\ell} = \F_p[t,f_1(t)^{-1}, \ldots, f_{\ell-1}(t)^{-1}],\]
which is the $S'$-arithmetic subring of $\F_p(t)$ with $S'$ consisting of the (non-archimedean) places induced by the $\infty$-valuation $v_\infty(\frac{g(t)}{h(t)})=\mathrm{deg}(h)-\mathrm{deg}(g)$ and by the $f_i$-valuations on $\F_p(t)$. To form the group, choose any $I \subset \{1, \ldots, n\}$ with exactly $n-1$ elements and set $\Gamma(n,\ell,p) = \mbS_n(R_{p,\ell}) \leq \GL_n(\F_p(t))$. 
Note that $\Gamma(n,\ell,p)\cong \PB_n(R_{p,\ell})$ by \Cref{lem:isowithPBn}. 

Let us check that our groups have the prescribed properties. 
If $p=0$ we argue that we may apply \cite[Theorem~1.2(3)]{YuriSoluble} to see that $\Gamma(n,\ell,0)$ is finitely presented, which states that Abels' group $\Gamma(n,\ell,0) = \mbA_n(R_{0,\ell})$ is finitely presented as long as the upper triangular subgroup $\left( \begin{smallmatrix} * & * \\ 0 & * \end{smallmatrix} \right) \leq \SL_2(R_{0,\ell})$ is so. 
By \cite[Lemma~3.2]{YuriSoluble}, the latter is true if and only if the group
\[\Aff(R_{0,\ell}) := \left( \begin{smallmatrix} * & * \\ 0 & 1 \end{smallmatrix} \right) \leq \GL_2(R_{0,\ell})\]
is finitely presented. But $\Aff(R_{0,\ell})$ is isomorphic to $R_{0,\ell} \rtimes U(R_{0,\ell})$, where $R_{0,\ell}$ is the underlying additive group of the ring and its units $U(R_{0,\ell})$ act on $R_{0,\ell}$ by multiplication. 

In \cite[Corollary on p.~19]{KrophollerMullaney}, Kropholler--Mullaney show that a certain metabelian group $G_\ell$ is of homotopical type $\FPn{\ell +1}$ and so a fortiori of type $\FPn{2}$. By a famous result of Bieri--Strebel \cite[Theorem~5.4]{BieriStrebel}, types $\FPn{2}$ and $\mathtt{F}_2$ (thus finite presentability; cf. \Cref{sec:geometry}) are equivalent in this class, so that $G_\ell$ is finitely presented. 
When looking at the construction of the group $G_\ell$ one can see that it is of the form $A_\ell \rtimes Q_\ell$, where $A_\ell =R_{0,\ell}$ and $Q_\ell$ is a subgroup of $U(R_{0,\ell})$ also acting on $A_\ell$ by multiplication.  It follows that 
$G_\ell =R_{0,\ell} \rtimes Q_\ell $ is a (normal) subgroup of $R_{0,\ell} \rtimes U(R_{0,\ell})$ and 
$(R_{0,\ell} \rtimes U(R_{0,\ell})) / G_\ell\cong U(R_{0,\ell})/Q_\ell$ is finitely generated and abelian (because $U(R_{0,\ell})$ itself is finitely generated abelian). As noted in \Cref{ex:AbelianFinfty}, a finitely generated abelian group has unbounded finiteness length. Therefore, we see that $\Aff(R_{0,\ell})\cong R_{0,\ell} \rtimes U(R_{0,\ell})$ is an extension of $G_\ell$ by $U(R_{0,\ell})/Q_\ell$, where the two groups have $\phi(G_\ell)\geq 2$ and $\phi(U(R_{0,\ell})/Q_\ell)\geq 2$. We thus conclude by \Cref{lem:finpropextensions} that $\Aff(R_{0,\ell}) \cong R_{0,\ell} \rtimes U(R_{0,\ell})$ --- thus also $\left( \begin{smallmatrix} * & * \\ 0 & * \end{smallmatrix} \right) \leq \SL_2(R_{0,\ell})$ by \cite[Lemma~3.2]{YuriSoluble} --- is of type $\Fn{2}$, i.e., finitely presented.

We may therefore invoke \cite[Theorem~1.2(3)]{YuriSoluble} 
as claimed, so that $\Gamma(n,\ell,0) = \mbA_n(R_{0,\ell})$ is finitely presented. 

For $p > 0$ a prime, our groups are $\Gamma(n,\ell,p) = \mbS_n(R_{p,\ell})$ with $|I|=n-1$, in which case they are isomorphic to $\PB_n(R_{p,\ell})$ by \Cref{lem:isowithPBn}. As mentioned earlier, this is an $S'$-arithmetic group (in positive characteristic) with $|S'| = 1+\ell-1=\ell$. The underlying algebraic group is $\PB_n = \mbU_n \rtimes \mbf{T}$, where $\mbf{T}$ is a standard maximal torus of the group scheme $\PGL_n$ (which is split over $\Z$), and $\mbU_n$ is the unipotent radical. We may therefore apply \cite[Theorem~1.3]{YuriSoluble} to conclude that $\phi(\Gamma(n,\ell,p)) = \ell-1$, as desired. As $\ell \geq 3$, the group $\Gamma(n,\ell,p)$ is thus always finitely presented.

With $\Gamma(n,\ell,p)$ being finitely presented (thus finitely generated) and $R_{p,\ell}$ being an integral domain, for $p \geq 0$, \Cref{mainthm} implies that $\Gamma(n,\ell,p) = \mbS_n(R_{p,\ell})$ has property~$R_\infty$ as we chose $I$ to clearly fulfil the (NG)-condition~\eqref{conditionI}.

It remains to check that $\vcd(\Gamma(n,p,\ell)) = \infty$. We start with $p=0$. Since 
\[U(R_{0,\ell}) = \set{\pm p_1^{a_1} \cdots p_k^{a_k} x^{b_1} (x+1)^{b_2} \cdots (x+\ell)^{b_{\ell+1}} \mid a_i,b_j \in \Z, p_m \in S},\]
where $S$ is the (finite) set of prime factors of $\ell!$, we see that the subgroup $\mbA_n(R_{0,\ell})^+ \leq \Gamma(n,\ell,0)$ with no negative signs on the diagonal has index $[\Gamma(n,\ell,0) : \mbA_n(R_{0,\ell})^+] = 2^{n-2} < \infty$. Moreover, $\mbA_n(R_{0,\ell})^+$ is torsion-free by construction. However, since $\Z[x] \subset R_{0,\ell}$ and $R_{0,\ell}$ embeds into $\mbA_n(R_{0,\ell})^+$ by means of the elementary subgroups $\gera{e_{i,j}(r) \mid r \in R_{0,\ell}}$, it follows that $\mbA_n(R_{0,\ell})^+$ contains free abelian subgroups of arbitrarily large rank. Thus $\mathrm{vcd}(\Gamma(n,\ell,0)) = \mathrm{cd}(\mbA_n(R_{0,\ell})^+) = \infty$ by \Cref{lem:vcdsubgroups}.

For $p>0$ a prime, we claim that any proper subgroup of finite index in $\Gamma(n,\ell,p) \cong \PB_n(R_{p,\ell})$ contains $p$-torsion and hence $\vcd(\Gamma(n,\ell,p))=\infty$. So let $H \leq \PB_n(R_{p,\ell})$ be such a subgroup. The intersection $H \cap \mbU_n(R_{p,\ell})$ thus also has finite index in $\mbU_n(R_{p,\ell})$. But the group $\mbU_n(R_{p,\ell})$ is locally $p$-finite (i.e., every finitely generated subgroup is a finite $p$-group), hence $H \cap \mbU_n(R_{p,\ell})$ contains $p$-torsion elements, so that $\vcd(\Gamma(n,\ell,p))=\infty$ (cf. \Cref{sec:geometry}).  
\end{proof}

\section{Automorphisms of special quotients of Abels' groups, and the proof of Theorem~\ref{diversemainthm}}\label{sec:Abels}
Throughout this section, we denote by $\mbA_4(R)$ the Abels' group $\mathbf{S}^{\{2,3\}}_4(R)$.

In \Cref{sec:fundamentallemma}, we consider the quotients
\[\ombA = \ombU \rtimes (\mathcal{D}_2(R) \times \mathcal{D}_3(R)),\]
where $\ombU= \mbU_4(R)/\mbU_4'(R)$. More precisely, we show that any automorphism of $\overline{\mbA_4(R)}$ induces a permutation of its superdiagonal entries, combined with the application of a homomorphism of $(R,+)$ to each.

 This is used to show the main result of \Cref{sec:notpenultimate}. Specifically, in this section we show that the quotient of $\mbA_4(R_f)$
\[\overline{\overline{\mbA_4(R_f)}}=\frac{\mbA_4(R_f)}{ Z(\mbU_4(R_f))}\]
has property $R_\infty$, where $R_f=\F_2[t,t^{-1},f(t)^{-1}]$ and $f(t) \in \mathbb{F}_2[t]\setminus \{t,\, t{-1}\}$ is an irreducible polynomial  
that is not self-reciprocal.

We then move to the proof of \Cref{diversemainthm}. Part~\eqref{diversemainthm2} is checked in \Cref{sec:qidAn} and uses the fact that $\overline{\overline{\mbA_4(R_f)}}$ has~$\Ri$. Part~\eqref{diversemainthm1} is verified in \Cref{sec:qidNonamenable}.

\subsection{Induced automorphisms on the metabelian top}\label{sec:fundamentallemma}

For a commutative ring $R$, let $\mbA_4(R)$ denote Abels' group $\mathbf{S}^{\{2,3\}}_4(R)$. Here, we consider the quotient 
\[\ombA = \ombU \rtimes (\mathcal{D}_2(R) \times \mathcal{D}_3(R)),\]
where $\ombU= \mbU_4(R)/\mbU_4'(R)$. We will show that any automorphism of $\overline{\mbA_4(R)}$  induces a permutation of the superdiagonal entries, combined with the application of a homomorphism of $(R,+)$ to each entry.

\medskip

\noindent{\bf Notation:} From now on, we will use boldface symbols $\bekl{i,j}(r)$ and $\bd{i}(u)$ to denote the natural projections of the elements $\ekl{i,j}(r)$ and $d_i(u)$ in the quotient group $\overline{\mbA_4(R)}= \mathbf{S}^{\{2,3\}}_4(R)/\mbU_4'(R)$. 

\medskip

\begin{lem}\label{lem:char}
    $\overline{\mbU_4(R)}$ is characteristic in $\overline{\mbA_4(R)}$.
\end{lem}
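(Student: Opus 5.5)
We plan to imitate the proof of \Cref{pps:char} and identify $\ombU$ intrinsically as a maximal normal locally nilpotent subgroup, here in fact abelian. Since $\mbU_4'(R)$ is characteristic in $\mbU_4(R)$ and $\mbU_4(R)$ is normal in $\mbA_4(R)$, the quotient $\ombA = \ombU \rtimes (\mathcal{D}_2(R)\times\mathcal{D}_3(R))$ is a well-defined metabelian group, and $\ombU \cong R^3$ is an abelian normal subgroup. The goal is to show that $\ombU$ is the Hirsch--Plotkin radical of $\ombA$. Because that radical is characteristic, this gives the lemma.

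First, if $U(R)=\{1\}$, then $\ombA=\ombU$ and there is nothing to prove, so assume $R$ has a unit $u\neq 1$. Let $M$ be a normal locally nilpotent subgroup of $\ombA$ and suppose $M\not\subseteq \ombU$. Then $\ombU M$ is again normal and locally nilpotent (Hirsch--Plotkin), and it contains a nontrivial diagonal element
\[d=\bd{2}(u_2)\bd{3}(u_3),\]
with $u_2\neq 1$ or $u_3\neq 1$.

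Next we look for a superdiagonal root on which $d$ acts by a factor $\lambda\neq 1$. By \eqref{conjdiag}, $d$ acts on $\bekl{1,2}(r)$ by multiplication by $u_2^{-1}$, on $\bekl{3,4}(r)$ by $u_3$, and on $\bekl{2,3}(r)$ by $u_2u_3^{-1}$. If $u_2\neq 1$, use $\bekl{1,2}(1)$; otherwise $u_3\neq 1$ and we use $\bekl{3,4}(1)$. The commutator computation from the proof of \Cref{pps:char}, carried out in the abelian group $\ombU\cong R^3$, gives
\[[\bekl{i,i+1}(1),_\ell d]=\bekl{i,i+1}((1-\lambda)^\ell)\]
with $\lambda\in\{u_2^{-1},u_3\}\setminus\{1\}$. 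Here the entry $(i,i+1)$ survives modulo $\mbU_4'(R)$ precisely because it is a superdiagonal position.

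Since $R$ is an integral domain, $(1-\lambda)^\ell\neq 0$ for every $\ell$, so these iterated commutators never vanish. The subgroup $\langle \bekl{i,i+1}(1), d\rangle$ is finitely generated and lies in $\ombU M$, but it is not nilpotent, contradicting local nilpotency. Hence every normal locally nilpotent subgroup is contained in $\ombU$, so $\ombU$ is the unique maximal such subgroup and is therefore characteristic.

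The main point to check is the case split that produces a superdiagonal root with nontrivial weight. The index set $\{2,3\}$ guarantees this, since the roots $(1,2)$ and $(3,4)$ detect $u_2$ and $u_3$ separately. Unlike in \Cref{pps:char}, we cannot use the non-superdiagonal root $(i,j)$ with $i\in I$, $j\notin I$, because those entries die in the quotient.
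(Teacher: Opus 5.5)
Your proposal is correct and follows essentially the same route as the paper. Both show that $\ombU$ is the Hirsch--Plotkin radical by finding a nontrivial $\bd{2}(u_2)\bd{3}(u_3)$ in $\ombU M$ and a superdiagonal root on which it acts with weight $\lambda\neq 1$, so that $[\bekl{i,j}(1),_\ell \bd{}]=\bekl{i,j}((1-\lambda)^\ell)\neq 1$ for all $\ell$. The only cosmetic difference is the case split: the paper uses $\bekl{3,4}$ when $u_3\neq 1$ and $\bekl{2,3}$ (weight $u_2$) when $u_3=1$, whereas you use $\bekl{1,2}$ (weight $u_2^{-1}$) when $u_2\neq 1$ and $\bekl{3,4}$ otherwise; both choices work.
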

\begin{proof}
    We can proceed exactly as in the proof of \Cref{pps:char} and show that $\overline{\mbU_4(R)}$ is the Hirsch--Plotkin radical of $\overline{\mbA_4(R)}$. If $\overline{\mbU_4(R)}$ were not the Hirsch--Plotkin radical, then there would be a normal subgroup $\barra{M}$ of $\overline{\mbA_4(R)}$ strictly containing $\overline{\mbU_4(R)}$ which is also locally nilpotent. This subgroup must contain a non-trivial element of the form $\bd{}=\bd{2}(u_2)\bd{3}(u_3)$. If $u_3\neq 1 $, then by the same argument as in \Cref{pps:char} one can show that the iterated commutator $[\bekl{3,4}(1),_\ell \bd{}]=\bekl{3,4}((1-u_3)^\ell) \neq 1 $, which contradicts the local nilpotency of $\barra{M}$. If, $u_3=1$ then $u_2\neq 1$ and in this case we obtain that $[\bekl{2,3}(1),_\ell \bd{}]=\bekl{2,3}((1-u_2)^\ell) \neq 1 $ which again contradicts the local nilpotency of $\barra{M}$.
\end{proof}

First, fix an automorphism $\varphi$ of $\overline{\mbA_4(R)}= \overline{\mbU_4(R)} \rtimes (\mathcal{D}_2(R) \times \mathcal{D}_3(R))$. Since $\overline{\mbU_4(R)}$ is characteristic in $\overline{\mbA_4(R)}$, it follows that for each $r \in R$ and each pair $(i,j) \in \{(1,2),\, (2,3), \, (3,4)\}$, 
\[
\varphi(\bekl{i,j}(r)) = \bekl{1,2}(\Phi^{1,2}_{i,j}(r))\, \bekl{2,3}(\Phi^{2,3}_{i,j}(r))\, \bekl{3,4}(\Phi^{3,4}_{i,j}(r)).
\]

The next lemma shows that such $\varphi$ acts more simply on the superdiagonal by permuting its entries while applying {an additive automorphism} to each.

\begin{lem} \label{lem:fundamental} 
Let $\varphi$ be an automorphism of $\overline{\mbA_4(R)}$ as above. Then, there exist $\sigma \in \Sym(\{(1,2),(2,3),(3,4)\})$ and group automorphisms 
\[\Phi_{1,2}, \, \Phi_{2,3}, \, \Phi_{3,4}:(R,+) \to (R,+)\] such that
\[\varphi(\bekl{i,j}(r))=\bekl{\sigma(i,j)}(\Phi_{i,j}(r)),\]
for all $(i,j) \in \{(1,2),(2,3),(3,4)\}$ and $r\in R$.
\end{lem}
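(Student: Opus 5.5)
The plan is to avoid computing the maps $\Phi^{k,l}_{i,j}$ directly. Instead, I would characterise the three root subgroups $E_{i,j}=\{\bekl{i,j}(r)\mid r\in R\}$, for $(i,j)\in\{(1,2),(2,3),(3,4)\}$, intrinsically inside $\ombA$ as fixed-point subgroups of conjugation, and then use that $\varphi$ must permute such subgroups. Throughout I assume $U(R)\neq\{1\}$. This holds in the intended application $R=R_f$, and the lemma genuinely needs it: if $U(R)=\{1\}$, then $\ombA=\ombU\cong R^3$ and any additive automorphism of $R^3$ is an automorphism.

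\emph{Step 1: the conjugation action factors through the torus.} Since $\ombU\cong R^3$ is abelian and characteristic by \Cref{lem:char}, conjugation gives an action of $T:=\ombA/\ombU\cong U(R)^2$ on $\ombU$. By Relation~\eqref{conjdiag}, the element $t=\bd{2}(u)\bd{3}(v)$ acts on the three coordinates by the characters
\[\chi_{1,2}(t)=u^{-1},\qquad \chi_{2,3}(t)=uv^{-1},\qquad \chi_{3,4}(t)=v.\]
Note that $\chi_{1,2}\chi_{2,3}\chi_{3,4}=1$. The automorphism $\varphi$ induces some $\theta\in\Aut(T)$, and restricting to $\ombU$ we get the semilinearity
\[\varphi(t\cdot x)=\theta(t)\cdot\varphi(x)\quad\text{for } x\in\ombU.\]
In particular $\varphi(C(t))=C(\theta(t))$, where $C(t)=\{x\in\ombU\mid t\cdot x=x\}$.

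\emph{Step 2: the fixed-point subgroups are exactly the root subgroups.} Since $R$ is a domain, $(\chi(t)-1)r=0$ forces $\chi(t)=1$ or $r=0$. Hence $C(t)$ is the product of those $E_{i,j}$ with $\chi_{i,j}(t)=1$. By the product relation above, if two characters are trivial then so is the third. So the nonzero proper subgroups of the form $C(t)$ are exactly single lines $E_{i,j}$. Each line does occur once a unit $w\neq1$ is chosen:
\begin{itemize}
\item $E_{1,2}=C(\bd{3}(w))$;
\item $E_{3,4}=C(\bd{2}(w))$;
\item $E_{2,3}=C(\bd{2}(w)\bd{3}(w))$.
\end{itemize}
Since $\theta$ is a bijection of $T$ and $\varphi$ is injective, $\varphi$ maps the family $\{C(t)\}_{t\in T}$ onto itself and preserves "nonzero proper". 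Therefore $\varphi$ permutes $\{E_{1,2},E_{2,3},E_{3,4}\}$, which defines $\sigma$.

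\emph{Step 3: conclusion.} Define $\Phi_{i,j}$ by $\varphi(\bekl{i,j}(r))=\bekl{\sigma(i,j)}(\Phi_{i,j}(r))$. It is additive because $\varphi$ is a homomorphism and $\bekl{i,j}(r)\bekl{i,j}(s)=\bekl{i,j}(r+s)$. It is bijective because $\varphi$ restricts to an isomorphism $E_{i,j}\to E_{\sigma(i,j)}$, as the same argument applies to $\varphi^{-1}$.

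The main obstacle is Step 2. One must make sure that the diagonal-type conjugation genuinely separates the three coordinates, in particular that $E_{2,3}$ arises as a fixed set, which needs $u=v\neq 1$. This is exactly where the non-triviality of $U(R)$ and the integral-domain hypothesis are used.
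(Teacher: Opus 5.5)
Your proof is correct and is essentially the paper's argument. The paper also applies $\varphi$ to the relations $\mu(\bd{2}(u),\bekl{3,4}(r))=\bekl{3,4}(r)$, $\mu(\bd{3}(u),\bekl{1,2}(r))=\bekl{1,2}(r)$ and $\mu(\bd{2}(u)\bd{3}(u),\bekl{2,3}(r))=\bekl{2,3}(r)$, and uses the domain property plus the non-triviality of the induced torus element to force each root line into a single line; your formulation via $C(t)$ and $\chi_{1,2}\chi_{2,3}\chi_{3,4}=1$ packages that case analysis more cleanly and gives $\varphi(E_{i,j})=E_{\sigma(i,j)}$ directly, while your hypothesis $U(R)\neq\{1\}$ is exactly what the paper uses tacitly when it fixes $u\in U(R)\setminus\{1\}$, and your abelian counterexample rightly shows it cannot be dropped.
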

{\begin{rmk}
There is a slight abuse of notation in the statement of the above lemma. As $\sigma(i,j)$ is of the form 
$(k,l)$, we should read $\bekl{\sigma(i,j)}=\bekl{{(k,l)}}$ as being the same as $\bekl{k,l}$.
\end{rmk}}
\begin{proof}
First, consider the maps $\Phi_2, \Phi_3 \colon U(R) \to U(R)$ that satisfy for each $u \in U(R)$
\[ \varphi(\bd{2}(u)) = \bd{2}(\Phi_2(u))\bd{3}(\Phi_3(u))E(u)\]
where $E(u) \in \overline{\mbU_4(R)} = \mbU_4(R)/\mbU'_4(R)$ is the unipotent part of $\varphi(\bd{2}(u))$.
Fix $u \in U(R)\setminus \{1\}$. For all $r \in R$, by applying $\varphi$ to both sides of 
\[\mu(\bd{2}(u), \bekl{3,4}(r))= \bekl{3,4}(r),\]
we obtain 
\[\mu\left(\bd{2}(\Phi_2(u))\bd{3}(\Phi_3(u)), \, \prod_{(i,j)}\bekl{i,j}(\Phi^{i,j}_{3,4}(r)) \right)=\prod_{(i,j)}\bekl{i,j}(\Phi^{i,j}_{3,4}(r)),\]
where the product ranges over $(i,j) \in \{(1,2),(2,3),(3,4)\}$.
From this, we conclude 
\[\bekl{1,2}(\Phi_2(u)^{-1}\Phi^{1,2}_{3,4}(r)) \, \bekl{2,3}(\Phi_2(u)\Phi_3(u)^{-1}\Phi^{2,3}_{3,4}(r)) \, \bekl{3,4}(\Phi_3(u)\Phi^{3,4}_{3,4}(r))\]
equals
\[\bekl{1,2}(\Phi^{1,2}_{3,4}(r)) \, \bekl{2,3}(\Phi^{2,3}_{3,4}(r)) \, \bekl{3,4}(\Phi^{3,4}_{3,4}(r)).\] Thus, for each $r$, we must have 
\begin{equation}\label{eq:threefour}
    \begin{cases}
    \Phi_2(u)^{-1}\Phi^{1,2}_{3,4}(r) & = \Phi^{1,2}_{3,4}(r), \\
    \Phi_2(u)\Phi_3(u)^{-1}\Phi^{2,3}_{3,4}(r) & = \Phi^{2,3}_{3,4}(r), \\
    \Phi_3(u)\Phi^{3,4}_{3,4}(r) & = \Phi^{3,4}_{3,4}(r).
\end{cases}
\end{equation}
Since $\varphi$ is an automorphism, there exists $r \in R$ such that 
\[\Phi^{i,j}_{3,4}(r) \neq 0, \quad \text{  for some } (i,j) \in \{(1,2),(2,3),(3,4)\}.\]
In particular, the cases~\eqref{eq:threefour} imply
\begin{equation}\label{eq:threefour2}
    \begin{cases}
    \Phi_2(u) = 1, & \text{ if } (i,j)=(1,2), \\
    \Phi_2(u)= \Phi_3(u), & \text{ if } (i,j)=(2,3), \\
    \Phi_3(u)=1, & \text{ if } (i,j)=(3,4). 
\end{cases}
\end{equation}
We now show that for all $s \in R$, we must have $\Phi_{3,4}^{k,\ell}(s)=0$ whenever $(k,\ell) \neq (i,j)$, showing the main claim for $\varphi(\bekl{3,4}(r))$. 

Suppose on contrary that there exists $s \in R$ such that 
\[\Phi^{k,\ell}_{3,4}(s) \neq 0,\]
for some $(k,\ell)\neq (i,j)$ in $\{(1,2),(2,3),(3,4)\}$. Then note that this also implies the cases~\eqref{eq:threefour2} depending on $(k,\ell)$.

Now, suppose that $(i,j)=(1,2)$. Then, by equation~\eqref{eq:threefour2} we have $\Phi_2(u)=1$. Since $\varphi$ is an automorphism, it follows that $\Phi_3(u)\neq 1$ and, in particular, $\Phi_3(u) \neq \Phi_2(u)$. In turn, this implies that $(k,\ell)\notin \{(2,3), (3,4)\}$. However, by assumption $(k,\ell)\neq (1,2)$, hence this case cannot occur. The case $(i,j)=(3,4)$ follows from analogous arguments. 

If $(i,j)=(2,3)$, then by equation~\eqref{eq:threefour2} we have $\Phi_2(u)=\Phi_3(u)$. Since $\varphi$ is an automorphism, $\Phi_2(u)=\Phi_3(u)\neq 1$. Hence, $(k,\ell)\notin \{(1,2), (3,4)\}$, and we conclude that this case is also not possible. 
Consequently $\varphi(\bekl{3,4}(r))=\bekl{\sigma(3,4)}(\Phi^{\sigma(3,4)}_{3,4}(r))$ for some 
$\sigma(3,4) \in \{ (1,2),(2,3),(3,4)\}$. 

Following the same steps, but using the relations
\begin{align*}
    \mu(\bd{3}(u), \bekl{1,2}(r)) &= \bekl{1,2}(r), \\
    \mu(\bd{2}(u)\bd{3}(u), \bekl{2,3}(r)) &= \bekl{2,3}(r),
\end{align*}
we conclude respectively that 
\[\varphi(\bekl{1,2}(r))=\bekl{\sigma(1,2)}(\Phi^{\sigma(1,2)}_{1,2}(r)), \qquad \varphi(\bekl{2,3}(r))=\bekl{\sigma(2,3)}(\Phi^{\sigma(2,3)}_{2,3}(r))\]
for some $\sigma(1,2), \sigma(2,3) \in \{(1,2),(2,3),(3,4)\}$.

Finally, $\varphi$ being an automorphism forces $\{\sigma(1,2), \sigma(2,3), \sigma(3,4)\}=\{(1,2),(2,3),(3,4)\}$ and so $\sigma \in \Sym(\{(1,2),(2,3),(3,4)\})$. For simplicity, we then write simply $\Phi_{i,j}$ instead of $\Phi^{\sigma(i,j)}_{i,j}$.

The fact that each $\Phi_{i,j}$ is bijective follows immediately from the bijectivity of $\varphi$. It is a group homomorphism of the additive group $(R,+)$ because for any $r_1, r_2 \in R$, we have 
\begin{align*} \bekl{\sigma(i,j)}(\Phi_{i,j}(r_1)+\Phi_{i,j}(r_2)) & =\bekl{\sigma(i,j)}(\Phi_{i,j}(r_1))\bekl{\sigma(i,j)}(\Phi_{i,j}(r_2)) \\
& =\varphi(\bekl{i,j}(r_1))\varphi(\bekl{i,j}(r_2)) \\ 
& =\varphi(\bekl{i,j}(r_1+r_2))\\
&= \bekl{\sigma(i,j)}(\Phi_{i,j}(r_1+r_2)).
\end{align*}
\end{proof}

\subsection{A special quotient of an Abels' group with $R_\infty$} \label{sec:notpenultimate}

We now consider a larger quotient of $\mbA_4(R)$, namely the quotient:
\[ \overline{\overline{\mbA_4(R)}}=\frac{\mbA_4(R)}{ Z(\mbU_4(R))}.\]

\medskip

\noindent{\bf Notation:} From now on, we will use the symbols $\cekl{i,j}(r)$ and $\cd{i}(u)$ to denote the natural projections of the elements $\ekl{i,j}(r)$ and $d_i(u)$ in the quotient group $\overline{\overline{\mbA_4(R)}}$.  

\medskip
\begin{pps}\label{e23-is-fixed}
Let $\varphi\in \Aut( \overline{\overline{\mbA_4(R)}}) $, then there exists an automorphism $\Phi_{2,3}$
of the additive group $(R,+)$ such that 
\[ \varphi(\cekl{2,3}(r)) = \cekl{2,3}(\Phi_{2,3}(r)) \mbf{u}(r) ,\]
where $\mbf{u}(r)\in \frac{\mbU'_4(R)}{ Z(\mbU_4(R))}$
\end{pps}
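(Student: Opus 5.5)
To prove the statement, I will reduce it to \Cref{lem:fundamental} applied to an automorphism induced on $\overline{\mbA_4(R)}$. The point is that $\overline{\overline{\mbA_4(R)}}$ surjects onto $\overline{\mbA_4(R)} = \mbA_4(R)/\mbU_4'(R)$, because $Z(\mbU_4(R)) = \{e_{1,4}(r)\} \leq \mbU_4'(R)$. So I first check that the kernel $\mbU_4'(R)/Z(\mbU_4(R))$ of this projection is characteristic in $\overline{\overline{\mbA_4(R)}}$. I expect the image $\overline{\overline{\mbU_4(R)}}$ of $\mbU_4(R)$ to be characteristic in $\overline{\overline{\mbA_4(R)}}$, proved exactly as \Cref{lem:char}: it is the Hirsch--Plotkin radical, and the iterated commutators with $\cekl{2,3}(1)$ or $\cekl{3,4}(1)$ do not die, since these positions survive modulo $Z(\mbU_4(R))$. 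The derived subgroup of this radical is $\mbU_4'(R)/Z(\mbU_4(R))$, and it is therefore characteristic too. Consequently $\varphi$ induces an automorphism $\bar\varphi$ of $\overline{\mbA_4(R)}$.

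By \Cref{lem:fundamental}, $\bar\varphi(\bekl{i,j}(r)) = \bekl{\sigma(i,j)}(\Phi_{i,j}(r))$ for some permutation $\sigma$ and additive automorphisms $\Phi_{i,j}$. It then suffices to show $\sigma(2,3) = (2,3)$. Lifting back to $\overline{\overline{\mbA_4(R)}}$ gives $\varphi(\cekl{2,3}(r)) = \cekl{2,3}(\Phi_{2,3}(r))\,\mbf{u}(r)$ with $\mbf{u}(r)$ in the kernel $\mbU_4'(R)/Z(\mbU_4(R))$. This is exactly the claim.

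The main step is excluding $\sigma(2,3) \in \{(1,2),(3,4)\}$, and I want an intrinsic property that distinguishes $\cekl{2,3}$. Modulo the centre, the only surviving commutators in $\overline{\overline{\mbU_4(R)}}$ are
\[
[\cekl{1,2}(r),\cekl{2,3}(s)] = \cekl{1,3}(rs) \quad\text{and}\quad [\cekl{2,3}(r),\cekl{3,4}(s)] = \cekl{2,4}(rs),
\]
while $\cekl{1,2}$ and $\cekl{3,4}$ now commute, because their commutator $\cekl{1,4}$ is trivial. So in the abelianisation picture, $\cekl{2,3}$ is the unique superdiagonal direction that fails to commute with both of the others.

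I plan to make this rigorous through centralisers inside the characteristic subgroup $\overline{\overline{\mbU_4(R)}}$. The centraliser of $\cekl{1,2}(1)$ contains all $\cekl{3,4}(s)$, $\cekl{1,2}(s)$ and $\cekl{1,3}$, $\cekl{2,4}$, so its image in $\overline{\mbU_4(R)}$ is $\{\bekl{1,2}(\ast)\bekl{3,4}(\ast)\}$. For $\cekl{2,3}(1)$ the image is only $\bekl{2,3}(\ast)$, since an element $\cekl{1,2}(a)\cekl{2,3}(b)\cekl{3,4}(c)\cdots$ commutes with $\cekl{2,3}(1)$ iff $a = c = 0$, using that $R$ is a domain.

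Now suppose $\sigma(2,3) = (1,2)$. Then $\varphi(\cekl{2,3}(1))$ projects to $\bekl{1,2}(x)$ with $x \neq 0$. Its centraliser's image must be the $\bar\varphi$-image of the one-dimensional direction $\bekl{2,3}(\ast)$, which is $\bekl{1,2}(\ast)$. But $\cekl{1,2}(x)$ commutes with all $\cekl{3,4}(s)$, so its centraliser projects onto at least $\{\bekl{1,2}(\ast)\bekl{3,4}(\ast)\}$, a contradiction. Care is needed because $\varphi(\cekl{2,3}(1))$ carries an extra factor in $\cekl{1,3},\cekl{2,4}$. Those entries are central modulo $Z(\mbU_4(R))$, however, since $[\cekl{1,3},\cekl{3,4}] = \cekl{1,4} = 1$, so the centraliser computation is unaffected. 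The case $\sigma(2,3) = (3,4)$ is symmetric. I expect this bookkeeping with the commutator entries to be the only delicate point.
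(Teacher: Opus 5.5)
Your proposal is correct and follows the paper's proof. Like the paper, you show that $\mbU_4(R)/Z(\mbU_4(R))$, and hence $\mbU_4'(R)/Z(\mbU_4(R))$, is characteristic, apply \Cref{lem:fundamental} to the induced automorphism of $\overline{\mbA_4(R)}$, and rule out $\sigma(2,3)\in\{(1,2),(3,4)\}$ using that $\mbU_4'(R)/Z(\mbU_4(R))$ is central in $\mbU_4(R)/Z(\mbU_4(R))$; the paper does this last step with the single commutator $[\cekl{2,3}(1),\cekl{i,j}(1)]\neq 1$ (where $\{\sigma(i,j),\sigma(2,3)\}=\{(1,2),(3,4)\}$) having trivial image, which is your centraliser argument unpacked.

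One small inaccuracy: $e_{1,2}(r)$ and $e_{3,4}(s)$ already commute in $\mbU_4(R)$ by \eqref{rel:commutatorsU}, since their commutator is trivial rather than $e_{1,4}$. What passing modulo $Z(\mbU_4(R))$ actually buys is that $\cekl{1,3}$ and $\cekl{2,4}$ become central, which you correctly use at the end.
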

\begin{proof} {In exactly the same way as in \Cref{lem:char} one can show that $\mbU_4(R)/Z(\mbU_4(R))$ is 
characteristic in $\overline{\overline{\mbA_4(R)}}$ and so also 
 $\mbU'_4(R)/Z(\mbU_4(R))$ is characteristic in $\overline{\overline{\mbA_4(R)}}
$.} Hence $\varphi$ induces an automorphism on 
$\overline{\overline{\mbA_4(R)}}/(\mbU'_4(R)/Z(\mbU_4(R)))$ which is naturally isomorphic to 
$\overline{\mbA_4(R)}$. Using \Cref{lem:fundamental} we find that 
there exist $\sigma \in \Sym(\{(1,2),(2,3),(3,4)\})$ and group automorphisms 
\[\Phi_{1,2}, \, \Phi_{2,3}, \, \Phi_{3,4}:(R,+) \to (R,+)\] 
and maps $\mbf{u}_{i,j}:R \to \mbU'_4(R)/Z(\mbU_4(R))$ such that
\[\varphi(\cekl{i,j}(r))=\cekl{\sigma(i,j)}(\Phi_{i,j}(r))\mbf{u}_{i,j}(r) ,\]
for all $(i,j) \in \{(1,2),(2,3),(3,4)\}$ and $r\in R$.

We have to show that $\sigma(2,3)=(2,3)$. Suppose that $\sigma(2,3)\neq (2,3)$.
So $\sigma(2,3)$ is either $(1,2)$ or $(3,4)$. Let $(i,j)\in \{ (1,2) , (3,4)\}$ be such that 
\[ \{ \sigma(i,j), \sigma(2,3)\}= \{ (1,2), (3,4)\}.\]
Note that $\left[ \cekl{2,3}(1) , \cekl{i,j}(1) \right] \neq 1$. However, using the fact that $\mbU'_4(R)/Z(\mbU_4)$ is central in  $\mbU_4(R)/Z(\mbU_4(R))$, we find that 
\begin{eqnarray*}
\lefteqn{\varphi[\cekl{2,3}(1) , \cekl{i,j}(1) ]}\\
& = & 
[ \cekl{\sigma(2,3)}(\Phi_{2,3}(1)) \mbf{u}_{2,3}(1), 
\cekl{\sigma(i,j)}(\Phi_{i,j}(1)) \mbf{u}_{i,j}(1)]\\
& = & [\cekl{\sigma(2,3)}(\Phi_{2,3}(1)),\cekl{\sigma(i,j)}(\Phi_{i,j}(1)) ]\\
& = & 1\mbox{ (since }[e_{1,2}(r), e_{3,4}(s)] =1, \; \forall r,s \in R).
\end{eqnarray*}
This contradicts the fact that $\varphi$ is an automorphism, which finishes the proof.
\end{proof}

For the rest of this section, we consider the ring $R_f=\F_2[t,t^{-1},f(t)^{-1}]$, where $f(t) \in \mathbb{F}_2[t]\setminus \{t, \, t^{-1}\}$ is an irreducible polynomial.

Consider Abels' group $\mbA_4(R_f)$. The main objective of this section is to show that the quotient 
\[\overline{\overline{\mbA_4(R_f)}}=\frac{\mbA_4(R_f)}{ Z(\mbU_4(R_f))}\]
has property $R_\infty$ for appropriate choices of $f$.

To this end, we begin with a lemma concerning the ring automorphisms of~$R_f$. 
Recall that the reciprocal $f_r(t)$ of a polynomial $f(t)\in \F_2[t]$ of degree $n$ is defined by $f_r(t) = t^n f(1/t)$, and that $f(t)$ is self-reciprocal if $f_r(t)=f(t)$. 
E.g.\ when $f(t)= 1 + t + t^3$, then $f_r(t) = 1+ t^2+ t^3$, so $f(t)$ is not self-reciprocal. Moreover, it is irreducible, and hence $f(t)$ satisfies the conditions of the following lemma.  
\begin{lem}\label{lem:AutRing} Let $f(t) \in \mathbb{F}_2[t]\setminus \{t,\, t^{-1}\}$ be an irreducible polynomial 
which is not self-reciprocal, and write $R_f = \F_2[t,t^{-1},f(t)^{-1}]$. Then $\anel{\Aut}(R_f)=\{\id\}$. 
\end{lem}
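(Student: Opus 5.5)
A ring automorphism $\alpha$ of $R_f$ must permute the units. So the plan is to pin down what $\alpha$ can do to the generators $t$ and $f$ of $U(R_f)$, which forces $\alpha(t)=t$ and hence $\alpha=\id$, since $R_f$ is generated over $\F_2$ by $t^{\pm1}$ and $f^{-1}$.

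First, the unit group. Since $R_f$ is the localisation of the UFD $\F_2[t]$ at the multiplicative set generated by $t$ and $f$,
\[U(R_f)=\{t^a f^b \mid a,b\in\Z\}\cong \Z^2,\]
because $\F_2^\times$ is trivial. The prime elements of $R_f$ up to units are the irreducible polynomials of $\F_2[t]$ other than $t$ and $f$.

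Second, the key rigidity is that $1+t$ is not a unit here. Note that $t+1\neq f$ because $f$ is not self-reciprocal while $t+1$ is, so $1+t$ is a genuine prime of $R_f$. The element $\alpha(t)$ is a unit $u=t^af^b$ with the property that $1+u$ is \emph{not} a unit, since $1+\alpha(t)=\alpha(1+t)$. I would also use the field extension: $\alpha$ extends to an automorphism of $\operatorname{Frac}(R_f)=\F_2(t)$, hence to a Möbius transformation $t\mapsto (at+b)/(ct+d)$ over $\F_2$. Such a transformation must preserve the set of places of $\F_2(t)$ that are ``removed'' in $R_f$, namely $\{0,\infty,f\}$, because these are exactly the places where elements of $R_f$ may have poles. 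Here $R_f$ is the ring of $S$-integers for $S=\{\infty, (t), (f)\}$.

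Third, the case analysis over the finite group $\PGL_2(\F_2)\cong\Sym(3)$, which acts on the rational places $\{0,1,\infty\}$ by permutations. If $\deg f\geq 2$, then $f$ is the unique non-rational place in $S$, so the Möbius map must preserve $\{0,\infty\}$ setwise. That leaves only $t\mapsto t$ and $t\mapsto 1/t$. The map $t\mapsto 1/t$ sends the place $f$ to the place of $f_r$, and $f_r\neq f$ by hypothesis, so it is excluded. If $\deg f=1$, then $f=t+1$, which is self-reciprocal, so this case is excluded by hypothesis. (The statement's exclusion of ``$t^{-1}$'' is presumably meant to rule out $f=t$.)

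The main obstacle is the justification that $\alpha$ preserves $S$. I would argue that the places of $\F_2(t)$ whose valuation rings contain $R_f$ are exactly the places outside $S$, i.e.\ the maximal ideals of $R_f$. An automorphism of $R_f$ permutes its maximal ideals and therefore permutes the complement $S$ as well. Since $\deg$ of the residue field is preserved, the unique non-rational place $(f)$ is fixed, which is what the case analysis above needs.
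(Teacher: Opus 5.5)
Your proof is correct, but it takes a genuinely different route from the paper's.

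\textbf{The paper's route.} It never leaves $R_f$. It writes $\varphi(t)=t^af(t)^b$ and $\varphi(f(t))=t^cf(t)^d$ with $\left(\begin{smallmatrix} a & b\\ c & d\end{smallmatrix}\right)\in\GL_2(\mathbb{Z})$. Substituting into $f$ gives the identity $t^cf(t)^d=f\bigl(t^af(t)^b\bigr)$. It then runs a sign case analysis on $(a,b)$. This uses only unique factorisation in $\mathbb{F}_2[t]$ (divisibility by $t$ and by $f$), degree comparisons and the determinant condition. The non-self-reciprocal hypothesis enters only in the single case $a=-1$, $b=0$.

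\textbf{Your route.} You pass to $\operatorname{Frac}(R_f)=\mathbb{F}_2(t)$ and use $\Aut(\mathbb{F}_2(t))\cong\mathrm{PGL}_2(\mathbb{F}_2)\cong\Sym(3)$. You then note that an automorphism preserving $R_f$ permutes the valuation rings containing $R_f$. Hence it permutes the complementary set $S=\{\infty,(t),(f)\}$, and it preserves residue degrees. So $(f)$ is fixed and $\{0,\infty\}$ is stabilised. Only $t\mapsto t$ and $t\mapsto t^{-1}$ survive, and the latter sends $(f)$ to $(f_r)\neq(f)$.

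\textbf{Comparison.} Your argument is shorter and needs only $\deg f\geq 2$. It also explains conceptually why non-self-reciprocity is exactly the right hypothesis: it is the obstruction to the involution $t\mapsto t^{-1}$ of $\mathbb{F}_2[t,t^{-1}]$ extending to $R_f$. It also adapts readily to other rings of $S$-integers in rational function fields. The paper's computation, by contrast, is fully elementary and self-contained. It avoids the function-field inputs you rely on: the M\"obius description of $\Aut(\mathbb{F}_2(t))$ and the correspondence between places and valuation rings containing $R_f$.

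\textbf{Minor points.} Your observation that $1+t$ is not a unit is never used and can be dropped. The excluded element ``$t^{-1}$'' in the statement is presumably a typo for $t-1=t+1$, not a second way of excluding $f=t$ (compare the paper's final section). As you note, $t+1$ is anyway ruled out by self-reciprocity.
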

\begin{proof}
Let $\varphi \in \anel{\Aut}(R_f)$. Let us show that $\varphi$ is the identity map. 

Since $\varphi$ sends units to units, there exist integers $a,b,c,d$ satisfying     $ad - bc = \pm 1$ and such that 
\begin{align*}
    \varphi(t) & = t^af(t)^b, \\
    \varphi(f(t)) & = t^cf(t)^d.
\end{align*}

    As $f(t) \in \F_2[t]$ is irreducible and not self-reciprocal, it is of the form
    \[f(t)=1+a_1t+\cdots + a_{n-1}t^{n-1}+t^n,\]
    for some $n\in \Z_{\geq 3}$ and some $a_1, \dots, a_{n-1} \in \F_2$ with 
    \[1+a_1t+ a_2t^2 + \cdots + a_{n-1}t^{n-1}+t^n \neq 1 + a_{n-1}t +\cdots +a_2 t^{n-2} + a_1 t^{n-1} + t^n .\]  
    We have
    \begin{align*}
        \varphi(f(t)) & = \varphi(1+a_1t+\cdots +t^n) \\
        & = 1+a_1t^af(t)^b +\cdots + t^{an}f(t)^{bn}.
    \end{align*}
    When comparing both values for $\varphi(f(t))$, we obtain 
    \begin{equation}\label{eq:comparison}
        t^cf(t)^d = 1+a_1t^af(t)^b +\cdots + t^{an}f(t)^{bn}.
    \end{equation}

In \cite[Proof of Theorem 5.1(ii)]{Bn1} this situation was also studied and it was proven that the matrix 
\[ M = \begin{pmatrix} a & b \\c & d
\end{pmatrix}\in \GL_2(\Z)\]
always has an eigenvalue one. Here we have to prove more as we need to show that $M$ is the identity matrix. 
Like in \cite{Bn1}, we will treat this problem via a case distinction and we will often use the following easy observation: assume that $\gamma,\delta \in \Z$ are such that 
\begin{equation}\label{product is polynomial}
t^\gamma f(t)^\delta = p(t), \mbox{ for some polynomial }p(t)\in \F_2[t],
\end{equation}
then $\gamma \geq 0$ and $\delta \geq 0$. 

\medskip

Indeed, suppose that $\gamma<0 $, then \eqref{product is polynomial} can be written as $f(t)^\delta = t^{-\gamma} p(t)$.
If $\delta> 0$, this would imply that the polynomial $t$ divides $f(t)$ which is a contradiction, if $\delta \leq 0 $, we find that 
$1 = t^{-\gamma} f(t)^{-\delta} p(t)$ which is again a contradiction. So we must have that $\gamma\geq 0$.\\
If $\gamma \geq 0$ and $\delta < 0$, we find that $t^\gamma = f(t)^{-\delta} p(t)$, implying that the polynomial $f(t)$ divides $t$, which is again a contradiction. As a conclusion, we have that $\gamma \geq 0$ and $\delta \geq 0$.
 \medskip

\noindent {\bf Case 1. $\mathbf{a>0}$}\\
\noindent {\bf Case 1.1 $ \mathbf{a>0,\; b\geq 0}$}\\
In this case, the right hand side of \eqref{eq:comparison} is a polynomial and the observation above implies that $c\geq 0$ and $d\geq 0$.
As $t$ does not divide $1+a_1t^af(t)^b +\cdots + t^{an}f(t)^{bn}$, $c=0$. From $\pm 1 = ad - bc=ad $, we find that $a=d=1$, as $a,d\geq 0$. Thus equation \eqref{eq:comparison} becomes $f(t) = 1+a_1tf(t)^b +\cdots + t^{n}f(t)^{bn}$. By comparing the degrees of the polynomials on both side of this equation, we see that $b=0$. So $a=d=1$ and $b=c =0$, which implies that $\varphi(t) =t$ (and $\varphi(f(t))=f(t)$) which in turn implies that $\varphi=\id$. 

\medskip

\noindent{\bf Case 1.2 $ \mathbf{a>0,\; b<  0}$}\\
Take $\beta = -b >0$. We can rewrite equation \eqref{eq:comparison} to 
\[ t^c f(t)^{d+n \beta} = f(t)^{n\beta} + a_1 t^a f(t)^{(n-1) \beta} + \cdots + a_{n-1} t^{a(n-1)}f(t)^\beta + t^{an}.\]
As the right hand side of this equation is a polynomial, it follows that $c \geq 0$ and $d+ n\beta \geq 0$. As $t$ does not divide the right hand side of the equation, we find that  $c=0$ and as $f(t)$ also does not divide the right hand side we must have that $d= - n \beta$. It follows that 
$\pm 1 = a d - bc = ad = - a \beta n$, which is a contradiction since $n \geq 3$. So this case is impossible. 

\bigskip

\noindent {\bf Case 2. $\mathbf{a<0}$}\\
\noindent {\bf Case 2.1 $ \mathbf{a<0,\; b> 0}$}\\
Let $\alpha = -a >0$. Now equation \eqref{eq:comparison} can be written as 
\[ t^{c+\alpha n} f(t)^d = t^{\alpha n} + a_1 t^{\alpha(n-1)}f(t)^b + \cdots + f(t)^{bn}.\]
We find that $c+ \alpha n\geq 0 $ and $d\geq 0$. As the right hand side of the equation is not divisible by $f(t)$, we have that $d=0$ and as the right hand side is also not divisible by $t$ it follows that $c = -\alpha n$. Hence $\pm 1 = ad -bc = b \alpha n$ which is again a contradiction, so we can also exclude this case.

\medskip

\noindent {\bf Case 2.2 $ \mathbf{a<0,\; b<  0}$}\\
With $\alpha =-a $ and $\beta =-b $, equation \eqref{eq:comparison} now becomes:
\[ t^{c+\alpha n} f(t)^{d+ n \beta} = t^{\alpha n}f(t)^{\beta n} + a_1 t^{\alpha(n-1)} f(t)^{\beta(n-1)}+ \cdots + 1,\]
from which we get that $c+\alpha n \geq 0$ and $d + \beta n\geq 0$. Moreover, the right hand side of the equation is neither divisible by $t$ nor by $f(t)$, from which we get that $c= -\alpha n = an $ and $d = -\beta n= bn$. Hence $\pm 1 = ad - bc = abn - b an =0 $, which shows that also this situation cannot occur.

\medskip

\noindent {\bf Case 2.3 $ \mathbf{a<0,\; b=  0}$}\\
In this case, we see that $\pm 1 = ad - bc = ad $, so $a = -1$ and $d=\pm 1$.
The equation \eqref{eq:comparison} reduces to 
\[ t^{c+n} f(t)^d = t^n + a_1 t^{n-1} + \cdots + a_{n-1} t + 1 = f_r(t)\]
This implies that $d\geq 0$ (so $d=1$) and $c+n\geq 0$. However, $f_r(t)$ is not divisible by $t$ and so $c+n=0$. It follows that 
$f(t) = f_r(t)$, but we have excluded this case because $f(t)$ is not self-reciprocal. 

\bigskip

\noindent {\bf Case 3. $\mathbf{a=0}$}\\
As $\pm 1 = ad - bc= bc$, there are two subcases to consider: $b=1$ and $b=-1$. \\
\noindent {\bf Case 3.1 $ \mathbf{a=0,\; b=1}$}\\
The equation \eqref{eq:comparison} now reads as 
\[ t^c f(t)^d = 1 + a_1f(t) + \cdots + f(t)^n.\]
We find that $c\geq 0$ and $d \geq 0$. As neither $f(t)$ nor $t$ divide the right hand side of the equation above, we see that $c=d=0$. However, this implies $\pm 1 = ad -bc = 0$ which is again a contradiction.

\medskip

\noindent {\bf Case 3.2 $ \mathbf{a=0,\; b=-1}$}\\
The equation \eqref{eq:comparison} now reads as 
\[ t^c f(t)^{d+n} = f(t)^n + a_1f(t)^{n-1} + \cdots + 1.\]
Now we find that $c\geq 0$ and $d+n \geq0$. As neither $f(t)$ nor $t$ divide the right hand side of the equation, we must have $c=d=0$. Again this implies $\pm 1 = ad -bc = 0$ which allows us to also exclude this last situation. 
\end{proof}

We shall need some additional technical observations. 

\begin{lem}\label{lemma-auto}
Let $R$ be an integral domain such that any element of $R$ can be written as a sum of units.
Assume that $\varphi$ is an automorphism of the additive group of $R$ and $\Phi\in \Aut(U(R))$ is an automorphism of the multiplicative group of $R$. If  
\[\Phi(u)\varphi(1) = \varphi(u)\mbox{  for all $u \in U(R)$},\]
then $\Phi$ can be extended to a ring automorphism $\overline{\Phi}$ of $R$.
\end{lem}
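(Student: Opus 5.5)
The natural candidate for the extension is the map $\overline{\Phi} \colon R \to R$ given by
\[\overline{\Phi}(r) := \varphi(1)^{-1}\varphi(r).\]
For this to make sense we first check that $\varphi(1)$ is a unit. Take $u=1$ in the hypothesis: since $\Phi$ is a group automorphism, $\Phi(1)=1$, and the relation $\Phi(1)\varphi(1)=\varphi(1)$ gives nothing new. Instead we argue as follows. By assumption $1=\sum_k v_k$ with $v_k \in U(R)$, so $\varphi(1)$ lies in $R$; we need its invertibility. Surjectivity of $\varphi$ gives some $r=\sum_k w_k$ (with $w_k$ units) such that $\varphi(r)=1$. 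Then
\[1=\sum_k \varphi(w_k)=\Bigl(\sum_k \Phi(w_k)\Bigr)\varphi(1),\]
so $\varphi(1)\in U(R)$. Hence $\overline{\Phi}$ is well defined, and on units it satisfies $\overline{\Phi}(u)=\Phi(u)$, so it extends $\Phi$.

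We then verify that $\overline{\Phi}$ is a ring automorphism.
\begin{itemize}
\item It is additive, because $\varphi$ is additive.
\item It is bijective, being $\varphi$ composed with multiplication by the unit $\varphi(1)^{-1}$.
\item It sends $1$ to $1$ by construction.
\end{itemize}

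The only substantial point is multiplicativity. Write arbitrary elements as $r=\sum_i u_i$ and $s=\sum_j v_j$ with $u_i,v_j\in U(R)$. By additivity,
\[\overline{\Phi}(rs)=\sum_{i,j}\overline{\Phi}(u_iv_j)=\sum_{i,j}\Phi(u_iv_j)=\sum_{i,j}\Phi(u_i)\Phi(v_j),\]
using that $\Phi$ is multiplicative on $U(R)$. The right-hand side factors as
\[\Bigl(\sum_i \overline{\Phi}(u_i)\Bigr)\Bigl(\sum_j \overline{\Phi}(v_j)\Bigr)=\overline{\Phi}(r)\,\overline{\Phi}(s).\]
This identity does not depend on the chosen decompositions into units, since $\overline{\Phi}$ is defined globally via $\varphi$. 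Therefore $\overline{\Phi}$ is a ring automorphism of $R$ restricting to $\Phi$ on $U(R)$.

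The main obstacle is exactly the unit check for $\varphi(1)$ handled above. The multiplicativity computation is routine once the sum-of-units hypothesis is used, and the integral-domain hypothesis is not even essential beyond working in a commutative ring.
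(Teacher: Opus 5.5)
Your proof is correct. It uses the same ingredients as the paper but in a different order, and that order makes the argument slightly cleaner and more general.

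The paper defines $\overline{\Phi}$ by the formula $\overline{\Phi}(\sum_i u_i)=\sum_i \Phi(u_i)$, so it must first prove this is well defined. It does so by multiplying the difference of two such expressions by $\varphi(1)$, obtaining $\varphi(r-r)=0$, and cancelling $\varphi(1)\neq 0$. This cancellation is where the integral-domain hypothesis enters. Only at the end does the paper observe that $\varphi(r)=\overline{\Phi}(r)\varphi(1)$, deduce from surjectivity of $\varphi$ that $\varphi(1)$ is a unit, and conclude that $\overline{\Phi}=\varphi(1)^{-1}\varphi$ is bijective.

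You do the unit check first. As you set it up, it needs only one chosen decomposition of a preimage of $1$, and no well-defined $\overline{\Phi}$. You then take $\varphi(1)^{-1}\varphi$ as the definition, so well-definedness, additivity and bijectivity come for free. What remains is the distributive computation for multiplicativity, which is the same as the paper's.

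The payoff is what you point out: your argument works in any commutative ring in which every element is a sum of units, whereas the paper's ordering genuinely uses the absence of zero divisors. The paper's route has the minor merit of making the explicit formula $\sum_i u_i\mapsto\sum_i\Phi(u_i)$ the definition, though both produce the same map.

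Your sentence about writing $1$ as a sum of units is superfluous but harmless.
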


\begin{proof}
Let $r\in R$. By assumption, we can write $r$ as a finite sum of units:
\[r = \sum_{i =1}^n u_i.\]

We define 
\[ \overline{\Phi}(r) = \sum_{i=1}^n \Phi(u_i). \]
We first have to show that $\overline{\Phi}$ is well defined.
So assume that 
\[r = \sum_{i=1}^n u_i  =  \sum_{j=1}^m v_j,\]
where the $u_i$ and $v_j$ are units in $R$. We have to show that 
\[ \sum_{i=1}^n \Phi(u_i)  =  \sum_{j=1}^m  \Phi(v_j).\]
We have that 
\begin{eqnarray*}
\left(\sum_{i=1}^n \Phi(u_i) - \sum_{j=1}^m\Phi(v_j)\right)\varphi(1) & = & 
\sum_{i=1}^n \Phi(u_i)\varphi(1) - \sum_{j=1}^m \Phi(v_j)\varphi(1)\\
& = & \sum_{i=1}^n \varphi(u_i) - \sum_{j=1}^m\varphi(v_j)\\
& = & \varphi\left(\sum_{i=1}^n u_i - \sum_{j=1}^m v_j \right)\\
& = & \varphi( r-r)=0.
\end{eqnarray*}
As $\varphi(1)\neq 0$ (since $\varphi$ is an automorphism of the additive group of $R$), it must hold $\sum_{i=1}^n \Phi(u_i) - \sum_{j=1}^m \Phi(v_j)=0$, showing that $\overline{\Phi}$ is well defined.
It is obvious that $\overline{\Phi}\bigm|_{U(R)}= \Phi$.
By definition, it is clear that $\overline{\Phi}$ is a morphism of the additive group of $R$.
To show that it is also multiplicative, consider any two elements $r_1, r_2 \in R$ expressed as finite sums of units
\[r_1 =  \sum_{i=1}^n u_i \quad \text{ and } \quad r_2  = \sum_{j=1}^m v_j.\]
Then $r_1r_2= \displaystyle \sum_{i=1}^n\sum_{j=1}^m u_i v_j$ and we see that 

\begin{align*}
    \overline{\Phi}(r_1r_2) & = \sum_{i=1}^n\sum_{j=1}^m \Phi(u_i v_j)\\
    & = \left(\sum_{i=1}^n \Phi(u_i)\right) \left( \sum_{j=1}^m \Phi(v_j)\right)\\
    & = \overline{\Phi}(r_1)\overline{\Phi}(r_2).
\end{align*}
Note that for a general element $r = \displaystyle\sum_{i=1}^n u_i$, we have 
\[ \varphi(r) = \sum_{i=1}^n \varphi(u_i)=
\sum_{i=1}^n \Phi(u_i)\varphi(1)= \overline{\Phi}(r) \varphi(1).\]
As $\varphi$ is an automorphism of the additive group of $R$, there exists a $z\in R$ with 
$\varphi(z)=1$, and from the above, we then see that $1 = \overline{\Phi}(z) \varphi(1)$, showing that $\varphi(1)$ is a unit. Again by the above, we find that 
\[ \forall r \in R:\; \overline{\Phi}(r) = \varphi(r) \varphi^{-1} (1)\]
from which we deduce that $\overline{\Phi}$ is an automorphism of the ring $R$. 
\end{proof}
Recall our choice of domain $R=R_f = \F_2[t,t^{-1},f(t)^{-1}]$ with $f(t) \neq \{t, \, t^{-1}\}$ irreducible and not self-reciprocal. 

\begin{pps}\label{pps:conditionsonPhi}
Assume that $\varphi$ is an automorphism of the additive group of $R_f$ and $\Phi\in \Aut(U(R_f))$ is an automorphism of the multiplicative group of $R_f$. Then:
\begin{itemize}
\item If  
$\Phi(u)\varphi(1) = \varphi(u)\mbox{  for all $u \in U(R_f)$},$
then $\Phi(u)=u$ for all $u\in U(R_f)$.
\item If $\Phi(u)\varphi(1) = \varphi(u^{-1})\mbox{  for all $u \in U(R_f)$},$
then $\Phi(u)=u^{-1}$ for all $u\in U(R_f)$.
\end{itemize}
\end{pps}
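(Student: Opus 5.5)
The plan is to reduce both items to \Cref{lemma-auto} and \Cref{lem:AutRing}. First we check the hypothesis of \Cref{lemma-auto} for $R_f=\F_2[t,t^{-1},f(t)^{-1}]$: every element has the form $p(t)\,t^{-a}f(t)^{-b}$ with $p\in\F_2[t]$. Writing $p$ as a sum of monomials $t^k$, each summand $t^{k-a}f(t)^{-b}$ is a unit. Hence every element of $R_f$ is a sum of units.

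\emph{First item.} Under the assumption $\Phi(u)\varphi(1)=\varphi(u)$, \Cref{lemma-auto} extends $\Phi$ to a ring automorphism $\overline{\Phi}$ of $R_f$. By \Cref{lem:AutRing}, $\anel{\Aut}(R_f)=\{\id\}$, so $\overline{\Phi}=\id$. In particular $\Phi(u)=u$ for all $u\in U(R_f)$.

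\emph{Second item.} Here the plan is to replace $\Phi$ by $\Phi'(u):=\Phi(u^{-1})$, which is again an automorphism of the abelian group $U(R_f)$, and then try to apply \Cref{lemma-auto} to the pair $(\varphi,\Phi')$. The assumption rewritten for $\Phi'$ gives
\[\Phi'(u)\varphi(1)=\Phi(u^{-1})\varphi(1)=\varphi(u),\]
which is exactly the hypothesis of \Cref{lemma-auto}. It then follows that $\Phi'$ extends to a ring automorphism, hence $\Phi'=\id$ by \Cref{lem:AutRing}, and therefore $\Phi(u)=\Phi'(u^{-1})=u^{-1}$.

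However, the substitution $u\mapsto u^{-1}$ has to be checked carefully. If instead the natural reformulation is $\Phi''(u):=\Phi(u)^{-1}$, the hypothesis does not transfer directly, because $\varphi(1)$ does not invert. So I would first settle, with the exact form of the hypothesis, which twisted map satisfies the relation of \Cref{lemma-auto}; with the statement as written, $u\mapsto\Phi(u^{-1})$ works.

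The only real content is therefore the sum-of-units property together with \Cref{lem:AutRing}. The main potential obstacle is the bookkeeping in the second item. For instance, after setting $u=1$ one gets $\Phi(1)\varphi(1)=\varphi(1)$, which is consistent, so no hidden constraint on $\varphi(1)$ arises that would obstruct the argument.
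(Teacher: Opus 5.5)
Your proposal is correct and follows the paper's proof essentially verbatim: you check that every element of $R_f$ is a sum of units, apply \Cref{lemma-auto} and then \Cref{lem:AutRing} for the first item, and for the second item pass to $u\mapsto\Phi(u^{-1})$, which satisfies the hypothesis of the first item. Your worry about the alternative $u\mapsto\Phi(u)^{-1}$ is moot: $\Phi$ is a homomorphism, so $\Phi(u)^{-1}=\Phi(u^{-1})$ and the two maps coincide.
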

\begin{proof}
Any element $r\in R_f$ is of the form 
\[r = \sum_{k,\ell \in \Z}a_{k,\ell} t^k f(t)^\ell =  \sum_{\ell \in L} u_\ell\]
where $L$ is a finite subset of $\Z$ and the $u_\ell$ are units in $R_f$. Hence $R_f$ satisfies the conditions of \Cref{lemma-auto}. When the first condition is satisfied, then we deduce from 
Lemma~\ref{lemma-auto} that $\Phi$ can be extended to a ring automorphism $\overline{\Phi}$ of $R_f$.
From Lemma~\ref{lem:AutRing}, we get that $\overline{\Phi}$ is the identity map on $R_f$ and so a fortiori $\Phi(u)=u$ for all $u\in U(R_f)$.

When the second condition, i.e.\ $\Phi(u)\varphi(1) = \varphi(u^{-1})$ for all $u\in U(R_f)$, is satisfied, we consider the map $\Psi: U(R_f) \to U(R_f): u \mapsto \Phi(u^{-1})$. Since $U(R_f)$ is an abelian group, $\Psi \in \Aut(U(R_f))$. The map $\Psi$ satisfies:
\[ \Psi(u) \varphi(1) = \Phi(u^{-1}) \varphi(1) = \varphi(u),\mbox{  for all $u \in U(R_f)$}, \]
and hence from the first case we deduce that $\Psi(u)=u$, and so $\Phi(u)=u^{-1}$, for all $u\in U(R_f)$.

\end{proof}

\begin{thm}\label{thm:AfR}
The group $\overline{\overline{\mbA_4(R_f)}}$ has property $R_\infty$, where $R_f$ is as in Lemma~\ref{lem:AutRing}.
\end{thm}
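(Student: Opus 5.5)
The plan is to reduce to the metabelian quotient $\overline{\mbA_4(R_f)}$ and apply Jabara's trick there, with \Cref{e23-is-fixed} and \Cref{pps:conditionsonPhi} pinning down how any automorphism acts on the $(2,3)$-entry.

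\emph{Set-up.} Write $G=\overline{\overline{\mbA_4(R_f)}}$ and $N=\mbU_4'(R_f)/Z(\mbU_4(R_f))$. As noted in the proof of \Cref{e23-is-fixed}, $N$ is characteristic in $G$ and $G/N\cong\overline{\mbA_4(R_f)}$. The group $\mbA_4(R_f)$ is finitely generated by \Cref{pps:SIfinitelygenerated}(ii), because:
\begin{itemize}
\item $U(R_f)=\{t^kf(t)^\ell\}$ is finitely generated;
\item every element of $R_f$ is a sum of units;
\item $I=\{2,3\}$ satisfies the (NG)-condition.
\end{itemize}
Hence $\overline{\mbA_4(R_f)}$ is finitely generated, and it is residually finite by \Cref{lem:metabelianresiduallyfinite}. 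So for $\varphi\in\Aut(G)$, by \Cref{lem:quotient} and \Cref{lem:Jabara} it suffices to find an inner automorphism $\iota$ such that the automorphism induced by $\iota\circ\varphi$ on $\overline{\mbA_4(R_f)}$ fixes infinitely many elements. Replacing $\varphi$ by $\iota\circ\varphi$ does not change $R(\varphi)$, by \Cref{lem:ignoreinner}. The candidates are the elements $\bekl{2,3}(r)$ for $r\in R_f$.

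\emph{Step 1: reading off the diagonal action.} By \Cref{e23-is-fixed}, modulo $N$ we have $\varphi(\bekl{2,3}(r))=\bekl{2,3}(\Phi_{2,3}(r))$ for some additive automorphism $\Phi_{2,3}$. Since $\overline{\mbU_4(R_f)}$ is characteristic (\Cref{lem:char}), $\varphi$ induces an automorphism of $G/\overline{\mbU_4}\cong\mathcal D_2\times\mathcal D_3$. Write the diagonal part of $\varphi(\bd2(u))$ as $\bd2(\Phi_2(u))\bd3(\Phi_3(u))$, and set $\Theta(u)=\Phi_2(u)\Phi_3(u)^{-1}$. This $\Theta$ is a homomorphism $U(R_f)\to U(R_f)$. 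Apply $\varphi$ to the relation $\mu(\bd2(u),\bekl{2,3}(r))=\bekl{2,3}(ur)$, using \eqref{conjdiag} and that the unipotent part of $\varphi(\bd2(u))$ commutes with $\bekl{2,3}$ in the abelian group $\overline{\mbU_4}$. This gives
\[\Phi_{2,3}(ur)=\Theta(u)\,\Phi_{2,3}(r)\quad\text{for all } u\in U(R_f),\ r\in R_f.\]
With $r=1$ this is exactly the hypothesis $\Theta(u)\Phi_{2,3}(1)=\Phi_{2,3}(u)$ of \Cref{pps:conditionsonPhi}.

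\emph{Step 2 (expected main obstacle): $\Theta$ is an automorphism.} \Cref{pps:conditionsonPhi} is stated for $\Phi\in\Aut(U(R_f))$, so I must check that $\Theta$ is bijective.
\begin{itemize}
\item Injectivity: if $\Theta(u)=1$, then $\Phi_{2,3}(u)=\Phi_{2,3}(1)$, so $u=1$.
\item Surjectivity, first attempt: the proof of \Cref{lemma-auto} uses only that $\Theta$ is a homomorphism. It yields a ring endomorphism $\overline\Theta$ with $\overline\Theta(r)=\Phi_{2,3}(r)\Phi_{2,3}(1)^{-1}$, which is bijective because $\Phi_{2,3}$ is. So $\overline\Theta\in\anel{\Aut}(R_f)=\{\id\}$ by \Cref{lem:AutRing}.
\item Surjectivity, fallback: run the same argument for $\varphi^{-1}$ and compare.
\end{itemize}
Either route gives $\Theta=\id$, and therefore $\Phi_{2,3}(r)=c\,r$ with $c=\Phi_{2,3}(1)\in U(R_f)$.

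\emph{Step 3: conclusion.} Compose $\varphi$ with the inner automorphism $\iota_{\cd2(c^{-1})}$. This is legitimate since $2\in I$, so $\cd2(c^{-1})\in G$. By \eqref{conjdiag}, the resulting automorphism induces on $\overline{\mbA_4(R_f)}$ a map fixing every $\bekl{2,3}(r)$, $r\in R_f$. These are infinitely many fixed points, so Jabara's trick gives infinite Reidemeister number on $\overline{\mbA_4(R_f)}$. \Cref{lem:quotient} then lifts this to $G$, and \Cref{lem:ignoreinner} transfers it back to $\varphi$.
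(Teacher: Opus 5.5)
Your proof is correct, but it takes a different route from the paper's.

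\textbf{The paper's route.} The paper also passes to $\overline{\mbA_4(R_f)}$ and uses \Cref{e23-is-fixed} to get $\sigma(2,3)=(2,3)$. It then splits into two cases: $\sigma=\id$, or $\sigma$ the transposition swapping $(1,2)$ and $(3,4)$. In each case it uses all three centraliser relations to compute the four diagonal homomorphisms $\Phi_2,\Phi_3,\Psi_2,\Psi_3$. Via the first or second item of \Cref{pps:conditionsonPhi}, it concludes that the automorphism induced on the torus quotient $\mbA_4(R_f)/\mbU_4(R_f)\cong U(R_f)\times U(R_f)$ is either the identity or $(u,v)\mapsto(v^{-1},u^{-1})$. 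So $R_\infty$ is detected on a finitely generated abelian quotient.

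\textbf{Your route.} You use only the $(2,3)$ root group and the single character $\Theta=\Phi_2\Phi_3^{-1}$, which handles both values of $\sigma$ at once. In the paper's Case~2 one has $\Theta(u)=\Phi_3(u^{-1})$, and the relation $\Phi_3(u)\Phi_{2,3}(1)=\Phi_{2,3}(u^{-1})$ there is exactly your identity $\Theta(u)\Phi_{2,3}(1)=\Phi_{2,3}(u)$. You are right that the proof of \Cref{lemma-auto} never uses bijectivity of $\Phi$, only that it is a homomorphism of $U(R)$. That observation gives both $\Theta=\id$ and $\Phi_{2,3}(r)=cr$ with $c=\Phi_{2,3}(1)\in U(R_f)$, so your fallback via $\varphi^{-1}$ is unnecessary.

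\textbf{Trade-offs.} You skip the case analysis, the second item of \Cref{pps:conditionsonPhi}, and the separate check that $\Phi_2=\Psi_3$ (resp.\ $\Phi_3=\Psi_2$) is an automorphism. The price is that Jabara's trick must be applied to the metabelian group $\overline{\mbA_4(R_f)}$. That requires its finite generation and P.~Hall's residual finiteness (\Cref{lem:metabelianresiduallyfinite}), whereas the paper only needs fixed points in $U(R_f)^2$. The paper's route also gives a sharper structural fact: every automorphism acts on the torus quotient as one of two explicit maps, so property $R_\infty$ already holds for the induced map there. Your argument does not record this.
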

\begin{proof}

Fix $\varphi \in \Aut(\overline{\overline{\mbA_4(R_f)}})$, let us show that $R(\varphi)=\infty$.

Let $\overline\varphi$ be the induced automorphism on $\overline{\mbA_4(R_f)}$. It is enough to show that
$R(\overline\varphi)=\infty.$

Lemma~\ref{lem:fundamental} ensures that there exist $\sigma \in \Sym(\{(1,2),(2,3),(3,4)\})$ and group automorphisms 
\[\Phi_{1,2}, \, \Phi_{2,3}, \, \Phi_{3,4}:(R_f,+) \to (R_f,+)\] such that
\[\overline{\varphi}(\bekl{i,j}(r))=\bekl{\sigma(i,j)}(\Phi_{i,j}(r)),\]
for all $(i,j) \in \{(1,2),(2,3),(3,4)\}$ and all $r\in R_f$.\\
Moreover, Proposition~\ref{e23-is-fixed} implies that $\sigma(2,3)= (2,3)$.
Hence there are only two possibilities for $\sigma$, either $\sigma$ is the identity or $\sigma$ is the transposition which swaps $(1,2)$ and $(3,4)$. We will treat these two cases separately.

Let $\Phi_2, \Phi_3, \Psi_2, \Psi_3 \colon U(R_f) \to U(R_f) $ be group homomorphisms such that for all $u \in U(R_f)$
\begin{align*}
    \overline{\varphi}(\bd{2}(u)) & = \bd{2}(\Phi_2(u)) \, \bd{3}(\Phi_3(u))E(u),\\
    \overline{\varphi}(\bd{3}(u)) & = \bd{2}(\Psi_2(u)) \, \bd{3}(\Psi_3(u))F(u),
\end{align*}
where $E(u), F(u) \in \overline{\mbU_4(R)} = \mbU_4(R)/\mbU'_4(R)$ are the unipotent parts of $\overline{\varphi}(\bd{2}(u))$ and $\overline{\varphi}(\bd{3}(u))$. 

\medskip

We have the following relations in $\overline{\mbA_4(R_f)}$ for all $r\in R_f$ and all $u\in U(R_f)$:
\begin{eqnarray*}
     \mu(\bd{2}(u) , \bekl{3,4}(r))  &=& \bekl{3,4}(r)\\
 \mu(\bd{2}(u) \bd{3}(u) , \bekl{2,3}(r) ) & =& \bekl{2,3}(r)\\
 \mu (\bd{3}(u) ,  \bekl{1,2}(r) ) & =&  \bekl{1,2}(r) 
\end{eqnarray*}
Applying $\overline{\varphi}$ to the above relations, leads to

\begin{equation}\label{eqn1} 
\mu(\bd{2}(\Phi_2(u))\bd{3}(\Phi_3(u)),  \bekl{\sigma(3,4)}(\Phi_{3,4}(r)) )
 =\bekl{\sigma(3,4)}(\Phi_{3,4}(r))\end{equation}
\begin{equation}\label{eqn2}\mu ( \bd{2}(\Phi_2(u)\Psi_2(u)) \bd{3}(\Phi_3(u)\Psi_3(u)),  \bekl{2,3}(\Phi_{2,3}(r)))
 = \bekl{2,3}(\Phi_{2,3}(r))\end{equation}
\begin{equation}\label{eqn3} \mu (\bd{2}(\Psi_2(u))\bd{3}(\Psi_3(u)),   \bekl{\sigma(1,2)}(\Phi_{1,2}(r)))   = 
\bekl{\sigma(1,2)}(\Phi_{1,2}(r))\end{equation}

\medskip

\noindent \textbf{Case 1: $\sigma$ is the identity}\\
In this case equation~\eqref{eqn1} becomes
\[ \bekl{3,4}(\Phi_3(u) \Phi_{3,4}(r)) =\bekl{3,4}(\Phi_{3,4}(r))\]
from which we get that  $\Phi_3(u)=1$ for all $u\in U(R_f)$.
Analogously, equation~\eqref{eqn3} leads to $ \Psi_2(u) = 1$ for all $u\in U(R_f)$.
Using that $\Phi_3(u)=\Psi_2(u)=1$, equation~\eqref{eqn2} leads to 
\[ \bekl{2,3} ( \Phi_2(u) \Psi_3(u)^{-1} \Phi_{2,3}(r))  = \bekl{2,3}(\Phi_{2,3}(r)), \]
resulting in $\Phi_2(u) = \Psi_3(u)$ for all $u\in U(R_f)$. As $\overline\varphi$ has to induce an automorphism on $\mbA_4(R_f)/\mbU_4(R_f)\cong U(R_f) \times U(R_f)$, $\Phi_2=\Psi_3$ has to be an automorphism of $U(R_f)$.

Now, we apply $\overline{\varphi}$ to both sides of the equation $\mu(\bd{2}(u),  \bekl{2,3}(1)) = \bekl{2,3}(u)$ to get
\[\bekl{2,3}(\Phi_2(u)\Phi_{2,3}(1)) =\bekl{2,3}(\Phi_{2,3}(u))  \]
which yields
\[ \Phi_2(u)\Phi_{2,3}(1)= \Phi_{2,3}(u).\]
From \Cref{pps:conditionsonPhi}, we find that $\Phi_2(u)=\Psi_3(u)=u$ for all $u\in U(R_f)$.
This implies that the induced automorphism on $\mbA_4(R_f)/\mbU_4(R_f)\cong U(R_f) \times U(R_f)$ is the identity map, which has infinite Reidemeister number (all points of $U(R_f) \times U(R_f)$ are fixed) and so also $R(\overline{\varphi})=\infty$, which in turn shows that 
$R(\varphi) =\infty$.

\medskip

\noindent \textbf{Case 2: $\sigma$ swaps $(1,2)$ and $(3,4)$}\\
In this case equations~\eqref{eqn1} and \eqref{eqn3} lead to $\Phi_2(u) = \Psi_3(u)=1$ and using this,
equation~\eqref{eqn2} gives that $\Phi_3(u) = \Psi_2(u)$. The fact that $\overline\varphi$ has to induce an automorphism on $\mbA_4(R_f)/\mbU_4(R_f)\cong U(R_f) \times U(R_f)$ now shows that  $\Phi_3=\Psi_2$ has to be an automorphism of $U(R_f)$. Again we apply $\overline{\varphi}$ to the equation $\mu(\bd{2}(u),  \bekl{2,3}(1))= \bekl{2,3}(u)$, which now results in 
\[\bekl{2,3}(\Phi_3(u^{-1})\Phi_{2,3}(1))=\bekl{2,3}(\Phi_{2,3}(u))  \]
and we find that 
\[ \Phi_3(u)\Phi_{2,3}(1)= \Phi_{2,3}(u^{-1}) \mbox{ for all }u \in U(R_f).\]
This time, \Cref{pps:conditionsonPhi}, shows that $\Phi_3(u)=\Psi_2(u) =u^{-1}$.
Using the isomorphism $\mbA_4(R_f)/\mbU_4(R_f)\cong U(R_f) \times U(R_f)$, we find that 
$\overline{\varphi}$ induces the automorphism 
\[ \overline{\overline{\varphi}}: U(R_f) \times U(R_f) \to U(R_f) \times U(R_f): (u,v) \mapsto (v^{-1},u^{-1}).\]
All elements of the form $(u,u^{-1})$ are fixed points of $\overline{\overline{\varphi}}$ and so $
R(\overline{\overline{\varphi}})=\infty$, from which we again obtain that $R(\varphi)=\infty$.
\end{proof}

\subsection{Quasi-isometric diversity in quotients of Abels' groups}\label{sec:qidAn}

Here we prove \Cref{diversemainthm}\eqref{diversemainthm2}, for which we need a crucial observation due to Minasyan--Osin--Witzel, restated below in the form we need.

\begin{thm}[{\cite[Proof of Corollary~5.3]{MinasyanOsinWitzel}}] \label{thm:MOW}
Let $G$ be a finitely presented group. Suppose its centre $Z(G)$ contains a subgroup of the form $E = \bigoplus_{j \in \N} E_j$, where each $E_j$ is a non-trivial (abelian) subgroup. Given $J \subseteq \N$, set $E_J = \gera{E_j \mid j \in J} \leq Z(G)$. Then the set 
\[\mc{S} = \{ G/E_J \mid J \subseteq \N \}\]
of central quotients of $G$ contains uncountably many pairwise non-quasi-isometric groups.
\end{thm}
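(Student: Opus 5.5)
The plan is a cardinality count: there are continuum many subsets $J \subseteq \N$, but I will show that only countably many of them can give quotients in any one quasi-isometry class.

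First I will check that $J \mapsto E_J$ is injective. Because $E=\bigoplus_{j} E_j$ is a direct sum with every $E_j \neq 1$, we have $E_J \cap E_{j_0} = 1$ whenever $j_0 \notin J$. So different $J$ give different central subgroups, and the family $\mc{S}$ is indexed by continuum many distinct normal subgroups of $G$.

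Next I will show that each isomorphism class inside $\mc{S}$ contains only countably many of the groups $G/E_J$. Fix a finite generating set $x_1,\ldots,x_m$ of $G$. An isomorphism $G/E_J \to G/E_K$ is determined by the images of the $x_i$, and there are only countably many choices for these images. Once the images are fixed, the kernel of the composite $G \onto G/E_J \to G/E_K$ is determined, and this kernel is $E_J$. Hence the set $\{J \mid G/E_J \cong G/E_K\}$ is countable for each $K$.

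The main step, and the one I expect to be the real obstacle, is a coarse version of this: each quasi-isometry class meets $\mc{S}$ in only countably many isomorphism classes. Here I would use finite presentability of $G$ through Rips complexes. Let $r_0$ be the maximal length of a relator in a finite presentation of $G$.
\begin{itemize}
\item[(a)] For $H = G/E_J$ and every $r \geq r_0$, I claim $\pi_1(P_r(H))$ is a quotient of $E_J \cap B_G(r)$-generated central data; concretely, it is generated by the images of those central elements of $E_J$ whose word length in $G$ is at most about $r$. This should hold because every loop of $H$ lifts to a path in $G$ ending at an element of $E_J$, and the relators of $G$ are already filled at scale $r_0$.
\item[(b)] A quasi-isometry $H \to H'$ induces, for $r$ large compared to $r'$, maps $P_r(H) \to P_{r'}(H')$ and back. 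Their composites agree with the inclusions up to bounded homotopy.
\item[(c)] From (a) and (b), the normal subgroup $E_J$, viewed as a subset of $G$, should be recoverable from $H'$ together with countably many additional choices: a scale $r$, a finite amount of data describing the quasi-isometry on a ball, and a marking of generators.
\end{itemize}
If (c) holds, then each quasi-isometry class contains only countably many of the groups $G/E_J$, and combining this with the first two steps gives uncountably many classes.

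The delicate point is (c). Passing to larger and larger scales, $\pi_1(P_r(H))$ only detects the elements of $E_J$ up to length $r$, so it is not clear that finitely many choices pin down all of $E_J$. Centrality is what I would use here: it should make the relevant fundamental groups abelian and compatible across scales, so that the direct-limit behaviour is controlled. If this approach stalls, I would fall back on the Minasyan--Osin--Witzel criterion as they prove it, namely that the quasi-isometry class of a finitely generated group contains only countably many central quotients of a fixed finitely presented group. That criterion is exactly (c).
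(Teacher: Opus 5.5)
Your first two steps are correct: $J\mapsto E_J$ is injective, and each isomorphism class inside $\mathcal{S}$ is countable. The gap is step (c), that a single quasi-isometry class meets $\mathcal{S}$ in only countably many groups; the whole argument rests on it and you give no proof.

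The tools you list cannot supply one, and (a) is backwards. For $r$ large compared with $r_0$, the group presented by all relations of $G/E_J$ of length at most $3r$ is $G/\langle E_J\cap B_G(3r)\rangle$. So, up to constants, $\pi_1(P_r(G/E_J))\cong E_J/\langle E_J\cap B_G(3r)\rangle$: the short central elements are killed, not generators. Consequently the Rips data at any fixed scale, together with a quasi-isometry restricted to a finite ball, say nothing about the elements of $E_J$ longer than that scale. Those are exactly what distinguish different $J$, so nothing in (a)--(c) pins $J$ down up to countable ambiguity.

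Your fallback also misattributes. Minasyan--Osin--Witzel prove no statement that a quasi-isometry class contains only countably many central quotients of $G$. The theorem does not need one: uncountably many classes is compatible with some class meeting $\mathcal{S}$ uncountably often.

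The missing idea is to replace counting by Baire category in the space of marked groups.
\begin{enumerate}
\item Mark $G/E_J$ by the image of a fixed finite generating set of $G$. Balls in $G$ are finite and elements of $E$ have finite support, so $J\mapsto G/E_J$ is continuous on the Cantor space $2^{\N}$. By your injectivity step it is a homeomorphism onto $\mathcal{S}$, so $\mathcal{S}$ is compact and perfect.
\item For fixed constants, the set of pairs admitting an $(L,C)$-quasi-isometry is closed, by a compactness argument. Hence quasi-isometry is an $F_\sigma$ relation.
\item Finite presentability shows that no cylinder $\{J : J\cap[1,m]=J_0\}$ lies in a single class. Suppose one did. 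Baire gives a smaller cylinder, $J\cap[1,m']=J_1$, all of whose members are quasi-isometric with uniform constants to $P=G/E_{J_1}$. This $P$ is finitely presented when the $E_j$ are finitely generated, as in the application where $E_j\cong\F_2$. Coarse simple connectivity of Rips complexes is controlled by the constants, so every group in the cylinder is presented by relators of length at most some fixed $R'$.
\item For $k$ large, $G/E_{J_1\cup\{k\}}$ lies in this cylinder and has the same relations of length at most $R'$ as $P$. Hence $E_{J_1\cup\{k\}}=E_{J_1}$, contradicting injectivity.
\end{enumerate}
So each closed piece of the relation is nowhere dense in $\mathcal{S}\times\mathcal{S}$ and the relation is meager. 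Mycielski's theorem then yields a Cantor set of pairwise non-quasi-isometric groups $G/E_J$, without ever needing individual classes to be countable.
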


\begin{proofof}{\Cref{diversemainthm}\eqref{diversemainthm2}}
The goal is to apply \Cref{thm:MOW} to a carefully chosen group $G$ for which all of its central quotients $G/E_J$ have property~$R_\infty$. 

Let us first produce $G$ and argue why it satisfies the hypotheses of \Cref{thm:MOW}. Take Abels' group 
\[G = \mb{S}_4^{\{2,3\}}(R_f) = \mbA_4(R_f) \leq \GL_4(\F_2(t)),\] 
where 
\[R_f = \F_2[t,t^{-1},f(t)^{-1}] \subset \F_2(t)\]
is chosen as back in \Cref{sec:notpenultimate} with $f(t) \in \F_2[t]$ an irreducible polynomial which is not self-reciprocal. Recall that the centre of $G$ is the `upper corner subgroup'
\[Z(G) = Z(\mbA_4(R_f)) = \gera{e_{1,4}(r) \mid r \in R_f} \cong (R_f,+),\]
which can be easily checked using the relations \eqref{rel:commutatorsU} and \eqref{rel:commutators} seen in \Cref{sec:allrelations}. 
Thus $Z(G)$ contains the infinite-dimensional vector space $E = \F_2[t] \cong \bigoplus_{j \in \N} \F_2$ (we are choosing the canonical basis $\{1,t,t^2, \ldots\}$), and we may form the central quotients $G/E_J$ as in \Cref{thm:MOW}.

To see why $G$ is finitely presented, note that $R_f$ is an $S$-arithmetic ring in positive characteristic with $|S| = 3$. Hence \cite[Theorem 1.3]{YuriSoluble} implies that the Borel subgroup $\left( \begin{smallmatrix} * & * \\ 0 & * \end{smallmatrix} \right) \leq \SL_2(R_f)$ has finiteness length $2$, and is thus finitely presented. Therefore finite presentability of Abels' group $G = \mbA_4(R_f)$ is a consequence of \cite[Theorem~1.2(3)]{YuriSoluble}.

\Cref{thm:MOW} thus applies, and the collection of quotients $\{G/E_J\}_{J \subseteq \N}$ contains uncountably many pairwise non-quasi-isometric groups. But back in \Cref{thm:AfR} in \Cref{sec:notpenultimate} we have verified that all of these groups have property~$R_\infty$. 
{Indeed, it is easy to check that $Z(G/Z(G))=1$,  hence $Z(G/E_J)= Z(G)/E_J$ and so $(G/E_J)/Z(G/E_J)\cong G/Z(G)$. So we can see that the metabelian-by-abelian quotient 
\[G/Z(G) = \left( \frac{\mbU_4(R_f)}{Z(\mbA_4(R_f))} \right) \rtimes (\mc{D}_2(R_f) \times \mc{D}_3(R_f)) = \barra{\barra{\mbA_4(R_f)}}\]
is actually a characteristic quotient of each group $G/E_J$ by modding out $Z(G/E_J)$. Since $G/Z(G)$ has property~$R_\infty$ by \Cref{thm:AfR}, then so does each group $G/E_J$, which finishes off the proof.}
\end{proofof}

\subsection{Quasi-isometric diversity in the non-amenable case} \label{sec:qidNonamenable}

Now we deduce \Cref{diversemainthm}\eqref{diversemainthm1}. Since the focus of our paper was on soluble matrix groups, and because 
the proof of \Cref{diversemainthm}\eqref{diversemainthm1} relies on several external ingredients, 
this section is unavoidably not fully self‑contained. 

Much like the case of Part~(ii), we need an important observation about quasi-isometric diversity. This time due to Kropholler--Leary--Soroko \cite{KLS}, who noted that Leary's remarkable constructions \cite{IanFP} produce uncountably many quasi-isometry classes.

\begin{thm}[{\cite[Corollary~1.3]{KLS}}] \label{thm:KLS}
Given $n \geq 4$ there exists an uncountable set $\Lambda$ and $n$-dimensional Poincar\'e duality groups $\{G_\lambda\}_{\lambda \in \Lambda}$ that are non-finitely presented, of type $\FPn{}$, and pairwise non-quasi-isometric. 
\end{thm}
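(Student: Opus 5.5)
The statement is \Cref{thm:KLS}, attributed to Kropholler--Leary--Soroko \cite[Corollary~1.3]{KLS}. My plan is to reconstruct it from Leary's construction in \cite[Section~18]{IanFP}, combined with a quasi-isometry invariant that can distinguish uncountably many members of the family.

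\textbf{Step 1: the family.} Fix a flag triangulation $L$ of a suitable closed $(n-1)$-manifold, or a homology $(n-1)$-sphere that is not simply connected, such that the associated Bestvina--Brady type data behaves well. For every subset $S \subseteq \Z$, Leary builds a group $G_L(S)$ as the fundamental group of a branched cover of the cubical complex $X_L$ for the right-angled Artin group $A_L$. Its relevant properties are:
\begin{itemize}
\item it is torsion-free and of type $\FPn{}$ with $\mathrm{cd}=n$, since the branched cover is acyclic with cocompact free action;
\item it is an $n$-dimensional Poincaré duality group when $L$ is a homology sphere of dimension $n-1$ (this is where $n\ge4$ enters, because a homology $(n-1)$-sphere with nontrivial $\pi_1$ requires $n-1\ge3$);
\item it is not finitely presented, because $\pi_1$ of the level set is not finitely generated, which follows from $\pi_1(L)\neq1$.
\end{itemize}
The $\FPn{}$ and duality properties come from Leary's computation that the homology of the branched cover agrees with that of the Bestvina--Brady level set up to the branching locus indexed by $S$, together with Brown's criterion applied to a cocompact free action on an acyclic complex. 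I would cite these properties rather than reprove them.

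\textbf{Step 2: uncountably many classes.} Since there are uncountably many $S$ but only countably many finitely generated groups up to isomorphism in any countable family, the goal is a quasi-isometry invariant that recovers $S$ up to a countable ambiguity. The natural choice is the set of \emph{coarse} singular data. The group $G_L(S)$ contains, for each $s\in S$, subgroups or branch points whose local structure (the ``link'' of the branching, an $s$-fold cover) can be detected coarsely. Kropholler--Leary--Soroko's approach is to show that the set $S$, up to finite changes or a similar countable equivalence, is a quasi-isometry invariant. Two pieces of data are detectable in the large-scale geometry: the filling behaviour of loops, given by which elements of $\pi_1(L)$ die, and the asymptotic count of branching degrees. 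If $G_L(S)$ and $G_L(S')$ are quasi-isometric, these must agree, which forces $S$ and $S'$ to lie in the same countable class. Uncountably many classes then follow by cardinality.

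\textbf{Main obstacle.} The hard part is Step 2: extracting a genuine quasi-isometry invariant from the set $S$. My first attempt would use coarse versions of $\pi_1$ at infinity, or the coarse filling of specific loops in the Cayley complex, to detect whether $s\in S$. This fails to be a finitary invariant because the groups are not finitely presented. It is exactly this failure that makes the invariant rich enough to encode $S$.

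Because \Cref{thm:KLS} is quoted verbatim from \cite{KLS}, in the paper I would simply invoke it, and reserve effort for the subsequent use. That use is to embed these groups into infinitely-ended, accessible groups, for instance $G_\lambda * \Z$ or $G_\lambda * G_\lambda$. Such groups remain of type $\FPn{}$ and pairwise non-quasi-isometric, by Papasoglu--Whyte or by a Li--Sánchez Saldaña type argument, and they have $R_\infty$ by \cite{IvesonMartinoSgobbiWong2025}.
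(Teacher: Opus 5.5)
The paper gives no proof of this statement: it is quoted verbatim from \cite[Corollary~1.3]{KLS}, which rests on Leary's constructions in \cite{IanFP}. Your closing decision to simply invoke it is therefore exactly what the paper does. The clause ``non-finitely presented'' also costs nothing extra: there are only countably many finitely presented groups up to isomorphism, so discarding them from an uncountable, pairwise non-quasi-isometric family leaves an uncountable one. The reconstruction in your Steps~1--2, however, does not prove the statement, and it contains concrete errors.

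In Step~1 the input is wrong. Leary's groups $G_L(S)$ are of type $\FPn{}$ because $L$ is \emph{acyclic}, and no closed $(n-1)$-manifold is acyclic; for a homology $(n-1)$-sphere, $\tilde H_{n-1}(L)\cong\Z$. Already for $S=\Z$, Leary's group is the Bestvina--Brady group $BB_L$. The Bestvina--Brady theorem shows that $BB_L$ is of type $\FPn{}$ only if $L$ is acyclic, and for a homology $(n-1)$-sphere it is of type $\FPn{n-1}$ but not $\FPn{n}$. So neither type $\FPn{}$ nor duality can come out of this choice.

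Moreover, a free cocompact action on an acyclic complex gives type $\FPn{}$ and a bound on $\mathrm{cd}$, but not duality. For duality the acyclic space $Y$ must be an $n$-manifold, so that $H^i(G;\Z G)\cong H^i_c(Y)\cong H_{n-i}(Y)$. This is how the duality groups of \cite[Section~18]{IanFP} arise: they are the covering groups of uncountably many acyclic regular coverings of a single closed aspherical $n$-manifold.

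Your description of the branching is also inaccurate. $S$ does not record branching degrees. It records the heights of the vertices that are \emph{not} branched, i.e.\ the exponents $m$ for which the relators $a_1^m\cdots a_l^m$ along directed cycles of $L$ are imposed.

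Step~2 is where all of the content of \cite{KLS} lies, and your sketch supplies no invariant. Nothing is defined and nothing is shown to be preserved under quasi-isometries, as your ``main obstacle'' paragraph concedes. Two ingredients are missing:
\begin{enumerate}
\item a quasi-isometry invariant of Cayley graphs that recovers $S$ up to an equivalence coarse enough to absorb the multiplicative distortion of a quasi-isometry (roughly, the scales at which loops can be filled only by relators of comparable length, in the spirit of Bowditch's taut-loop invariant);
\item an uncountable family of pairwise inequivalent sets $S$, for instance sufficiently sparse ones.
\end{enumerate}
Without these, the proposal does not yield uncountably many quasi-isometry classes. Because of Step~1, it does not yield the $\FPn{}$ and duality properties either.
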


\begin{proofof}{\Cref{diversemainthm}\eqref{diversemainthm1}}
Let $\{G_\lambda\}_{\lambda \in \Lambda}$ be the uncountably many pairwise non-quasi-isometric groups as in \Cref{thm:KLS}. Since these are Poincar\'e duality groups in dimension $n \geq 4$, one has $H^1(G_\lambda; \Z[G_\lambda]) = 0$; cf. \cite[Item~(c), p.~139]{Bieri}. By Specker's formula for the number of ends (see, for example, \cite[Theorem~1]{DunwoodyEnds}), each $G_\lambda$ has $1+\dim_{\F_2}H^1(G_\lambda; \F_2[G_\lambda])$ ends. But as $H^1(G_\lambda; \F_2[G_\lambda]) \cong H^1(G_\lambda; \Z[G_\lambda]) \otimes_{\Z} \F_2$ (see, e.g., \cite[Corollary~3.7]{Swan}), 
we conclude that such groups $G_\lambda$ are one-ended. 

Now form the free product $\Gamma_\lambda = \Z^2 \ast G_\lambda$. We argue that $\Gamma_{\lambda}$ and $\Gamma_{\mu}$ are quasi-isometric if and only if $\lambda = \mu$. Indeed, if $\lambda = \mu$ then the groups are actually equal. Conversely, if $\Gamma_\lambda$ and $\Gamma_\mu$ are quasi-isometric, then a well-known result of Papasoglu--Whyte \cite[Theorem~3.1]{PapasogluWhyte} informs us that the quasi-isometry classes of the factors $\Z^2$ and $G_\lambda$ of the free product $\Gamma_\lambda = \Z^2 \ast G_\lambda$ have to match the quasi-isometry classes of the factors $\Z^2$ and $G_\mu$ of $\Gamma_\mu = \Z^2 \ast G_\mu$. But $\Z^2$ is not quasi-isometric to any $G_\nu$, $\nu \in \Lambda$, for multiple reasons --- for example, the groups $G_\nu$ are not finitely presented. 
Hence $G_\lambda$ has to be quasi-isometric to $G_\mu$, which by \Cref{thm:KLS} is the case only when $\lambda=\mu$. Therefore the groups $\Gamma_\lambda$ with $\lambda \in \Lambda$ are pairwise non-quasi-isometric. 

By \Cref{lem:FP}, the duality groups $G_\lambda$ from \Cref{thm:KLS} are of type $\FPn{\infty}$ and have $\mathrm{cd}(G_\lambda)<\infty$, and the same holds for $\Z^2$ (\Cref{ex:AbelianFinfty}). Due to Mayer--Vietoris, free products inherit all properties $\FPn{n}$ \cite[Proposition~2.13(a)]{Bieri} 
and their cohomological dimension is also finite in case its factors are finite-dimensional \cite[Proposition~6.1]{Bieri}. Thus, again invoking \Cref{lem:FP}, $\Gamma_\lambda = \Z^2 \ast G_\lambda$ is (finitely generated and) of type $\FPn{}$. 

It is now clear that $\Gamma_\lambda$ is accessible (\cite[Definition on p.~449]{Dunwoody}), as it is a free product of one-ended groups, and by Stallings' theorem on ends it is also obvious that $\Gamma_\lambda$ is infinitely-ended. 
The fact that each $\Gamma_\lambda$ has property~$R_\infty$ now follows: we may apply both the criterion of Iveson--Martino--Sgobbi--Wong \cite[Theorem~5.1.18]{IvesonMartinoSgobbiWong2025} or of Fournier-Facio \cite[Corollary~8.1.6]{IvesonMartinoSgobbiWong2025}.
\end{proofof}

\section{Optimality of hypotheses in Theorem~\ref{mainthm}} \label{sec:reallastsection}

As noted in the introduction, the bound $n \geq 4$ in \Cref{mainthm} is sharp and the  `no gaps shape' of $I \subseteq \{1,\ldots,n\}$ is optimal. 
Here we indicate which groups without property~$R_\infty$ demonstrate this. 

\subsection{Some groups in case $n=3$}

Recall from \Cref{pps:SIfinitelygenerated} that if $R$ is an integral domain whose additive group $(R,+)$ is finitely generated, then all $\mbS_n(R)$ are finitely generated.

\begin{lem}\label{lem:dim3char0trivialunits}
    Let $R$ be an integral domain of characteristic zero such that $(R,+)$ is finitely generated. If the units of $R$ are $U(R) =\{-1,1\}$, then $\mb{S}_3^{\{2\}}(R)$ does not have property $R_\infty$. 
\end{lem}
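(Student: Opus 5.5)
The plan is to exhibit one explicit automorphism $\varphi$ of $G=\mb{S}_3^{\{2\}}(R)$ with $R(\varphi)<\infty$, using \Cref{lem:heath} together with \Cref{inverse-Jabara}. Since $U(R)=\{\pm 1\}$, we have $G=\mbU_3(R)\rtimes\langle d_2(-1)\rangle$. By \eqref{conjdiag}, $d_2(-1)$ acts as $-1$ on $e_{1,2}(R)$ and on $e_{2,3}(R)$, and trivially on the centre $Z=e_{1,3}(R)$. Put $N=\mbU_3(R)$. Since $(R,+)\cong\Z^k$ and $\carac R=0$, the group $N$ is a finitely generated torsion-free nilpotent (Heisenberg-type) group. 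We have $N/Z\cong R^2$ and $Z\cong R$, and the commutator induces the alternating $R$-bilinear form $\omega((r_1,s_1),(r_2,s_2))=r_1s_2-s_1r_2$.

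\textbf{Construction.} Take $M=\left(\begin{smallmatrix}1&1\\1&0\end{smallmatrix}\right)\in\GL_2(\Z)$, acting $R$-linearly on $N/Z=R e_{1,2}\oplus R e_{2,3}$. Then $\omega(Mx,My)=\det(M)\,\omega(x,y)=-\omega(x,y)$. I would define $\varphi$ on $N$ by
\[e_{1,2}(r)\mapsto e_{1,2}(r)e_{2,3}(r),\qquad e_{2,3}(s)\mapsto e_{1,2}(s),\qquad e_{1,3}(t)\mapsto e_{1,3}(-t),\]
possibly corrected by suitable central terms such as $e_{1,3}(\pm r^2/2)$-type adjustments if needed. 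I would then check the defining relations \eqref{rel:commutatorsU}; this is the standard fact that similitudes of $\omega$ lift to automorphisms of the Heisenberg group. Next set $\varphi(d_2(-1))=d_2(-1)$. This is compatible because $d_2(-1)$ acts as $-\mathrm{id}$ on $N/Z$, which commutes with $M$, and trivially on $Z$. The map $\varphi$ is bijective, since $M$ is invertible over $\Z$ and $-1$ is invertible on $Z$.

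\textbf{Finiteness.} $N$ is $\varphi$-invariant and $G/N\cong C_2$ is finite, so $R(\barra\varphi)<\infty$ automatically. By \Cref{lem:heath}, it then suffices to show that $R(\iota_g\circ\varphi|_N)<\infty$ for every $g\in G$. By \Cref{inverse-Jabara}, this amounts to showing $\Fix(\iota_g\circ\varphi|_N)=\{1\}$. Since $Z$ is characteristic, I would argue in layers: if the induced maps on $N/Z$ and on $Z$ have no nontrivial fixed points, then neither does the map on $N$. Indeed, a fixed element maps to a fixed point of $N/Z$, hence lies in $Z$, hence is trivial.

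For $g\in N$, the maps induced by $\iota_g\circ\varphi$ on $N/Z$ and $Z$ are $M\otimes\mathrm{id}_R$ and $-1$. For $g\in N d_2(-1)$, they are $-M\otimes \mathrm{id}_R$ and $-1$. The map $-1$ on $Z\cong\Z^k$ has no nonzero fixed points. The eigenvalues of $\pm M$ are $\pm(1\pm\sqrt5)/2$, none equal to $1$. So $\pm M\otimes\mathrm{id}_R$, which on $R^2\cong\Z^{2k}$ is conjugate to a block sum of copies of $\pm M$, has no nonzero fixed vector.

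\textbf{Main obstacle.} The main obstacle is verifying that the prescribed assignment really extends to a group automorphism of $N$, with the correct central correction. I would handle it by writing elements as triples $(a,b,c)$ with multiplication $(a,b,c)(a',b',c')=(a+a',b+b',c+c'+ab')$. I would then look for $\varphi(a,b,c)=(a+b,\,a,\,-c+q(a,b))$ with $q$ a quadratic form over $R$, and solve for $q$, trying $q(a,b)=ab+\tfrac{a^2-a}{2}$-style terms or a rescaled variant. If integrality fails, I would instead replace $M$ by another $\det=-1$ matrix with no eigenvalue $\pm1$, such as $\left(\begin{smallmatrix}2&1\\1&0\end{smallmatrix}\right)$, chosen so that the needed $q$ is integral.
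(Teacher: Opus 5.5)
Your overall strategy is the paper's. You write down one explicit automorphism, apply \Cref{lem:heath} with $N=\mbU_3(R)$ and $G/N\cong C_2$, and show $\Fix(\iota_g\circ\varphi|_N)=1$ through the induced maps on $N/Z$ and $Z$ together with \Cref{inverse-Jabara}. Your fallback matrix $\left(\begin{smallmatrix}2&1\\1&0\end{smallmatrix}\right)$ with $q(a,b)=a^2+ab$ is exactly the paper's automorphism, which sends the matrix with entries $x,u,y,z$ to the one with entries $2x+uy,\,u,\,ux,\,x^2+uxy-z$. Your layered fixed-point argument, which treats all $g$ at once, is a fine substitute for the paper's reduction to $g=d_2(\pm1)$ via \Cref{lem:ignoreinner}.

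There is, however, a genuine gap in the construction step. You claim that setting $\varphi(d_2(-1))=d_2(-1)$ is compatible `because $d_2(-1)$ acts as $-\mathrm{id}$ on $N/Z$, which commutes with $M$, and trivially on $Z$'. Agreement on the graded pieces does not give agreement on $N$. With $\varphi(a,b,c)=(Mx,\,q(x)-c)$ and $\iota_{d_2(-1)}(a,b,c)=(-a,-b,c)$, the identity $\varphi\circ\iota_{d_2(-1)}=\iota_{d_2(-1)}\circ\varphi$ on $N$ holds if and only if $q(-x)=q(x)$.

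For your primary $M=\left(\begin{smallmatrix}1&1\\1&0\end{smallmatrix}\right)$, the required polarisation forces $q(x)+q(-x)=a^2+2ab$. An even $R$-valued $q$ would then need $a^2\in 2R$ for all $a$, hence $1\in 2R$, which is impossible since $2\notin U(R)$. Concretely, over $\Z$ with $q=ab+\binom{a}{2}$ you get $\varphi(\iota_{d_2(-1)}(1,0,0))=(-1,-1,1)$ but $\iota_{d_2(-1)}(\varphi(1,0,0))=(-1,-1,0)$, so your map is not a homomorphism.

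Over $\Z$ one can repair this with $\varphi(d_2(-1))=e_{1,2}(-1)d_2(-1)$, but not in general. If you allow $\varphi(d_2(-1))=n_0d_2(-1)$ with $n_0=(a_0,b_0,c_0)$, compatibility reads $q(-x)-q(x)=-a_0a+(a+b)b_0$. Reducing mod $2R$ forces $b_0\in 2R$ and $a^2\equiv a_0a \pmod{2R}$ for all $a\in R$. This fails for $R=\Z[\sqrt{-2}]$, which has units $\pm1$ and $(R,+)\cong\Z^2$: $a=1$ forces $a_0\equiv 1$, and then $a=\sqrt{-2}$ would give $\sqrt{-2}\in 2R$. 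So over this ring no automorphism of $G$ induces $M\otimes\mathrm{id}_R$ on $N/Z$ at all.

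The right test for choosing the matrix is therefore not integrality of $q$ but its evenness. Commit to $\left(\begin{smallmatrix}2&1\\1&0\end{smallmatrix}\right)$, whose $q(a,b)=a^2+ab$ is both integral and even. With that choice, $\varphi(d_2(-1))=d_2(-1)$ really is compatible. Your finiteness argument then goes through unchanged: the eigenvalues on $N/Z$ are $1\pm\sqrt2$ and $-1\mp\sqrt2$, none equal to $1$, and the map on $Z$ is $-1$.
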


\begin{proof}
    For simplicity, we write $\mb{S}_3^{\{2\}}(R)=\mbA_3(R)$. 
    One can check that 
    \[\varphi\begin{pmatrix}
        1 & x & z \\ 0 & u & y \\ 0 & 0 &1
    \end{pmatrix}= \begin{pmatrix}
        1 & 2x+uy & x^2+uxy-z \\ 0 & u & ux \\ 0 & 0 & 1
    \end{pmatrix}\]
    is a group automorphism. Let us show that $R(\varphi)<\infty$. 
    
    In fact, there exists a short exact sequence
    \[1 \to \mbU_3(R) \into  \mbA_3(R) \onto \mathcal{D}_2(R)  \to 1,\]
    and, since $\mbU_3(R)$ is a characteristic subgroup of $\mbA_3(R)$, we can consider the induced automorphisms $\varphi' \in \Aut(\mbU_3(R))$ and $\overline{\varphi} \in \Aut(\mathcal{D}_2(R))$. To show that $R(\varphi)<\infty$, it suffices to show that $R(\overline{\varphi})<\infty$ and $R(\iota_m \circ \varphi')<\infty$ for all inner automorphisms $\iota_m$ with $m \in \mbA_3(R)$ (\Cref{lem:heath}). 
    
    It is clear that $R(\overline{\varphi})<\infty$ because $\mathcal{D}_2(R)$ is isomorphic to $U(R) \cong \Z/2\Z$.
    Also, we know that every element $ m \in \mbA_3(R)$ can be written as $m=\mathbf{u}d$ with $\mathbf{u} \in \mbU_3(R)$ and $d \in \mathcal{D}_2(R)$. Thus, $\iota_m = \iota_{\mathbf{u}} \circ \iota_d$ and consequently 
    \[R(\iota_m \circ \varphi') = R(\iota_{\mathbf{u}} \circ (\iota_d \circ \varphi'))=R(\iota_d \circ \varphi'),\]
    by~\Cref{lem:ignoreinner}. For simplicity, denote $\psi_d= \iota_d \circ \varphi'$. Thus, it suffices to check that $R(\psi_d)< \infty$ for $d=d_2(1)$ and $d=d_2(-1)$. 

    We have the following explicit expressions for $\psi_{d_2(1)}$ and $\psi_{d_2(-1)}$:
 \[\psi_{d_2(1)}\begin{pmatrix}
        1 & x & z \\ 0 & 1 & y \\ 0 & 0 &1
    \end{pmatrix}= \begin{pmatrix}
        1 & 2x+y & x^2+xy-z \\ 0 & 1 & x \\ 0 & 0 & 1
    \end{pmatrix} \mbox{ and}\]
 \[\psi_{d_2(-1)}\begin{pmatrix}
        1 & x & z \\ 0 & 1 & y \\ 0 & 0 &1
    \end{pmatrix}= \begin{pmatrix}
        1 & -2x-y & x^2+xy-z \\ 0 & 1 & -x \\ 0 & 0 & 1
    \end{pmatrix}. \]
It is clear that $\Fix(\psi_{d_2(1)}) = \Fix(\psi_{d_2(-1)})=1$, from which we may conclude that both $R(\psi_{d_2(1)})< \infty$ and $R(\psi_{d_2(-1)})<\infty$ by Lemma~\ref{inverse-Jabara}, since $\mbU_3(R)$ is a finitely generated and torsion free nilpotent group.
\end{proof}

\begin{pps}\label{pps:dim3char0}
    Let $R$ be an integral domain of characteristic zero such that $(R,+)$ is finitely generated. Then $\mb{S}_3^{\{2\}}(R)$ does not have property $R_\infty$. 
\end{pps}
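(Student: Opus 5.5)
The plan is to extend the idea of \Cref{lem:dim3char0trivialunits}: construct one explicit automorphism $\Theta$ of $\mbA_3(R)=\mb{S}_3^{\{2\}}(R)$ with $R(\Theta)<\infty$, and prove finiteness with \Cref{lem:heath}. We use the short exact sequence $\mbU_3(R)\into\mbA_3(R)\onto\mathcal{D}_2(R)\cong U(R)$, where $\mbU_3(R)$ is characteristic by \Cref{pps:char}.

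The new difficulty is that $U(R)$ may now be infinite. Since $(R,+)\cong\Z^m$, $R$ is an order in a number field, so $U(R)$ is a finitely generated abelian group. If $\Theta$ induced the identity on $\mathcal{D}_2(R)$, as the automorphism $\varphi$ of \Cref{lem:dim3char0trivialunits} does, then the induced Reidemeister number would be $|U(R)|$, which can be infinite. I would therefore make $\Theta$ induce inversion $u\mapsto u^{-1}$ on $\mathcal{D}_2(R)$. For a finitely generated abelian group $A$, inversion has Reidemeister number $|A/A^2|<\infty$, which is finite here. To obtain inversion, let $\theta$ be the anti-diagonal ``flip'' $g\mapsto J\,(g^{-1})^{T}J$, with $J$ the anti-diagonal permutation matrix. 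This map preserves $\mbA_3(R)$, sends $\mathrm{diag}(1,u,1)$ to $\mathrm{diag}(1,u^{-1},1)$, and on $\mbU_3(R)$ it is essentially $\tau$ from \eqref{tau}. I then take $\Theta=\varphi\circ\theta$, where $\varphi$ is given by the formula in the proof of \Cref{lem:dim3char0trivialunits}. The first step is to check that this formula defines an automorphism of $\mbA_3(R)$ for arbitrary $u\in U(R)$, by verifying multiplicativity directly. Since $\varphi$ fixes the diagonal entry $u$, $\Theta$ still induces inversion on $\mathcal{D}_2(R)$.

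Next, \Cref{lem:heath} together with \Cref{lem:ignoreinner} reduces the problem to showing $R(\iota_d\circ\Theta')<\infty$ for every $d=d_2(v)$ with $v\in U(R)$, where $\Theta'$ is the restriction of $\Theta$ to $\mbU_3(R)$. There are now infinitely many such $d$, not just two. Since $\mbU_3(R)$ is finitely generated (as $(R,+)$ is), torsion-free and nilpotent, \Cref{inverse-Jabara} turns this into showing $\Fix(\iota_d\circ\Theta')=\{1\}$. A fixed element maps to a fixed element of $\mbU_3(R)/Z(\mbU_3(R))\cong R^2$. So it suffices to prove two things: the induced $R$-semilinear map on $R^2$ (in fact $R$-linear, with unit entries depending on $v$) has no non-zero fixed vector, and the induced map on the centre $\{e_{1,3}(z)\}\cong R$, which is $z\mapsto c_v z$ for some unit $c_v$, satisfies $c_v\neq 1$.

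The main obstacle is this uniform fixed-point computation over all $v\in U(R)$. I expect conjugating by $d_2(v)$ to act on $R^2$ by a diagonal change of basis, $(x,y)\mapsto(v^{-1}x,vy)$. If so, the matrix of the induced map on $R^2$ has the shape $\left(\begin{smallmatrix}2&\ast\\ \ast&0\end{smallmatrix}\right)$, with off-diagonal entries whose product is independent of $v$ (equal to $\pm1$). Its characteristic polynomial, for instance $\lambda^2-2\lambda-1$, is then independent of $v$ and does not vanish at $\lambda=1$, since $1-2-1=-2\neq0$ in characteristic zero. Hence $M-I$ is injective on $R^2$ because $R$ is a domain of characteristic zero. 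On the centre, $c_v$ is the product of the determinant of $M$ and the scalar by which $\iota_{d_2(v)}$ acts on $e_{1,3}$. That scalar is $1$, because $d_2(v)$ acts trivially on the centre. So $c_v$ equals this determinant, expected to be $-1\neq1$. The delicate part is tracking the signs and unit factors introduced by $\theta$ and by $\iota_{d_2(v)}$. If the composite $\varphi\circ\theta$ happens to produce an eigenvalue $1$, I would instead try $\theta\circ\varphi$, or replace $\varphi$ by a variant with $2x$ changed to $-2x$, and redo the same determinant check.
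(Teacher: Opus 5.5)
\textbf{There is a genuine gap.} The automorphism $\Theta=\varphi\circ\theta$ that your argument rests on does not exist once $U(R)\neq\{\pm1\}$. Where it does exist, it has infinitely many fixed points.

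\emph{$\varphi$ is not an automorphism.} The formula of \Cref{lem:dim3char0trivialunits} is multiplicative only for units with $u^2=1$. Write elements as $(x,u,y,z)$, with product $(x,u,y,z)(x',u',y',z')=(x'+xu',\,uu',\,uy'+y,\,z+z'+xy')$. The $(1,2)$-entry of $\varphi(gg')$ is $2x'+2xu'+uu'y+u^2u'y'$, whereas that of $\varphi(g)\varphi(g')$ is $2x'+2xu'+uu'y+u'y'$.

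No reordering or sign change rescues this. The element $d_2(u)$ acts on $\mbU_3(R)/Z(\mbU_3(R))\cong R^2$ by $(x,y)\mapsto(u^{-1}x,uy)$. So if $\Theta$ inverts $\mathcal{D}_2(R)$, the additive map $(x,y)\mapsto(A(x)+B(y),C(x)+D(y))$ induced by $\Theta'$ must satisfy $A(u^{-1}x)=uA(x)$ for all units $u$. If $A$ is multiplication by a non-zero scalar such as $\pm2$, this forces $u^2=1$. Hence the shape $\left(\begin{smallmatrix}2&\ast\\ \ast&0\end{smallmatrix}\right)$ you expect cannot occur, and with scalar coefficients the induced map must be anti-diagonal.

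\emph{The determinant bookkeeping is also wrong.} The scalar by which the map acts on $Z(\mbU_3(R))=\{e_{1,3}(z)\}$ is the determinant of the induced map on $R^2$. This determinant is multiplicative, and $\theta$ and $\varphi$ both have determinant $-1$. So $\varphi\circ\theta$ fixes the centre pointwise; so do $\theta\circ\varphi$ and any variant whose linear part has shape $\left(\begin{smallmatrix}\pm2&\pm1\\ \pm1&0\end{smallmatrix}\right)$. For example, when $U(R)=\{\pm1\}$ one gets $\Theta(x,1,y,z)=(-x-2y,1,-y,y^2+z)$. Then $\Fix(\Theta)$ is infinite. Since $\mbA_3(R)$ is finitely generated and linear, hence residually finite, Jabara's trick gives $R(\Theta)=\infty$.

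\textbf{The missing idea is the right second factor.} Keep the flip, but compose it with conjugation by a diagonal matrix $c=\mathrm{diag}(1,1,w)\in\mbB_3(R)$ with $w\in U(R)$. This normalises $\mbA_3(R)$ and commutes with $\mathcal{D}_2(R)$, so $\Theta=\iota_c\circ\theta$ still inverts $\mathcal{D}_2(R)$. Now $\iota_{d_2(v)}\circ\Theta'$ sends $(x,1,y,z)$ to $(-v^{-1}y,\,1,\,-vw^{-1}x,\,w^{-1}(xy-z))$. A fixed point satisfies $(1-w^{-1})x=0$, hence $x=y=0$, and then $(1+w^{-1})z=0$. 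So $\Fix(\iota_{d_2(v)}\circ\Theta')=\{1\}$ holds uniformly in $v$ exactly when $w\notin\{\pm1\}$.

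Such a $w$ exists if and only if $U(R)\neq\{\pm1\}$, so a case split is unavoidable. When $U(R)=\{\pm1\}$, use $\varphi$ alone as in \Cref{lem:dim3char0trivialunits}; there $\mathcal{D}_2(R)$ is finite.

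This is exactly the paper's proof: its automorphism is $\iota_c\circ\theta$ with $w=-v_0^{-1}$ for a fixed unit $v_0\neq\pm1$. Your choice of inversion on $\mathcal{D}_2(R)$, and your reduction via \Cref{lem:heath}, \Cref{lem:ignoreinner} and \Cref{inverse-Jabara}, match what the paper does.
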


\begin{proof}
    Due to the previous lemma, we can consider the case where $U(R) \neq \{ 1, -1\}$ and so we can fix a unit $v\in U(R) \setminus \{1,-1\}$.
    Having fixed this unit we consider the automorphism $\varphi\in\Aut(\mb{S}_3^{\{2\}}(R))$ which is given by 
    \[\varphi\begin{pmatrix}
         1 & x & z \\ 0 & u & y \\ 0 & 0 &1
    \end{pmatrix}= \begin{pmatrix}
            1 & -\frac{y}{u} & -\frac{ v x y }{u} + vz  \\ 0 & \frac{1}{u} & \frac{ vx }{u}\\ 0 & 0 &1
    \end{pmatrix}.\]
 As in the previous proposition, we consider the short exact sequence
    \[1 \to \mbU_3(R) \into  \mbA_3(R) \onto \mathcal{D}_2(R)  \to 1,\]
    where $\mbU_3(R)$ is a characteristic subgroup of $\mb{S}_3^{\{2\}}(R)$. We can consider the induced automorphisms $\varphi' \in \Aut(\mbU_3(R))$ and $\overline{\varphi} \in \Aut(\mathcal{D}_2(R))$. In order to show that $R(\varphi)<\infty$, it again suffices to show that $R(\overline{\varphi})<\infty$ and $R(\iota_{d(u)} \circ \varphi')<\infty$ for all inner automorphisms $\iota_{d(u)} $ with 
    \[ d(u) = \begin{pmatrix}
         1 & 0 & 0 \\ 0 & u & 0 \\ 0 & 0 &1
    \end{pmatrix}.\]
The automorphism $\bar\varphi:\mathcal{D}_2(R)\cong U(R) \to \mathcal{D}_2(R)\cong U(R)$ is sending each element to its inverse and hence $R(\varphi)=[U(R): U(R)^2]<\infty$ since $U(R),\cdot$ is a finitely generated abelian group. 

We have the following explicit description for $\iota_{d(u)} \circ \varphi'$:
\[ \iota_{d(u)} \circ \varphi' \begin{pmatrix}
        1 & x & z \\ 0 & 1 & y \\ 0 & 0 &1
    \end{pmatrix}= \begin{pmatrix}
            1 & -\frac{y}{u} & -v x y + vz  \\ 0 & 1& uvx\\ 0 & 0 &1
    \end{pmatrix}.\]
One easily checks that $\Fix( \iota_{d(u)} \circ \varphi') =1$, and so Lemma~\ref{inverse-Jabara} allows us to conclude that $R(\iota_{d(u)} \circ \varphi') <\infty$ for all $u\in U(R)$, which finishes the proof. 
\end{proof}

\subsection{Proof of Proposition~\ref{obs:mainthmisoptiomal}}

As mentioned in the introduction, \Cref{obs:mainthmisoptiomal} is essentially known, but we provide a proof for completeness.

\underline{Part~(i):} When $I = \varnothing$ we have that $\mb{S}_n^{I}(R) = \mbU_n(R)$ by design. Now choose $R = \mathcal{O}_{\K}$ to be a ring of integers of an algebraic number field $\K$ such that $U(\mathcal{O}_{\K})$ is infinite --- for instance, $\mathcal{O}_{\K} = \Z[\sqrt{2}]$. Regardless of $n \geq 2$, Nasybullov constructs in \cite[(Proof of) Proposition~8]{TimurUniTri} an automorphism $\phee$ of $\mbU_n(\mathcal{O}_{\K})$ that has $R(\phee) < \infty$. 

\underline{Part~(ii):} For $n=2$ we may pick $I = \{1\}$ or even $I =\{1,2\}$, both of which obviously fulfil the (NG)-condition~\eqref{conditionI}. Recall from \Cref{lem:isowithPBn} that $\mb{S}^{\{1\}}_2(R) \cong \PB_2(R)$. Now choosing $R = \F_q[t]$, it is already known by \cite[Proposition~6.1]{Bn2} that $\mbS_2(\F_q[t])$ has an automorphism whose Reidemeister number is finite and divisible by $q-1$. And in case $n=3$, it is a consequence of \Cref{pps:dim3char0} that Abels' groups $\mb{S}_3^{\{2\}}(\mathcal{O}_{\K}) = \mbA_3(\mathcal{O}_{\K})$ over rings of integers of algebraic number fields also do not have property~$R_\infty$. \qed

\medskip

Note that all groups in the proof above, with the exception of $\PB_2(\F_q[t])$ and $\mbB_2(\F_q[t])$, are finitely generated. At the moment we do not know whether the finitely generated groups $\PB_2(\F_q[t,t^{-1}])$ and $\mbB_2(\F_q[t,t^{-1}])$ have property~$R_\infty$. It is interesting to note that the (finitely presented) groups $\PB_2(\F_2[t,t^{-1},f(t)^{-1})])$ and $\mbB_2(\F_q[t,t^{-1},f(t)^{-1}])$, with $f(t) \in \F_2[t] \setminus \{t,t-1\}$ irreducible, actually do have~$R_\infty$; cf. \cite[Theorem~5.1]{Bn1}.

\section*{Acknowledgments}

This work was initiated and supported through the ``OWRF / Research in Pairs'' programme of the Mathematisches Forschungsinstitut Oberwolfach (MFO), Germany. PMLA and YSR gratefully thank the direction and all staff of the MFO for the support and hospitality. KD and PMLA were also partially supported by the Methusalem grant METH/21/03 -- long term structural funding of the Flemish Government. YSR was partially supported by the Deutsche
Forschungsgemeinschaft (DFG, German Research Foundation), 314838170, GRK 2297 MathCoRe. PMLA and YSR thank the KU Leuven, Campus Kulak Kortrijk, for their hospitality in June 2025. KD thanks the University of Lincoln for their hospitality in November 2025 and January 2026. YSR thanks the University of Southampton for their hospitality in June 2022, and particularly Ian J. Leary, Peter Kropholler, Kevin Li, Armando Martino, and Ashot Minasyan for stimulating mathematical discussions.

 \def\cprime{$'$} \def\cprime{$'$}
 \providecommand{\bysame}{\leavevmode\hbox to3em{\hrulefill}\thinspace}
 \providecommand{\MR}{\relax\ifhmode\unskip\space\fi MR }

\printbibliography

\end{document}